\newtheorem{theorem}{Theorem}[section]
\newtheorem{lemma}[theorem]{Lemma}
\newtheorem{proposition}[theorem]{Proposition}
\newtheorem{corollary}[theorem]{Corollary}
\newtheorem{definition}{Definition}[section]
\theoremstyle{remark}
\newtheorem{remark}[theorem]{Remark}
\theoremstyle{definition}
\numberwithin{equation}{section}
\newcommand{\R}{\ensuremath{\mathbb{R}}}
\newcommand{\N}{\ensuremath{\mathbb{N}}}
\newcommand{\Z}{\ensuremath{\mathbb{Z}}}
\newcommand{\J}{\ensuremath{\mathbb{J}}}
\newcommand{\vertiii}[1]{{\left\vert\kern-0.25ex\left\vert\kern-0.25ex\left\vert #1 
    \right\vert\kern-0.25ex\right\vert\kern-0.25ex\right\vert}}
\newcommand{\Levy}{\ensuremath{\mathcal{L}}}
\newcommand{\Operator}{\ensuremath{\mathfrak{L}^{\sigma,\mu}}}
\newcommand{\Levymu}{\ensuremath{\mathcal{L}^\mu}}
\newcommand{\Levynu}{\ensuremath{\mathcal{L}^\nu}}
\newcommand{\Lvar}{L_{\varphi}}
\newcommand{\Te}{\ensuremath{T^{\textup{exp}}}}
\newcommand{\Ti}{\ensuremath{T^{\textup{imp}}}}
\newcommand{\W}{\mathcal{W}}
\newcommand{\dd}{\,\mathrm{d}}
\newcommand{\dell}{\partial}
\newcommand{\indikator}{\mathbf{1}_{|z|\leq 1}}
\newcommand{\indik}{\mathbf{1}}
\newcommand{\e}{\text{e}}
\DeclareMathOperator*{\esssup}{ess \, sup}
\newcommand{\Ut}{\widetilde{U}}
\newcommand{\Vt}{\widetilde{V}}
\newcommand{\Grid}{\mathcal{G}_h}
\newcommand{\GridT}{\mathcal{T}_{\Delta t}^T}
\newcommand{\ol}{\overline}
\begin{document}


\title[Numerical analysis of equations of porous medium
  type]{Robust numerical methods for nonlocal (and local)
  equations of porous medium type.\\ Part I: Theory}


\author[F.~del~Teso]{F\'elix del Teso}
\address[F. del Teso]{Basque Center for Applied Mathematics (BCAM)\\
Bilbao, Spain} 
\email[]{fdelteso@bcamath.org}
\urladdr{http://www.bcamath.org/es/people/fdelteso}

\author[J.~Endal]{J\o rgen Endal}
\address[J. Endal]{Department of Mathematical Sciences\\
Norwegian University of Science and Technology (NTNU)\\
N-7491 Trondheim, Norway} 
\email[]{jorgen.endal\@@{}ntnu.no}
\urladdr{http://folk.ntnu.no/jorgeen}

\author[E.~R.~Jakobsen]{Espen R. Jakobsen}
\address[E. R. Jakobsen]{Department of Mathematical Sciences\\
Norwegian University of Science and Technology (NTNU)\\
N-7491 Trondheim, Norway} 
\email[]{espen.jakobsen\@@{}ntnu.no}
\urladdr{http://folk.ntnu.no/erj}

\keywords{Numerical methods, finite differences, monotone methods,
  robust methods, convergence, stability, a priori estimates,
  nonlinear degenerate diffusion, porous medium equation, fast
  diffusion equation, Stefan problem, fractional Laplacian, Laplacian,
  nonlocal operators, distributional solutions, existence} 

\subjclass[2010]{
65M06, 
65M12, 
35B30, 
35K15, 
35K65, 
35D30, 
35R09, 
35R11, 
76S05
}



\begin{abstract}
  \noindent We develop a unified and easy to use framework to study robust fully discrete numerical methods for nonlinear degenerate diffusion equations 
$$
\partial_t u-\mathfrak{L}^{\sigma,\mu}[\varphi(u)]=f  \quad\quad\text{in}\quad\quad \R^N\times(0,T),
$$ 
where $\mathfrak{L}^{\sigma,\mu}$ is a general symmetric diffusion operator of
L\'evy type and $\varphi$ is merely continuous and
non-decreasing. We then use this theory to prove
  convergence for many different numerical schemes. In the nonlocal
  case most of the results are completely new. Our theory covers strongly degenerate Stefan
problems,  the full range of porous medium equations, and for the
first time for nonlocal problems, also fast diffusion
equations. Examples of diffusion operators $\mathfrak{L}^{\sigma,\mu}$ 
are the (fractional) Laplacians $\Delta$ and
$-(-\Delta)^{\frac\alpha2}$ for $\alpha\in(0,2)$, discrete operators,
and combinations.   
The observation that monotone finite difference operators are nonlocal L\'evy
operators, allows us to give
a unified and compact {\em nonlocal} theory for both
local and nonlocal, linear and nonlinear diffusion equations.
The theory includes  stability, compactness, and convergence of the
methods under minimal assumptions -- including 
assumptions that lead to very irregular solutions. 
As a byproduct, we prove the new and general
existence result announced in \cite{DTEnJa17b}. We also
present some numerical tests, but extensive testing is
deferred to the companion paper \cite{DTEnJa18b} along with a more
detailed discussion of the numerical methods included in our theory.
\end{abstract}

%
%
%
%
%
%
%

\maketitle

\tableofcontents

\section{Introduction}
We develop a unified and easy to use framework for monotone schemes of
finite difference type for a large class of possibly degenerate,
nonlinear, and nonlocal diffusion equations of porous medium type. We
then use this 
theory to prove stability, compactness, and convergence for many
different robust schemes. In the nonlocal case most of the results
are completely new. The equation we study is
\begin{equation}\label{E}
\begin{cases}
\dell_tu-\Operator[\varphi(u)]=f \qquad\qquad&\text{in}\qquad Q_T:=\R^N\times(0,T),\\
u(x,0)=u_0(x) \qquad\qquad&\text{on}\qquad \R^N,
\end{cases}
\end{equation}
where $u$ is the solution, $\varphi$ is a merely
continuous and nondecreasing function, $f=f(x,t)$ some right-hand side, and $T>0$. The diffusion operator $\Operator$ is given~as 
\begin{equation}\label{defbothOp}
\Operator:=L^\sigma+\Levymu
\end{equation} 
with local and nonlocal (anomalous) parts, 
\begin{align}\label{deflocaldiffusion}
  L^\sigma[\psi](x)&:=\text{tr}\big(\sigma\sigma^TD^2\psi(x)\big),\\
\label{deflevy}
\Levy ^\mu [\psi](x)&:=\int_{\R^N\setminus\{0\} } \big(\psi(x+z)-\psi(x)-z\cdot D\psi(x)\indikator\big) \dd\mu(z),
\end{align}
where  $\psi \in C_\textup{c}^2$, $\sigma=(\sigma_1,....,\sigma_P)\in\R^{N\times P}$ for $P\in \N$
and $\sigma_i\in \R^N$, $D$ and $D^2$ are the gradient and Hessian,
$\indikator$ is a characteristic function, and $\mu$ is a nonnegative
symmetric Radon measure. 

The assumptions we impose on $\Operator$ and $\varphi$ are so mild
that many different problems can be written in the form
\eqref{E}. The assumptions on $\varphi$ allow strongly degenerate
Stefan type problems and the full range of porous
medium and fast diffusion equations to be covered by \eqref{E}. In the
first case e.g. $\varphi(u)=\max(0,au-b)$ for $a\geq0$ and $b\in\R$
and in the second $\varphi(u)=u|u|^{m-1}$ for any $m\geq0$.
Some physical phenomena that can be modelled by
\eqref{E} are flow in a porous medium (oil, gas,
groundwater), nonlinear heat transfer, phase transition in matter,
and population dynamics. For more 
information and examples, we refer to 
Chapters 2 and 21 in \cite{Vaz07} for local problems and to
\cite{Woy01,MeKl04,Caf12,Vaz12} for nonlocal problems. 

One important contribution of this paper is that we allow for a very
large class of diffusion operators  $\Operator$. This class coincides
with the generators of the {\em symmetric} L\'evy processes. Examples are
 Brownian motion, $\alpha$-stable, relativistic, CGMY, and compound Poisson 
 processes \cite{Ber96,Sch03,App09}, and the generators include
 the classical and fractional Laplacians $\Delta$ and 
 $-(-\Delta)^{\frac{\alpha}{2}}$, $\alpha\in(0,2)$ (where
 $\dd\mu(z)=c_{N,\alpha}\frac{\dd z}{|z|^{N+\alpha}}$),
 relativistic Schr\"odinger operators $m^\alpha 
 I-(m^2I-\Delta)^{\frac{\alpha}{2}}$, and surprisingly, also
 monotone numerical discretizations of $\Operator$. 
 Since $\sigma$ and
 $\mu$ may be degenerate or even identically zero, problem \eqref{E}
 can be purely local, purely nonlocal, or a combination. 
 
Nonstandard and novel ideas on numerical methods for
 \eqref{E} and their analysis
 are presented in this paper. We will strongly
 use the fact that our (large) class of diffusion
 operators contain many of its own monotone
 approximations. This important observation from \cite{DTEnJa17a} is used to
 interpret  discretizations of $\Operator$ as nonlocal
 L\'evy operators $\Levy^\nu$ which again opens the door
 for powerful PDE techniques and a unified analysis of our schemes.
We consider 
discretizations of $\Operator$ of the form
$$
\Levy^h[\psi](x)=\sum_{\beta\neq0}\left(\psi(x+z_{\beta})-\psi(x)\right)\omega_{\beta},
$$
or equivalently $\Levy^h=\Levy^{\nu}$ with $\nu:=\sum_{\beta\not=0}
(\delta_{z_\beta}+\delta_{z_{-\beta}})\omega_{\beta}$, where 
$\beta\in \Z^N$, the stencil points $z_{\beta}\in
\R^N\setminus \{0\}$, the weights $\omega_{\beta}\geq0$, and
$z_{-\beta}= -z_{\beta}$ and $\omega_{\beta}=\omega_{-\beta}$. These
discretizations are nonpositive in the sense that
$\Levy^h[\psi](x_0)\leq0$ for any maximum point $x_0$ of 
$\psi\in C_{\textup{c}}^\infty(\R^N)$, and as we will see, they include monotone
finite difference quadrature approximations of $\Operator$. 
Our numerical approximations of  \eqref{E} will then take the general form
\begin{equation*}
\begin{split}
U_\beta^j=U_\beta^{j-1}+\Delta t_j\big(\Levy_1^h[\varphi_1^h(U_{\cdot}^j)]_\beta+\Levy_2^h[\varphi_2^h(U_\cdot^{j-1})]_\beta+F^j_\beta\big)
\end{split}
\end{equation*}
where $U_\beta^j\approx u(x_\beta,t_j)$, $\Levy^h_i \approx
\Operator$, $\varphi^h_i\approx \varphi$, $F_\beta^j\approx
f(x_\beta,t_j)$  and $h$ and $\Delta t_j$ are the discretization parameters in space and time respectively.  By choosing $\varphi^h_1, \varphi^h_2,\Levy^h_1,
\Levy^h_2$ in certain ways, we can recover explicit, implicit,
$\theta$-methods, and various explicit-implicit methods. In a simple one
dimensional case,
\begin{align*}
  \dell_t u&=\varphi(u)_{xx}-(-\partial_x^2)^{\alpha/2}\varphi(u),
\end{align*}
an example of a discretization in our class is given by
\begin{align*}
U^j_m=U^{j-1}_m&+\frac{\Delta
  t}{h^2}\Big(\varphi(U^{j}_{m+1})-2\varphi(U^{j}_{m})+\varphi(U^{j}_{m-1})\Big)\\
&+\Delta
    t\sum_{k\neq0}\Big(\varphi(U^{j-1}_{m+k})-\varphi(U^{j-1}_{m})\Big)\int_{(k-\frac12)h}^{(k+\frac12)h}\frac{c_{1,\alpha}\dd z}{|z|^{1+\alpha}}.
\end{align*}
Our class of schemes include both well-known discretizations and
many discretizations that are new in context of \eqref{E}. These new
discretizations include 
higher order discretizations of the nonlocal operators,
explicit schemes 
for fast diffusions, and various explicit-implicit schemes.
See the discussion in Sections 
\ref{sec:mainresults} and \ref{sec:discNumExp} and especially the
companion paper \cite{DTEnJa18b} for more details.

One of the main contributions of this paper is to provide a uniform and
rigorous analysis of such numerical schemes in this very general
setting, a setting that covers local and nonlocal, linear and nonlinear,
non-degenerate and degenerate, and smooth and nonsmooth problems. This
novel analysis includes well-posedness, stability, equicontinuity,
compactness, and $L^p_{\textup{loc}}$-convergence results for the
schemes, results which are completely new in some local and most nonlocal cases.
Schemes that converge in such general circumstances are often said to be {\em
  robust}. 
Numerical schemes that are formally consistent are not robust in this generality, i.e. they need not always converge for problems with nonsmooth solutions or can even converge to false solutions. Such issues are seen especially in nonlinear, degenerate and/or
low regularity problems. Our 
general results are therefore only possible because we have (i) 
identified a class of schemes with good properties (including monotonicity)
and (ii) developed the necessary mathematical techniques for
this general setting.

A novelty of our analysis is that we are able to present the
theory in a uniform, compact, and natural way. By interpreting
discrete operators as nonlocal L\'evy operators, and the schemes as
holding in every point in space, we can use PDE type techniques for the
analysis. This is possible because in recent papers
\cite{DTEnJa17a,DTEnJa17b} we have developed  a well-posedness theory for problem
\eqref{E} which in particular allows for the general class of
diffusion operators needed here. Moreover, the well-posedness holds for
merely bounded distributional or very weak solutions. The fact that we
can use such a weak notion of solution will simplify the analysis and
make it possible to do a global theory for all the different problems
\eqref{E} and schemes that we consider here. At this point the reader should
note that if \eqref{E} has more regular (bounded) solutions (weak,
strong, mild, or classical), then our results still apply because
these solutions will coincide with the (unique) distributional
solution.

The effect of the L\'evy operator interpretation of the discrete
operators is that part of our analysis is turned in to a study of
semidiscrete in time approximations of \eqref{E} (cf. \eqref{defNumSch}).
A convergence result for these are then
obtained from a compactness argument:
We prove 
(i) uniform estimates in $L^1$ and $L^\infty$ and space/time
translation estimates in $L^1$/$L^1_{\textup{loc}}$, (ii) compactness
in $C([0,T];L^1_{\textup{loc}}(\R^N))$ via the Arzel\`a-Ascoli and
Kolmogorov-Riesz theorems, (iii) limits of convergent subsequences are
distributional solutions via stability results for \eqref{E}, and
finally (iv) full convergence of the numerical solutions by (ii),
(iii), and uniqueness for \eqref{E}.  The proofs of the various a
priori estimates are done from scratch using new, efficient, and nontrivial
approximation arguments for nonlinear nonlocal problems.

To complete our proofs, we also need to connect the
results for the 
semi-discrete scheme defined on the whole space with the fully discrete scheme
defined on a spatial grid. We observe here that this part is easy
for uniform grids where we prove an equivalence theorem under
natural assumptions on discrete operators: Piecewise constant
interpolants of solutions of the fully discrete scheme coincides with
solutions of the corresponding semi-discrete scheme with piecewise
constant initial data (see Proposition 
\ref{prop:equivalence}). Nonuniform grids is a very interesting case that
we leave for future work. 

The nonlocal approach presented in this paper gives a uniform way of
representing local, nonlocal and discrete problems, different schemes {\em and} equations; compact,
efficient, and easy to understand PDE type arguments that work for very
different problems and schemes; new convergence results for local and nonlocal
problems; and it is very natural since the difference quadrature approximations 
$\Levy^h$ are nonlocal operators of the form \eqref{deflevy}, even when
equation \eqref{E} is local.  

We also mention that a consequence of our convergence and compactness
theory is the existence of 
distributional solutions of the Cauchy problem \eqref{E}. 

\smallskip
\noindent {\bf Related work.}
In the  local linear case, when $\varphi(u)=u$ and $\mu\equiv0$ in
\eqref{E}, numerical methods and analysis can be found in
undergraduate text books. In the nonlinear case there is a very large
literature so we will focus only on some developments that are more
relevant to this paper. For porous medium nonlinearities
($\varphi(u)=u|u|^{m-1}$ with $m>1$), there are early results on
finite element and finite-difference interface tracking methods in
\cite{Ros83} and \cite{DBHo84} (see also \cite{Mon16}). There is
extensive theory for finite volume schemes, see \cite[Section
  4]{EyGaHe00} and references therein for equations with locally
Lipschitz $\varphi$. For finite element methods there is a number of
results, including results for fast diffusions ($m\in(0,1)$), Stefan
problems, convergence for strong and weak solutions, discontinuous
Galerkin methods, see
e.g. \cite{RuWa96, EbLi08,EmSi12, DuAlAn13,ZhWu09,NoVe88,MiSaSu05}. 
Note that the latter paper considers the general form of \eqref{E}
with $\Operator=\Delta$ and provides a convergence analysis in $L^1$
using nonlinear semi-group theory. A number of results on finite
difference methods for degenerate convection-diffusion equations also yield
results for \eqref{E} in special cases, see e.g. \cite{EvKa00,BuCoSe06,KaRiSt16,JePa17}. 
In particular the results of \cite{EvKa00,KaRiSt16} imply our
convergence results for a particular scheme when $\varphi$ is
locally Lipschitz,  $\Operator=\Delta$, and solutions have a certain
additional BV regularity. Finally, we mention very
general results on so-called gradient schemes
\cite{DrEyGaHe13,DrEyHe16,DL2019} for porous medium
equations or more general doubly or triply degenerate parabolic
equations.

In the nonlocal case, the literature is more recent and not so
extensive. For linear equations in the whole space, finite difference
methods have been studied in e.g. \cite{CoTa04,HuOb14,
  HuOb16,CiRoStToVa18}. An important but different line of research
concerns problems on bounded domains, see e.g. \cite{DEGu13, BoPa15,
  NoOtSa15, AcBo17, CuDTG-GPa18}. This direction  will not be
discussed further in this paper. 
Some early numerical results for nonlocal problems came for finite difference quadrature
schemes for Bellman equations and fractional conservation laws, see
\cite{JaKaLC08,CaJa09,BiJaKa10} and \cite{Dro10}. 
For the latter case discontinuous Galerkin and spectral methods were later studied in \cite{CiJaKa11,CiJa13,XuHe14}. 
The first results that include nonlinear nonlocal versions of \eqref{E} was
probably given in \cite{CiJa11}. Here convergence of finite
difference quadrature schemes was proven for a convection-diffusion
equation. This result is extended to more general equations and error
estimates in \cite{CiJa14} and a higher order discretization in
\cite{DrJa14}. In some cases our convergence results follow from these
results (for two particular schemes, $\sigma=0$, and $\varphi$ locally
Lipschitz). However, the analysis there is different and more 
complicated since it involves entropy solutions and Kru\v{z}kov doubling
of variables arguments.

In the purely parabolic case \eqref{E}, the behaviour of the solutions
and the underlying theory is different from the convection-diffusion
case (especially so in the nonlocal case, see e.g. \cite{DPQuRoVa11, DPQuRoVa12, Vaz14,DPQuRo16,DPQuRoVa17} and \cite{DrIm06, ChCzSi10, AlAn10,CiJa11,AlCiJa14,IgSt18}). It is therefore important to develop numerical methods and
analysis that are specific for this setting. The first numerical results for Fractional Porous Medium Equations seem to be  \cite{DTe14, DTVa14}  which are based on the extension method \cite{CaSi07}. The present paper is another step in this direction, possibly the first not to use the extension method in this setting.  

\smallskip
\noindent{\bf Outline.} The assumptions, numerical schemes, and main
results are given in Section 2. In Section 3 we provide many
concrete examples of schemes that satisfy the assumptions of Section
2. We also show some numerical results for a nonlocal Stefan
problem with non-smooth solutions. The proofs of the main results are
given in Section 4, while some auxiliary results are proven in our
final section, Section 5.

\smallskip

In the companion paper \cite{DTEnJa18b} there is a more complete
discussion of the family of numerical methods. It includes more
discretizations of the operator $\Operator$, more schemes, and many
numerical examples. There we also provide proofs and explanations for
why the different schemes satisfy the (technical) assumptions of this paper.


\section{Main results}
\label{sec:mainresults}

The main results of this paper are presented in this section. They
include the definition of the numerical schemes, their consistency,
monotonicity, stability, and convergence of numerical solutions towards 
distributional solutions of the porous medium type equation \eqref{E}.

\subsection{Assumptions and preliminaries}
\label{sec:assumptionsprelim}

The assumptions on \eqref{E} are
\begin{align}
&\varphi:\R\to\R\text{ is nondecreasing and continuous};
\tag{$\textup{A}_\varphi$}&
\label{phias}\\
&f\text{ is measurable and }\int_0^T\big(\|f(\cdot,t)\|_{L^1(\R^N)}+\|f(\cdot,t)\|_{L^\infty(\R^N)}\big)\dd t<\infty;
\tag{$\textup{A}_f$}&
\label{gas}\\
&u_0\in L^1(\R^N)\cap L^{\infty}(\mathbb{R}^N);\text{ and}
\tag{$\textup{A}_{u_0}$}&
\label{u_0as}\\
&\label{muas}\tag{$\textup{A}_{\mu}$} \mu \text{ is a nonnegative symmetric Radon measure on
}\R^N\setminus\{0\}
\text{ satisfying}
\nonumber\\ 
&\quad\int_{|z|\leq1}|z|^2\dd \mu(z)+\int_{|z|>1}1\dd
\mu(z)<\infty\nonumber.
\end{align}
Sometimes we will need stronger assumptions than \eqref{phias} and
\eqref{muas}:
\begin{align}
&\label{philipas}\tag{$\textup{Lip}_\varphi$} \varphi:\R\to\R\text{ is nondecreasing and locally Lipschitz; and}\\
&\label{nuas}\tag{$\textup{A}_{\nu}$} \nu \text{ is a nonnegative symmetric Radon measure satisfying } \nu(\R^N)<\infty. \nonumber
\end{align}

\begin{remark}\label{addingconstants}
\begin{enumerate}[{\rm (a)}]
\item
  Without loss of generality, we can assume $\varphi(0) = 0$
  (replace $\varphi(u)$ by $\varphi(u)-\varphi(0)$), and when
  \eqref{philipas} holds, that $\varphi$ is globally Lipschitz (since
  $u$ is bounded). In the latter case we let $L_\varphi$ denote the
  Lipschitz constant.
  \smallskip
\item Under assumption \eqref{muas}, for any $p\in[1,\infty]$ and any
  $\psi\in C_\textup{c}^\infty(\R^N)$,
\begin{equation}\label{eq:WPL}\|\Operator[\psi]\|_{L^p}\leq c\|D^2\psi\|_{L^p}
  \Big(|\sigma|^2+\int_{|z|\leq1}|z|^2\dd\mu(z)\Big)+ 2\|\psi\|_{L^p}
  \int_{|z|>1}\dd\mu(z).\end{equation}
 \item Assumption \eqref{gas} is equivalent to requiring 
$
f\in L^1(0,T;L^1(\R^N)\cap L^\infty(\R^N))
$, an iterated $L^{P}$-space as in e.g.
\cite{BePa61}. Note that $L^1(0,T;L^1(\R^N))=L^1(Q_T)$.
\end{enumerate}
\end{remark}

\begin{definition}[Distributional solution]\label{distsol} 
Let  $u_0\in L_\textup{loc}^1(\R^N)$ and  $f\in
L_\textup{loc}^1(Q_T)$. Then $u\in L_\textup{loc}^1(Q_T)$ is a
distributional (or very weak) solution of \eqref{E} if for all $\psi\in
C_\textup{c}^\infty(\R^N\times[0,T))$, $\varphi(u)\Operator[\psi]\in
L^1(Q_T)$ and
\begin{align}\label{def_eq}
\int_{0}^{T}\int_{\R^N} \big(u\dell_t\psi+ \varphi(u)\Operator[\psi]+f\psi\big)\dd x \dd t+\int_{\R^N}u_0(x)\psi(x,0)\dd x=0
\end{align}
\end{definition}
Note that  $\varphi(u)\Operator[\psi]\in
L^1$ if e.g.~$u\in L^\infty$ and $\varphi$ continuous. Distributional solutions are unique in $L^1\cap L^\infty$.
\begin{theorem}[Theorem 3.1 \cite{DTEnJa17b}]\label{unique}
  Assume \eqref{phias}, \eqref{gas}, \eqref{u_0as}, and
  \eqref{muas}. Then there is at most one
  distributional solution $u$ of \eqref{E} such that $u\in L^1(Q_T)\cap
L^\infty(Q_T)$. 
  \end{theorem}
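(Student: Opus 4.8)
The plan is to adapt the Brezis--Crandall duality argument to the nonlocal symmetric operator $\Operator$. Suppose $u_1,u_2\in L^1(Q_T)\cap L^\infty(Q_T)$ are two distributional solutions of \eqref{E} with the same data, and set $w:=u_1-u_2$ and $g:=\varphi(u_1)-\varphi(u_2)$. Subtracting the two weak identities \eqref{def_eq} makes the $u_0$-terms cancel, so
\[
\int_0^T\int_{\R^N}\big(w\,\dell_t\psi+g\,\Operator[\psi]\big)\dd x\dd t=0
\qquad\text{for all }\psi\in C_\textup{c}^\infty(\R^N\times[0,T)).
\]
Because $\varphi$ is nondecreasing, $w$ and $g$ share the same sign, so $g=a\,w$ with $a:=g/w\geq0$ (set $a:=0$ where $w=0$); note that $a$ need not be bounded since $\varphi$ is merely continuous. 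I would conclude $w\equiv0$ by showing $\int_0^T\int_{\R^N}w\,\theta=0$ for every $\theta\in C_\textup{c}^\infty(Q_T)$.

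The heart of the proof is to insert into the weak identity a solution of the backward-in-time dual problem. For a smooth regularization $a_n$ of $a$ with $0<\veps_n\leq a_n$ and $a_n\to a$ suitably, I would solve the linear nonlocal parabolic equation
\[
\dell_t\phi_n+a_n\,\Operator[\phi_n]=\theta\quad\text{in }Q_T,
\qquad \phi_n(\cdot,T)=0 ,
\]
which is (degenerate) parabolic since $a_n>0$ and thus yields a sufficiently regular $\phi_n$. Using $\phi_n$ as a test function (after the enlargement and cutoff discussed below) and the identity $g=a\,w$, the weak formulation rearranges to
\[
\int_0^T\int_{\R^N}w\,\theta=\int_0^T\int_{\R^N}(a_n-a)\,w\,\Operator[\phi_n],
\]
and Cauchy--Schwarz bounds the right-hand side by $\big(\int_0^T\int_{\R^N}\tfrac{(a_n-a)^2}{a_n}w^2\big)^{1/2}\big(\int_0^T\int_{\R^N}a_n\,|\Operator[\phi_n]|^2\big)^{1/2}$.

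The second factor is controlled uniformly in $n$ by an energy estimate, and this is exactly where the symmetry of $\mu$ (hence self-adjointness of $\Operator=L^\sigma+\Levymu$) enters: testing the dual equation against $\Operator[\phi_n]$ and using $\int_{\R^N}\Operator[\phi_n]\,\dell_t\phi_n\dd x=-\tfrac12\tfrac{\dd}{\dd t}\E(\phi_n,\phi_n)$ for the nonpositive Dirichlet form $\E$ associated with $\Operator$, together with $\int_{\R^N}\theta\,\Operator[\phi_n]=\int_{\R^N}\Operator[\theta]\,\phi_n$, gives $\int_0^T\int_{\R^N}a_n|\Operator[\phi_n]|^2\leq C(\theta)$ independent of $n$ (after an auxiliary $L^2$ bound on $\phi_n$). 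The first factor tends to $0$: since $w^2\leq\|w\|_{L^\infty}|w|\in L^1(Q_T)$ and $a_n$ is chosen so that $\tfrac{(a_n-a)^2}{a_n}w^2\to0$ pointwise while staying dominated (using that $a\,w^2=g\,w\in L^1(Q_T)$), dominated convergence applies. Hence $\int_0^T\int_{\R^N}w\,\theta=0$ for all $\theta$, so $w\equiv0$.

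The main obstacle is the construction and control of the dual solution $\phi_n$. One needs enough regularity (in practice $C^2$ in space and $C^1$ in time, or a mollified surrogate) to apply $\Operator[\phi_n]$ and to justify $\phi_n$ as an admissible test function; the uniform energy bound above; and --- because $\phi_n$ is not compactly supported --- decay/tail estimates so that the spatial cutoff errors vanish, these last exploiting the integrability of $w$ and $g$ together with the Lévy condition \eqref{muas} on $\mu$ to control the nonlocal tails. A secondary but necessary technical point is enlarging the admissible test-function class in \eqref{def_eq} from $C_\textup{c}^\infty$ to functions $\psi$ with $\Operator[\psi]\in L^1$ and suitable decay, so that $\phi_n$ may legitimately be inserted; this is carried out by density using the bound \eqref{eq:WPL}.
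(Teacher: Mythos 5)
Two remarks, one about context and one about substance.

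First, this paper does not actually prove Theorem \ref{unique}: it is imported from \cite{DTEnJa17b} (Theorem 3.1 there), with the detailed arguments appearing in \cite{DTEnJa17a} for the purely nonlocal case. So the comparison must be with the proof in those references, and your overall strategy coincides with theirs: a Brezis--Crandall duality argument adapted to the symmetric operator $\Operator$. Your algebraic skeleton is the right one --- subtracting the two weak formulations, writing $g=a\,w$, the identity $\int_0^T\int_{\R^N} w\,\theta\,\dd x\dd t=\int_0^T\int_{\R^N}(a_n-a)\,w\,\Operator[\phi_n]\dd x\dd t$, the Cauchy--Schwarz splitting, the energy estimate exploiting self-adjointness of $\Operator$ (this is where symmetry of $\mu$ enters, exactly as you say), and the need to enlarge the test-function class and control spatial tails.

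The genuine gap is the sentence asserting that the dual problem $\dell_t\phi_n+a_n\Operator[\phi_n]=\theta$, $\phi_n(\cdot,T)=0$, ``is (degenerate) parabolic since $a_n>0$ and thus yields a sufficiently regular $\phi_n$.'' For $\Operator=\Delta$ (the original Brezis--Crandall setting) this is classical parabolic theory. Under \eqref{muas} alone it is not available: $\mu$ may be anisotropic, degenerate, or even purely atomic --- the discrete operators $\Levy^h$ of this very paper satisfy \eqref{muas} --- so $a_n\Operator$ need have no regularizing effect whatsoever, and with a variable coefficient $a_n(x,t)$ there is no semigroup or Schauder-type theory producing a $\phi_n$ regular enough to (i) give $\Operator[\phi_n]$ a pointwise or $L^2$ meaning, (ii) justify testing the dual equation with $\Operator[\phi_n]$ in your energy identity, and (iii) be inserted, after cutoff, into the weak formulation. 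Every step downstream is conditional on this construction, so acknowledging it as ``the main obstacle'' does not discharge it; as stated, the regularity claim is simply unavailable in this generality. The cited proof never solves the dual problem for the full operator: it works with a regularized dual problem in which the singular part of $\mu$ is truncated away (the resulting operator is bounded, so the dual problem is an ODE in a Banach space, solvable by fixed-point iteration and inheriting the smoothness of $a_n$ and $\theta$), and the theorem is recovered by passing to the limit in the truncation. That limit passage requires uniform-in-truncation energy and tail estimates and a careful ordering of the parameters (mollification of $a$, truncation of $\mu$, spatial cutoff), and it constitutes the mathematical core of the nonlocal adaptation --- precisely the part missing from your sketch.
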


\subsection{Numerical schemes without spatial grids}
\label{num_wog}

Let $\mathcal{T}_{\Delta t}^T=\{t_j\}_{j=0}^{J}$ be a 
nonuniform grid in time such that $0=t_0<t_1<\ldots <t_J=T$. Let
$\J:=\{1,\ldots,J\}$, and denote time steps by
\begin{equation}\label{def:timesteps}
\Delta t_j=t_j-t_{j-1} \quad \textup{for every} \quad j\in \J, \quad \textup{and}  \quad \Delta t=\max_{j\in \J}\{\Delta t_j\}.
\end{equation}
For $j\in\J$ and $x\in\R^N$, we define
\begin{equation}\label{sourceNumTime}
F(x,t_j):=F^j(x)=\frac{1}{ \Delta t_j}\int_{t_j-\Delta t_j}^{t_j} f(x,t) \dd t,
\end{equation}
and we define our time discretized scheme, for $h>0$, as
\begin{equation}\label{defNumSch}
\begin{cases}
U_h^j(x)=U_h^{j-1}(x)+\Delta t_j\Big(\Levy_1^{h}[\varphi_1^h(U_h^j)](x)+\Levy_2^{h}[\varphi_2^h(U_h^{j-1})](x)+F^j(x)\Big)\\[0.2cm]
U_h^0(x)=u_0(x)
\end{cases}
\end{equation}
where, formally, $U_h^j(x)\approx u(x,t_j)$, $\frac{U_h^j(x)-U_h^{j-1}(x)}{\Delta t_j}\approx \dell_tu(x,t_j)$, and 
$$
\Levy_1^{h}[\varphi_1^h(U_h^j)](x)+\Levy_2^{h}[\varphi_2^h(U_h^{j-1})](x)\approx \Operator[\varphi(u)](x,t_j).
$$

Typically $\varphi_1^h=\varphi=\varphi_2^h$, but when
   $\varphi$ is not Lipschitz, we have to approximate it by a
   Lipschitz $\varphi_2^h$ to get a monotone explicit method
   \cite{DTEnJa18b}.
   Let $\varphi_1^h=\varphi=\varphi_2^h$. Depending on the choice of
   $\Levy_1^{h}$ and $\Levy_2^{h}$, we can then get many different
   schemes:
\begin{enumerate}[{\rm (1)}]
 \item Discretizing separately the different parts of the operator
   $$\Operator=L^\sigma+\mathcal L^\mu_{\text{sing}}+\mathcal
   L^\mu_{\text{bnd}},$$
   e.g. the local, singular nonlocal, and bounded nonlocal
   parts, corresponds to different choices for  $\Levy_1^h$ and
   $\Levy_2^h$. Typical choices here are finite difference and numerical
   quadrature methods, see Section \ref{sec:discNumExp} for  several 
   examples.\smallskip 
\item Explicit schemes ($\theta=0$),
   implicit schemes ($\theta=1$), or combinations like Crank-Nicholson
   ($\theta=\frac12$), follow by the choices
   $$\Levy_1^{h}=\theta\Levy^{h}\qquad\text{and}\qquad
   \Levy_2^{h}=(1-\theta)\Levy^{h}.$$
 \item Combinations of type (1) and (2) schemes,
   e.g. implicit discretization of the unbounded part of $\Operator$
   and explicit discretization of the bounded part.
\end{enumerate}
Finally, we mention that our schemes and results may easily be
extended to handle any finite number of  $\varphi_1^h,\dots,\varphi_m^h$ and $\Levy_1^h,\dots,\Levy_m^h$.

\begin{definition}[Consistency]\label{schCon} 
We say that the scheme \eqref{defNumSch}  is \textup{\textbf{consistent}} if, for $\varphi_1,\varphi_2,\varphi$ satisfying \eqref{phias}, $\mu$
 \eqref{muas}, and $\Operator_1$, $\Operator_2$, $\Operator$ of the form \eqref{defbothOp}--\eqref{deflevy}, 
\begin{enumerate}[{\rm (i)}]
\item\label{schCon:Eq} $\Operator_1[\varphi_1(\phi)]+\Operator_2[\varphi_2(\phi)]=\Operator[\varphi(\phi)]$ in $\mathcal{D}'(Q_T)$ for $\phi\in L^1(\R^N)\cap L^\infty(\R^N)$,
\item\label{schCon:Op} for all $ \psi \in C_\textup{c}^\infty(\R^N)$ and some $k_1,k_2\geq0$,
$$
\|\Levy_i^{h}[\psi]-\Operator_i[\psi]\|_{L^1(\R^N)}\leq \|\psi\|_{W^{k_i,1}(\R^N)}o_h(1)\stackrel{h\to0^+}{\longrightarrow}0 \qquad\textup{for}\qquad i=1,2,
$$
\item\label{schCon:Non} $\varphi_1^h,\varphi_{2}^h\to \varphi_1,\varphi_2$ locally uniformly as $h\to0^+$.
\end{enumerate}
\end{definition}

\begin{remark}
In view of step 4) in the proof of Lemma \ref{lem:conv}, condition
\eqref{schCon:Op} can be replaced by the following
more general consistency condition
$$
\|\Levy_i^{h}[\psi(\cdot,t)]-\Operator_i[\psi(\cdot,t)]\|_{C([0,T];L^1(\R^N))}\stackrel{h\to0^+}{\longrightarrow}0\qquad\textup{for all}\qquad \psi\in C_\textup{c}^\infty(Q_T)
$$ 
and for $i=1,2$. This concept of consistency holds for all the discretizations we are considering; see also the companion paper \cite{DTEnJa18b}.
\end{remark}


We will focus on discrete operators $\Levy_i^h$, $i=1,2$ in the following class:

\begin{definition}\label{LevyInTheClass}
  An operator $\Levy$ is said to be
  \smallskip
\begin{enumerate}[{\rm (i)}]
\item {\bf in the class \eqref{nuas}} if $
  \Levy=\Levy^{\nu}$ for a measure $\nu$ satisfying \eqref{nuas}; and
  \medskip
\item {\bf discrete} if
\[
\nu=\sum_{\beta\not=0} (\delta_{z_\beta}+\delta_{z_{-\beta}}) \omega_\beta
\]
for $z_\beta=-z_{-\beta}\in\R^N$ and $\omega_\beta=\omega_{-\beta}\in
\R_+$ such that $\sum_{\beta\not=0}\omega_\beta<\infty$.
\medskip 
\item $\mathcal S=\{z_\beta\}_{\beta}$ is called the {\bf stencil} and
  $\{\omega_\beta\}_\beta$ the {\bf weights} of the discretization.
\end{enumerate}
\end{definition}

All operators in the class \eqref{nuas} are nonpositive operators, in particular they are integral or quadrature operators with positive weights.
The results presented in this section hold for any operator in the
class \eqref{nuas}. However, in practice, when dealing with numerical
schemes, the operators will additionally be discrete. Moreover, when the scheme \eqref{defNumSch} has an explicit part, that is
$\nu_2^h$ and $\varphi_2^h$ are not simultaneously zero, we need to assume that $\varphi_2^h$
satisfies \eqref{philipas} and impose the following CFL-type condition
to have a monotone scheme:  
\begin{equation}\label{CFL}\tag{CFL}
\Delta tL_{\varphi_2^h} \nu_{2}^h(\R^N)\leq1,
\end{equation}
where we recall that $L_{\varphi_2^h}$ is the Lipschitz constant
of $\varphi_2^h$ (see Remark \ref{addingconstants}). Note that this condition is always satisfied for an
implicit method where $\nu_2^h\equiv0$.
The typical
assumptions on the scheme \eqref{defNumSch} are then:
\begin{equation}\label{asNumSch}\tag{$\textup{A}_{\textup{NS}}$}
\begin{cases}
\text{ $\Levy_1^h,\Levy_2^h$ are in the class \eqref{nuas} with respective measures $\nu_1^h,\nu_2^h$,}\\[0.2cm]
\text{$\varphi_1^h,\varphi_2^h$ satisfy \eqref{phias}, \eqref{philipas} respectively, and}\\[0.2cm]
\text{$\Delta t>0$ is such that \eqref{CFL} holds.}
\end{cases}
\end{equation}

\begin{theorem}[Existence and uniqueness]\label{thm:existuniqueNumSch}
Assume \eqref{asNumSch}, \eqref{gas}, and \eqref{u_0as}.
Then there exists a unique a.e.-solution $U_h^j\in L^1(\R^N)\cap L^\infty(\R^N)$ of the scheme \eqref{defNumSch}.
\end{theorem}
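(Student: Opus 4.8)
The plan is to solve \eqref{defNumSch} step by step in the time index $j$, reducing the whole scheme to the solvability of a single nonlinear resolvent equation at each level. The base case $U_h^0=u_0\in L^1(\R^N)\cap L^\infty(\R^N)$ is given by \eqref{u_0as}. Assuming inductively that $U_h^{j-1}\in L^1\cap L^\infty$, I first check that the explicit data is as regular. Since $\Levy_2^h=\Levy^{\nu_2^h}$ with $\nu_2^h$ finite by \eqref{nuas} and $\varphi_2^h$ is Lipschitz by \eqref{philipas}, the bound $|\varphi_2^h(a)-\varphi_2^h(b)|\le L_{\varphi_2^h}|a-b|$ together with Minkowski's inequality gives $\Levy_2^h[\varphi_2^h(U_h^{j-1})]\in L^1\cap L^\infty$, with norm in each space controlled by $2L_{\varphi_2^h}\nu_2^h(\R^N)$ times that of $U_h^{j-1}$; and \eqref{gas} together with Jensen's inequality in time forces $F^j\in L^1\cap L^\infty$. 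Hence $G:=U_h^{j-1}+\Delta t_j\Levy_2^h[\varphi_2^h(U_h^{j-1})]+\Delta t_j F^j$ lies in $L^1\cap L^\infty$, and it remains to find a unique $U:=U_h^j\in L^1\cap L^\infty$ solving $U-\Delta t_j\Levy_1^h[\varphi_1^h(U)]=G$. (No use of \eqref{CFL} is needed here; that condition enters the monotonicity and stability estimates of the later results.)

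To solve this resolvent equation I would exploit the zeroth-order structure of the finite-measure operator: since $\nu_1^h$ is finite and symmetric, the compensator in \eqref{deflevy} integrates to zero and $\Levy_1^h[\psi](x)=\int_{\R^N\setminus\{0\}}(\psi(x+z)-\psi(x))\dd\nu_1^h(z)$, so with $a:=\Delta t_j\,\nu_1^h(\R^N)\ge0$ the equation reads $U+a\varphi_1^h(U)=G+\Delta t_j\int\varphi_1^h(U(\cdot+z))\dd\nu_1^h(z)$. The scalar map $s\mapsto s+a\varphi_1^h(s)$ is continuous and strictly increasing, hence a bijection of $\R$ whose inverse $\Phi_a$ is nondecreasing and $1$-Lipschitz; applying $\Phi_a$ puts the equation in fixed-point form $U=\mathcal F(U)$ with $\mathcal F(U)=\Phi_a\big(G+\Delta t_j\int\varphi_1^h(U(\cdot+z))\dd\nu_1^h(z)\big)$, and $\mathcal F$ is order-preserving because $\varphi_1^h$ and $\Phi_a$ are nondecreasing and $\nu_1^h\ge0$. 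Crucially $\mathcal F$ need not be a contraction, so I would obtain existence by monotone iteration rather than by Banach's theorem: the constants $\pm\|G\|_{L^\infty}$ are, respectively, a super- and a subsolution for $\mathcal F$, so starting from $U^{(0)}\equiv-\|G\|_{L^\infty}$ the iterates $U^{(n+1)}=\mathcal F(U^{(n)})$ increase and stay bounded by $\|G\|_{L^\infty}$, hence converge a.e.\ to a limit $U$ with $\|U\|_{L^\infty}\le\|G\|_{L^\infty}$. Passing to the limit inside the nonlocal term is legitimate by dominated convergence against the finite measure $\nu_1^h$ (the required a.e.\ convergence of $z\mapsto\varphi_1^h(U^{(n)}(x+z))$ holding for a.e.\ $x$ by a Fubini argument), and continuity of $\varphi_1^h$ and $\Phi_a$ then yields $U=\mathcal F(U)$.

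Uniqueness and the $L^1$-bound I would extract from a Kato-type comparison inequality. Writing $w=U-\tilde U$ and $W=\varphi_1^h(U)-\varphi_1^h(\tilde U)$ for two solutions with data $G,\tilde G$, subtracting the equations and testing against $\mathbf 1_{\{w>0\}}$ gives, after using monotonicity of $\varphi_1^h$ (so that $W\ge0$ on $\{w>0\}$ and $\{W>0\}\subseteq\{w>0\}$) and symmetry of $\nu_1^h$ (so that $\int\Levy_1^h[W]\,\mathbf 1_{\{w>0\}}\dd x\le0$), the bound $\|(U-\tilde U)^+\|_{L^1}\le\|(G-\tilde G)^+\|_{L^1}$. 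Taking $\tilde G=G$ gives uniqueness, while $\tilde G=0$, $\tilde U=0$ (which solves the equation since $\Levy_1^h$ annihilates constants) gives $\|U\|_{L^1}\le\|G\|_{L^1}$. Feeding this back into the induction keeps every $U_h^j$ in $L^1\cap L^\infty$ and closes the argument.

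I expect the main obstacle to be making the comparison step rigorous when $\varphi_1^h$ is merely continuous (assumption \eqref{phias}) rather than Lipschitz. The $L^\infty$ solution produced by the monotone iteration is not a priori integrable, so the test against $\mathbf 1_{\{w>0\}}$ cannot be justified directly; moreover the degeneracy (flat parts) of $\varphi_1^h$ forces one to test with $\mathbf 1_{\{U>\tilde U\}}$ and not with $\mathbf 1_{\{\varphi_1^h(U)>\varphi_1^h(\tilde U)\}}$. I would resolve this by first running the entire argument for globally Lipschitz nondecreasing approximations $\varphi_1^{h,m}\to\varphi_1^h$, where the estimate $|\varphi_1^{h,m}(U)-\varphi_1^{h,m}(\tilde U)|\le L_m|U-\tilde U|$ propagates integrability through the nonlocal term and validates the comparison inequality, producing solutions $U_m$ with the $m$-independent bounds $\|U_m\|_{L^1}\le\|G\|_{L^1}$ and $\|U_m\|_{L^\infty}\le\|G\|_{L^\infty}$; passing to the limit $m\to\infty$ by monotonicity in $m$ together with these uniform bounds and the continuity of $\varphi_1^h$ then furnishes the unique $L^1\cap L^\infty$ solution of the resolvent equation, and hence of \eqref{defNumSch}, in the general case of \eqref{phias}.
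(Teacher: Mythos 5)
Your overall architecture matches the paper's: induction in $j$, an explicit step handled by crude $L^1/L^\infty$ bounds on the zero-order operator (your remark that \eqref{CFL} is not needed for existence is correct -- the paper's monotonicity-based bounds on $\Te$ use it, but your Minkowski-type estimate avoids it), and a reduction to the nonlinear resolvent equation $U-\Delta t_j\Levy_1^h[\varphi_1^h(U)]=G$, which is exactly the paper's \eqref{EllipP}. The gap is in how you solve that resolvent equation, and it is genuine. Your monotone iteration starts from the constant barriers $\pm\|G\|_{L^\infty}$, so every iterate, and hence the limit, is only known to lie in $L^\infty(\R^N)$; no $L^1$ information survives the construction. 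But your Kato-type step -- integrating $\Levy_1^h[W]\indik_{\{w>0\}}\leq \Levy_1^h[W^+]$ over $\R^N$ and using that the integral of $\Levy^\nu$ of an $L^1$ function vanishes (Lemma \ref{Levynuwell-def}~(b)) -- is only valid when $W^+=(\varphi_1^h(U)-\varphi_1^h(\tilde U))^+\in L^1(\R^N)$; for a merely bounded, non-integrable $W^+$ the quantity $\int\Levy_1^h[W^+]\dd x$ is not even absolutely convergent, let alone zero. With $\tilde U=0$ this requires $U^+\in L^1$ a priori, which is precisely the $L^1$-bound you are trying to prove: the argument is circular. Your proposed repair (globally Lipschitz approximations $\varphi_1^{h,m}$) does not break the circle, because for each fixed $m$ the existence mechanism is still the same iteration from non-integrable constants; the Lipschitz bound converts integrability of $(U-\tilde U)^+$ into integrability of $W^+$, but it cannot create the former. (Note that the paper's Lemma \ref{TContEllip} explicitly \emph{assumes} $(w-\hat w)^+\in L^1$ for this reason.) A natural attempt to force $L^1$ control of the iterates fails as well: the fixed-point map has $L^1$-Lipschitz constant $\Delta t_j L_{\varphi_1^{h,m}}\nu_1^h(\R^N)$, and no CFL condition is assumed on the \emph{implicit} part, so it is not a contraction and the iterates' $L^1$ norms can blow up.

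The paper's way around this is the key device your proposal is missing: in Proposition \ref{firstex} it changes unknown to $\W=\Phi(w)=w+\nu(\R^N)\varphi(w)$, and shows (using that $w-\hat w$ and $\varphi(w)-\varphi(\hat w)$ have the same sign) that the resulting map $\mathbf{M}$ is an $L^1$-contraction with constant $1-\tfrac{1}{1+\nu(\R^N)c}<1$ \emph{without any CFL-type restriction}, so Banach's theorem produces solutions directly in $L^1(\R^N)$, to which the Kato lemma then legitimately applies. Two further steps of yours would also need replacing. First, passing from Lipschitz $\varphi_1^{h,m}$ to merely continuous $\varphi_1^h$ ``by monotonicity in $m$'' is not available: solutions of \eqref{EllipP} are comparable in the data $\rho$, not in the nonlinearity, so $U_m$ need not be monotone in $m$; the paper instead derives uniform $L^1$-translation estimates from the contraction and invokes Kolmogorov--Riesz compactness (steps 1)--3) of the proof of Theorem \ref{maincor}). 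Second, uniqueness in the limiting case \eqref{phias} cannot come from your Kato inequality at all, since for non-Lipschitz $\varphi_1^h$ one cannot deduce $W^+\in L^1$ from $U,\tilde U\in L^1\cap L^\infty$; the paper resolves this by citing the separately proved uniqueness theorem for bounded distributional solutions of the elliptic problem (Theorem \ref{NAuniquemuEP}, from \cite{DTEnJa17b}), an ingredient for which your proposal offers no substitute.
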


\begin{remark}
Since $U_h^j$ is a Lebesgue measurable function, it is not immediately clear that $\varphi_1^h(U_h^j),\varphi_2^h(U_h^{j-1})$ are $\nu_1^h,\nu_2^h$-measurable and $\Levy_1^h[\varphi_1^h(U_h^j)],\Levy_2^h[\varphi_2^h(U_h^{j-1})]$ are pointwisely well-defined. However, we could simply consider a Borel measurable a.e. representative of $U_h^j$; see also Remark 2.1 (1) and (2) in \cite{AlCiJa12} for a discussion.
\end{remark}

\begin{theorem}[A priori estimates]\label{thm:propertiesscheme}
Assume \eqref{asNumSch}, \eqref{gas}, and \eqref{u_0as}. Let
$U^j_h,V^j_h$ be solutions of the scheme \eqref{defNumSch} with data
$u_0,v_0$ and $f,g$. Then:
\smallskip
\begin{enumerate}[{\rm (a)}]
\item \textup{(Monotonicity)}  \label{thm:propertiesscheme:monotone} If $u_{0}(x)\leq v_{0}(x)$ and $f(x,t)\leq g(x,t)$, then $U_h^{j}(x)\leq V_h^{j}(x)$.
\medskip
\item  \textup{($L^1$-stability)} \label{thm:propertiesscheme:intstable}
  $\|U_h^j\|_{L^1(\R^N)}\leq
  \|u_0\|_{L^1(\R^N)}+\int_{0}^{t_{j}}\|f(\cdot,\tau)\|_{L^1(\R^N)}\dd
  \tau$.
  \medskip
\item  \textup{($L^\infty$-stability)} \label{thm:propertiesscheme:bndstable} $\|U_h^j\|_{L^\infty(\R^N)}\leq \|u_{0}\|_{L^\infty(\R^N)}+\int_{0}^{t_{j}}\|f(\cdot,\tau)\|_{L^\infty(\R^N)}\dd \tau$.\medskip
\item\textup{(Conservativity)}\label{thm:propertiesscheme:conservative} If $\varphi_1^h$ additionally satisfies \eqref{philipas}, 
$$
\int_{\R^N}U_h^j(x)\dd x=\int_{\R^N}u_0(x)\dd x+\int_0^{t_j}\int_{\R^N}f(x,\tau)\dd x \dd \tau.
$$
\end{enumerate}
\end{theorem}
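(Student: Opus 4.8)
The plan is to prove all four estimates by induction on the time level $j$, reducing each one to a comparison/contraction principle for a single implicit step. Rewrite the scheme \eqref{defNumSch} as $\mathcal{I}_j[U_h^j]=\mathcal{E}_j[U_h^{j-1}]+\Delta t_j F^j$, where the implicit and explicit maps are
$$\mathcal{I}_j[w]:=w-\Delta t_j\Levy_1^h[\varphi_1^h(w)],\qquad \mathcal{E}_j[v]:=v+\Delta t_j\Levy_2^h[\varphi_2^h(v)].$$
By Theorem \ref{thm:existuniqueNumSch} both maps are well defined on $L^1(\R^N)\cap L^\infty(\R^N)$ and $\mathcal{I}_j$ is invertible, so it suffices to establish: (I) $\mathcal{I}_j^{-1}$ is order preserving and contracts positive parts in $L^1$; and (E) $\mathcal{E}_j$ is order preserving and contracts positive parts in $L^1$ under \eqref{CFL}.

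For the implicit map (I) the heart of the matter is the accretivity inequality
$$\int_{\R^N}\Levy_1^h\big[\varphi_1^h(U)-\varphi_1^h(V)\big]\,\indik_{\{U>V\}}\,\dd x\le 0\qquad\text{for }U,V\in L^1\cap L^\infty.$$
Granting it, testing $\mathcal{I}_j[U]-\mathcal{I}_j[V]$ against $\indik_{\{U>V\}}$ and using that $\varphi_1^h$ is nondecreasing gives $\int(U-V)^+\le\int(\mathcal{I}_j[U]-\mathcal{I}_j[V])^+$, which yields both order preservation (if $\mathcal{I}_j[U]\le\mathcal{I}_j[V]$ then $U\le V$) and the $L^1$-contraction for $\mathcal{I}_j^{-1}$. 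To prove the inequality I would write $\Levy_1^h[\Phi](x)=\int(\Phi(x+z)-\Phi(x))\dd\nu_1^h(z)$ with $\Phi=\varphi_1^h(U)-\varphi_1^h(V)$, peel off the diagonal term as $-\nu_1^h(\R^N)\int\Phi\,\indik_{\{U>V\}}=-\nu_1^h(\R^N)\int\Phi^+$ (valid since $\{\Phi>0\}\subseteq\{U>V\}$ because $\varphi_1^h$ is nondecreasing), and bound the off-diagonal term via the pointwise estimate $\Phi(x+z)\indik_{\{U>V\}}(x)\le\Phi^+(x+z)$ and translation invariance, $\int\!\!\int\Phi(x+z)\indik_{\{U>V\}}(x)\,\dd\nu_1^h\,\dd x\le\nu_1^h(\R^N)\int\Phi^+$; the two contributions cancel into the claimed sign.

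For the explicit map (E) the same bookkeeping works: writing $\mathcal{E}_j[U]-\mathcal{E}_j[V]=H+\Delta t_j\int\Psi(\cdot+z)\,\dd\nu_2^h$ with $H=(U-V)-\Delta t_j\nu_2^h(\R^N)(\varphi_2^h(U)-\varphi_2^h(V))$ and $\Psi=\varphi_2^h(U)-\varphi_2^h(V)$, condition \eqref{CFL} together with the Lipschitz bound on $\varphi_2^h$ gives $H^+=H\,\indik_{\{U>V\}}\le(U-V)^+$ and $\int H^+=\int(U-V)^+-\Delta t_j\nu_2^h(\R^N)\int\Psi^+$; combined with $\int\!\!\int\Psi(\cdot+z)\indik_{\{U>V\}}\le\nu_2^h(\R^N)\int\Psi^+$ the surplus cancels and $\int(\mathcal{E}_j[U]-\mathcal{E}_j[V])^+\le\int(U-V)^+$, which also immediately gives order preservation. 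Parts (a)–(d) then follow: (a) is order preservation of $\mathcal{I}_j^{-1}\circ(\mathcal{E}_j+\Delta t_j F^j)$ in both data and source; (b) follows by comparing $U_h^j$ with the zero solution (legitimate since $\varphi_i^h(0)=0$ by Remark \ref{addingconstants}) applied to $(U_h^j)^\pm$ separately, telescoping the per-step contraction and using $\Delta t_j\|(F^j)^\pm\|_{L^1}\le\int_{t_{j-1}}^{t_j}\|f^\pm(\cdot,\tau)\|_{L^1}\dd\tau$; (c) follows by comparing with the constant super/subsolutions $\pm M^j$, $M^j:=\|u_0\|_{L^\infty}+\int_0^{t_j}\|f(\cdot,\tau)\|_{L^\infty}\dd\tau$, noting $\Levy_i^h$ annihilates constants so that $\mathcal{I}_j[M^j]=M^j\ge M^{j-1}+\Delta t_j\|F^j\|_{L^\infty}\ge\mathcal{E}_j[M^{j-1}]+\Delta t_j F^j=\mathcal{I}_j[U_h^j]$ and applying (I); and (d) follows, when $\varphi_1^h$ is also Lipschitz so that $\varphi_i^h(U_h^{\cdot})\in L^1$, from $\int_{\R^N}\Levy_i^h[\varphi_i^h(\cdot)]\dd x=0$ (translation invariance and Fubini, with $\nu_i^h$ finite) by integrating \eqref{defNumSch} over $\R^N$ and telescoping.

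The step I expect to be the main obstacle is the implicit accretivity inequality when $\varphi_1^h$ is merely continuous: although $\Levy_1^h[\varphi_1^h(U_h^j)]\in L^1$ as a whole (it equals $\Delta t_j^{-1}(U_h^j-U_h^{j-1})-\Levy_2^h[\varphi_2^h(U_h^{j-1})]-F^j$), the individual pieces in the split involve $\int\Phi^+$, which need not be finite when $\varphi_1^h$ is not Lipschitz, so Fubini and the term-by-term cancellation are not a priori justified. I would handle this by first proving (I)–(E) for globally Lipschitz $\varphi_i^h$, where every integral is absolutely convergent and the computations above are rigorous, and then removing the Lipschitz assumption on $\varphi_1^h$ by approximation: take Lipschitz $\varphi_1^{h,\varepsilon}\to\varphi_1^h$ locally uniformly, pass to the limit in the corresponding scheme solutions using the uniform $L^1$ and $L^\infty$ bounds from (b)–(c) and the stability/uniqueness from Theorem \ref{thm:existuniqueNumSch}, and conclude that the estimates persist in the limit.
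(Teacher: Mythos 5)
Your proposal is correct and takes essentially the same route as the paper: there, too, each time step is factored into an explicit map and an implicit solve, $U_h^j=\Ti\big[\Te[U_h^{j-1}]+\Delta t_jF^j\big]$ (see \eqref{eq:NSexpimp}), order preservation and $L^1$-contraction are proved separately for each factor (Theorem \ref{propOpT:proplimsolnofEllipP}, via Lemmas \ref{MonoT}, \ref{TContT} and \ref{TContEllip} --- your indicator/Fubini computations are unpacked versions of these sign-testing and Crandall--Tartar arguments), the merely continuous $\varphi_1^h$ is handled by exactly your Lipschitz-approximation-plus-compactness-plus-uniqueness device (carried out in Section \ref{sec:auxresults} at the level of the elliptic problem \eqref{EllipP} rather than of the scheme), and parts (a)--(d) then follow by iterating in $j$ as you describe. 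The one point needing care in your write-up is part (c): you invoke (I) with the constant $V=M^j\notin L^1(\R^N)$, which lies outside the scope of your stated contraction, though the argument does extend (by Chebyshev $|\{U_h^j>M^j\}|<\infty$, so all integrals in the accretivity computation remain finite); the paper instead proves the implicit-step $L^\infty$-bound by an $\esssup$ argument (Lemma \ref{propEllipPbounded}).
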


\begin{remark}
By \eqref{thm:propertiesscheme:intstable}, \eqref{thm:propertiesscheme:bndstable},
and interpolation, the scheme is $L^p$-stable for $p\in[1,\infty]$.
\end{remark}

The scheme is also $L^1$-contractive and
  equicontinuous in time. Combined, these two results imply time-space
  equicontinuity and compactness of the scheme, a key step in our proof
  of convergence.

\begin{theorem}[$L^1$-contraction]\label{thm:propertiesscheme:contractive}
Under the assumptions of Theorem \ref{thm:propertiesscheme}, 
$$\int_{\R^N}(U_h^j-V_h^j)^+(x)\dd x
\leq \int_{\R^N}(u_{0}-v_{0})^+(x)\dd x+\int_0^{t_j}\int_{\R^N}(f-g)^+(x,\tau)\dd x\dd \tau.$$
\end{theorem}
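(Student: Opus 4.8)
The plan is to derive the estimate one time step at a time and then iterate. Writing $W^k:=U_h^k-V_h^k$, subtracting the two instances of the scheme \eqref{defNumSch} gives the pointwise (a.e.) identity
\[
W^j=W^{j-1}+\Dt_j\Levy_1^h[\Phi_1]+\Dt_j\Levy_2^h[\Phi_2]+\Dt_j(F^j-G^j),
\]
where $\Phi_1:=\varphi_1^h(U_h^j)-\varphi_1^h(V_h^j)$ and $\Phi_2:=\varphi_2^h(U_h^{j-1})-\varphi_2^h(V_h^{j-1})$. I would multiply this identity by $\sgn^+(W^j)$, where $\sgn^+(s):=\indik_{\{s>0\}}\in\{0,1\}$ (so no regularization is needed), and integrate over $\RN$. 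On the left this produces exactly $\int_{\RN}(W^j)^+\dd x$. The right-hand side splits into four contributions, and the goal is to bound the implicit L\'evy term by $0$, the combination of the $W^{j-1}$ term and the explicit L\'evy term by $\int_{\RN}(W^{j-1})^+\dd x$, and the source term by $\Dt_j\int_{\RN}(F^j-G^j)^+\dd x$. Throughout, the finiteness of $\nu_1^h,\nu_2^h$ together with the $L^1\cap L^\infty$ bounds of Theorem \ref{thm:propertiesscheme} justify the use of Fubini's theorem and translation invariance of Lebesgue measure.

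For the implicit term I would prove the Kato-type inequality $\int_{\RN}\sgn^+(W^j)\Levy_1^h[\Phi_1]\dd x\le0$. Writing $\Levy_1^h[\Phi_1](x)=\int_{\RN}(\Phi_1(x+z)-\Phi_1(x))\dd\nu_1^h(z)$ and applying Fubini, it suffices to estimate, for each $z$, the difference $\int_{\RN}\sgn^+(W^j(x))(\Phi_1(x+z)-\Phi_1(x))\dd x$. Since $\sgn^+\in[0,1]$ we have $\sgn^+(W^j(x))\Phi_1(x+z)\le\Phi_1(x+z)^+$, whose integral equals $\int_{\RN}\Phi_1^+\dd x$ by translation invariance; on the other hand, monotonicity of $\varphi_1^h$ gives the exact identity $\sgn^+(W^j)\Phi_1=\Phi_1^+$ (both sides vanish where $W^j\le0$, since there $\Phi_1\le0$), so the subtracted term also equals $\int_{\RN}\Phi_1^+\dd x$. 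The two contributions cancel and the claim follows. Note that only \eqref{phias} is used here, not Lipschitz continuity of $\varphi_1^h$.

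The explicit term is the main obstacle, and this is where \eqref{CFL} enters. I would group it with the $W^{j-1}$ term and exploit that $\varphi_2^h$ is nondecreasing and Lipschitz: defining $c(y):=(\varphi_2^h(U_h^{j-1}(y))-\varphi_2^h(V_h^{j-1}(y)))/W^{j-1}(y)$ where $W^{j-1}(y)\ne0$ and $c(y):=0$ otherwise, one has $0\le c\le L_{\varphi_2^h}$ and $\Phi_2(y)=c(y)W^{j-1}(y)$ for all $y$. The grouped integrand equals $\sgn^+(W^j(x))$ multiplied by
\[
\big(1-\Dt_j\nu_2^h(\RN)c(x)\big)W^{j-1}(x)+\Dt_j\int_{\RN}c(x+z)W^{j-1}(x+z)\dd\nu_2^h(z),
\]
and \eqref{CFL} guarantees $1-\Dt_j\nu_2^h(\RN)c(x)\ge0$, while $c(x+z)\ge0$. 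Using $\sgn^+\in[0,1]$ to pass to positive parts and then Fubini with translation invariance on the nonlocal piece, the integral is bounded by $\int_{\RN}(1-\Dt_j\nu_2^h(\RN)c(x))(W^{j-1})^+\dd x+\Dt_j\nu_2^h(\RN)\int_{\RN}c\,(W^{j-1})^+\dd x=\int_{\RN}(W^{j-1})^+\dd x$, the $c$-terms cancelling exactly. Finally, $\sgn^+(W^j)(F^j-G^j)\le(F^j-G^j)^+$ handles the source. Collecting the bounds yields $\int_{\RN}(W^j)^+\dd x\le\int_{\RN}(W^{j-1})^+\dd x+\Dt_j\int_{\RN}(F^j-G^j)^+\dd x$; iterating over $j$ and using Jensen's inequality on $F^i-G^i$ (recall \eqref{sourceNumTime}) to get $\Dt_i(F^i-G^i)^+\le\int_{t_{i-1}}^{t_i}(f-g)^+\dd t$ gives the stated estimate with $W^0=u_0-v_0$. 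I note that one cannot simply invoke the Crandall--Tartar lemma here, since conservativity (Theorem \ref{thm:propertiesscheme}\eqref{thm:propertiesscheme:conservative}) requires the extra assumption that $\varphi_1^h$ be Lipschitz, which is not imposed in this theorem.
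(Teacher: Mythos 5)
Your route is genuinely different from the paper's: you run a Kato-type inequality directly on the difference of the two schemes, whereas the paper factors one time step as $U_h^j=\Ti\big[\Te[U_h^{j-1}]+\Dt_j F^j\big]$ and proves an $L^1$-contraction separately for the explicit operator $\Te$ of \eqref{OpT} (a Crandall--Tartar argument: monotonicity under \eqref{CFL} plus conservativity, Lemmas \ref{MonoT} and \ref{TContT}) and for the implicit operator $\Ti$ of \eqref{OpTi}. Your handling of the explicit term (the coefficient $c$, the exact cancellation under \eqref{CFL}), of the source term, and of the final iteration with Jensen's inequality is sound. The gap is in the implicit term, and it sits exactly where you claim an advantage (``only \eqref{phias} is used here, not Lipschitz continuity of $\varphi_1^h$''). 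Under \eqref{asNumSch} the nonlinearity $\varphi_1^h$ is merely continuous and nondecreasing, so $\Phi_1^+=\big(\varphi_1^h(U_h^j)-\varphi_1^h(V_h^j)\big)^+$ is bounded but need \emph{not} belong to $L^1(\R^N)$: for the fast-diffusion nonlinearity $\varphi_1^h(s)=s|s|^{m-1}$, $m\in(0,1)$, a function in $L^1\cap L^\infty$ decaying like $|x|^{-N-\varepsilon}$ with $\varepsilon$ small has image under $\varphi_1^h$ outside $L^1(\R^N)$, and nothing in Theorem \ref{thm:propertiesscheme} excludes such decay for $U_h^j$ (take $v_0=0$, $g=0$, so $V_h^j\equiv0$ and $\Phi_1=\varphi_1^h(U_h^j)$). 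When $\int_{\R^N}\Phi_1^+\dd x=+\infty$, your key step --- bounding one piece by $\int_{\R^N}\Phi_1^+\dd x$ via translation invariance and subtracting the identical quantity coming from the identity $\sgn^+(W^j)\Phi_1=\Phi_1^+$ --- is an $\infty-\infty$ cancellation and proves nothing; moreover the preceding application of Fubini is unjustified, since $\int_{\R^N}\int_{\R^N}|\Phi_1(x+z)-\Phi_1(x)|\dd \nu_1^h(z)\dd x$ need not be finite. (From the scheme itself one does get $\Levy_1^h[\Phi_1]\in L^1(\R^N)$, since $W^j,W^{j-1},\Levy_2^h[\Phi_2],F^j-G^j$ all lie in $L^1$; but that does not permit splitting $\int_{\R^N}\sgn^+(W^j)\Levy_1^h[\Phi_1]\dd x$ into the two translated pieces.)

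This is not a removable technicality: it is precisely why the paper's Lemma \ref{TContEllip} assumes $\varphi$ globally Lipschitz --- so that $(\varphi(w)-\varphi(\hat w))^+\in L^1(\R^N)$ and conservativity (Lemma \ref{Levynuwell-def} (b)) applies --- and why the merely continuous case, which this theorem is designed to cover, is reached there by a detour: regularize $\varphi$ to $\varphi_\delta(\zeta):=(\varphi*\omega_\delta)(\zeta)-(\varphi*\omega_\delta)(0)+\delta\zeta$, prove the contraction for the regularized elliptic problems (Corollary \ref{propEllipP}), pass to the a.e.\ limit by compactness and Fatou's lemma, and identify the limit with $\Ti[\rho]$ via the uniqueness Theorem \ref{NAuniquemuEP}; see the proofs of Theorems \ref{maincor} and \ref{propOpT:proplimsolnofEllipP}. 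So your closing remark has it backwards: avoiding Crandall--Tartar does not avoid the Lipschitz issue, because your own cancellation secretly requires exactly the integrability that Lipschitz continuity of $\varphi_1^h$ would provide. Your argument becomes correct either under the additional hypothesis \eqref{philipas} on $\varphi_1^h$, or after grafting the paper's regularization-and-uniqueness step onto the implicit half; the explicit half can be kept as is.
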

For the equicontinuity in space and time we need a modulus of continuity:

\begin{align}\label{timeRegMod}
\Lambda_{K}(\zeta)&:= 2\, \lambda_{u_0,f}(\zeta^\frac{1}{3})+C_K(\zeta^{\frac{1}{3}}+\zeta),
\end{align}
  
where
\begin{align}
 &\lambda_{u_0,f}(\zeta):=
                        \sup_{|\xi|\leq\zeta}\Big(\|u_0-u_0(\cdot+\xi)\|_{L^1(\R^N)}+\|f-f(\cdot+\xi,\cdot)\|_{L^1(Q_T)}\Big),\label{omegau0f}\\ 
  &C_K:=c 
|K|\sup_{\scriptsize\begin{array}{c}h<1, \\  i=1,2\end{array}}\!\!\Big(1+\sup_{|\zeta|\leq
  M_{u_0,f}}\!\!\!|\varphi_i^h(\zeta)|\Big)\Big(1+\int_{|z|>0} |z|^2 \wedge1\ \dd \nu_i^h(z)\Big)\label{timeRegConst}
\end{align}
for some constant $c\geq1$, $a\wedge b:=\min\{a,b\}$, $K\subset \R^N$
compact with Lebesgue measure $|K|$, and
$M_{u_0,f}:=\|u_0\|_{L^\infty(\R^N)}+\int_0^T\|f(\cdot,\tau)\|_{L^\infty(\R^N)}\dd
\tau$. 
In view of \eqref{timeRegConst}, we also need to assume a uniform L\'evy condition on the approximations, 
\begin{equation}\tag{$\textup{A}_{\nu^h}$}\label{AsUnifLe}
\sup_{\scriptsize\begin{array}{c}h<1, \\  i=1,2\end{array}}\int_{|z|>0} |z|^2 \wedge1\ \dd \nu_i^h(z)<+\infty.
\end{equation}

\begin{remark}
Condition \eqref{AsUnifLe} is in general very easy to check. For
example it follows from pointwise consistency of $\mathcal L_i^h$
as we will see in \cite{DTEnJa18b}.
\end{remark}

\begin{theorem}[Equicontinuity in time]\label{thm:timereg}
Assume \eqref{gas} and \eqref{u_0as}, and let \eqref{defNumSch} be a consistent scheme satisfying \eqref{asNumSch} and \eqref{AsUnifLe}. Then,
for all $j,k\in \J$ such that $j-k\geq0$ and all compact sets $K\subset \R^N$, 
\begin{equation*}
\begin{split}
&\|U_h^j-U_h^{j-k}\|_{L^1(K)}\leq
\Lambda_K(t_j-t_{j-k})+|K|\int_{t_{j-k}}^{t_j}\|f(\cdot,\tau)\|_{L^\infty(\R^N)}\dd
\tau,
\end{split}
\end{equation*}
where $\Lambda_K$ is defined in \eqref{timeRegMod}.
\end{theorem}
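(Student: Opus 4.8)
The plan is to convert the time increment $U_h^j-U_h^{j-k}$ into a spatial regularity estimate by mollification, using the scheme to trade the time steps for applications of the (after convolution, smoothing) discrete operators. First I would telescope \eqref{defNumSch} over $i=j-k+1,\dots,j$ to obtain
\[
U_h^j-U_h^{j-k}=\sum_{i=j-k+1}^{j}\Delta t_i\Big(\Levy_1^{h}[\varphi_1^h(U_h^i)]+\Levy_2^{h}[\varphi_2^h(U_h^{i-1})]+F^i\Big).
\]
Then I fix a standard nonnegative mollifier $\rho_\delta$ supported in $B_\delta$ with $\int\rho_\delta=1$ and insert it via the triangle inequality,
\[
\|U_h^j-U_h^{j-k}\|_{L^1(K)}\le \|U_h^j-U_h^j*\rho_\delta\|_{L^1(\R^N)}+\|U_h^j*\rho_\delta-U_h^{j-k}*\rho_\delta\|_{L^1(K)}+\|U_h^{j-k}*\rho_\delta-U_h^{j-k}\|_{L^1(\R^N)},
\]
so that the two outer (mollification error) terms carry the data regularity and the middle term carries the diffusion.

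For the outer terms I would use the fact that the scheme is translation invariant: since each $\Levy_i^h$ is a convolution-type operator with a fixed measure $\nu_i^h$ and each $\varphi_i^h$ acts pointwise, $U_h^j(\cdot+\xi)$ solves \eqref{defNumSch} with translated data $u_0(\cdot+\xi),f(\cdot+\xi,\cdot)$. Applying the $L^1$-contraction of Theorem \ref{thm:propertiesscheme:contractive} in both directions gives $\|U_h^j-U_h^j(\cdot+\xi)\|_{L^1(\R^N)}\le \lambda_{u_0,f}(|\xi|)$ uniformly in $j$ and $h$, with $\lambda_{u_0,f}$ as in \eqref{omegau0f}. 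Averaging against $\rho_\delta$ then bounds each outer term by $\lambda_{u_0,f}(\delta)$, for a total contribution $2\lambda_{u_0,f}(\delta)$.

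For the middle term I would use the telescoped identity together with the commutation of $\Levy_i^h$ with convolution, $\Levy_i^{h}[\varphi_i^h(U_h^i)]*\rho_\delta=\varphi_i^h(U_h^i)*\Levy_i^{h}[\rho_\delta]$ (justified by Fubini and finiteness of $\nu_i^h$). Young's inequality and $\|\cdot\|_{L^1(K)}\le|K|\,\|\cdot\|_{L^\infty}$ give the bound $|K|\,\|\varphi_i^h(U_h^i)\|_{L^\infty}\,\|\Levy_i^{h}[\rho_\delta]\|_{L^1(\R^N)}$, where $\|\varphi_i^h(U_h^i)\|_{L^\infty}\le\sup_{|\zeta|\le M_{u_0,f}}|\varphi_i^h(\zeta)|$ by the $L^\infty$-stability in Theorem \ref{thm:propertiesscheme}. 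To estimate $\|\Levy_i^{h}[\rho_\delta]\|_{L^1(\R^N)}$ I split the $\nu_i^h$-integration at $|z|=1$: on $|z|\le1$ the symmetry of $\nu_i^h$ lets me insert the first-order term for free and Taylor-expand, yielding $\tfrac12\|D^2\rho_\delta\|_{L^1}\int_{|z|\le1}|z|^2\dd\nu_i^h\lesssim \delta^{-2}\int_{|z|\le1}|z|^2\dd\nu_i^h$, while on $|z|>1$ a crude bound gives $2\int_{|z|>1}\dd\nu_i^h$; both pieces are controlled by \eqref{AsUnifLe} and, multiplied by $\sum_i\Delta t_i=t_j-t_{j-k}=:\zeta$, produce the terms $C_K\zeta\delta^{-2}$ and $C_K\zeta$ with $C_K$ as in \eqref{timeRegConst}. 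The source is handled directly by $\|F^i*\rho_\delta\|_{L^1(K)}\le|K|\,\|F^i\|_{L^\infty}$ and $\Delta t_i\|F^i\|_{L^\infty}\le\int_{t_{i-1}}^{t_i}\|f(\cdot,\tau)\|_{L^\infty}\dd\tau$, summing to the stated $|K|\int_{t_{j-k}}^{t_j}\|f(\cdot,\tau)\|_{L^\infty}\dd\tau$.

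Finally I would balance $2\lambda_{u_0,f}(\delta)$ against the singular diffusion term $C_K\zeta\delta^{-2}$ by choosing $\delta=\zeta^{1/3}$, which turns the latter into $C_K\zeta^{1/3}$ and the former into $2\lambda_{u_0,f}(\zeta^{1/3})$; adding the $\delta$-independent large-jump term $C_K\zeta$ reproduces exactly $\Lambda_K(\zeta)$ of \eqref{timeRegMod}, plus the source term. The main obstacle is the middle step: because $\varphi_i^h(U_h^i)$ is only bounded and not integrable ($\varphi_i^h$ is merely continuous), the nonlocal term cannot be estimated in $L^1(\R^N)$ directly — this is precisely what forces the localization to $K$ (hence the $|K|$ in $C_K$) and the convolution/commutation trick, and it is the correct treatment of the singular small-jump part of $\|\Levy_i^{h}[\rho_\delta]\|_{L^1}$, via symmetry and the $|z|^2$ weight, that makes the optimization in $\delta$ (and thus the $\zeta^{1/3}$ rate) possible.
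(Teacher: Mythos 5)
Your proposal is correct and follows essentially the same route as the paper's proof: a Kru\v{z}kov-type interpolation argument that mollifies the scheme, uses commutation of $\Levy_i^h$ with convolution plus the $L^\infty$-bound on $\varphi_i^h(U_h^i)$ and the estimate $\|\Levy_i^h[\rho_\delta]\|_{L^1}\lesssim 1+\delta^{-2}$ from \eqref{AsUnifLe}, bounds the mollification errors by $\lambda_{u_0,f}(\delta)$ via translation invariance and $L^1$-contraction, and optimizes with $\delta=(t_j-t_{j-k})^{1/3}$. The only cosmetic difference is that you telescope the scheme before mollifying, whereas the paper mollifies the one-step identity and then iterates; the estimates are identical.
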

 

The main result regarding convergence of numerical schemes without spatial grids will be presented in a continuous in time and space framework. For that reason, let us define the piecewise linear time interpolant $\Ut_{h}$, for $(x,t)\in Q_T$, as

\begin{equation}\label{xt-interp}
\begin{split}
\Ut_{h}(x,t)&:=U_h^0(x)\indik_{\{t_0\}}(t)\\
&\quad+\sum_{j=1}^J
\indik_{(t_{j-1},t_j]}(t) \Big(U_h^{j-1}(x)+\frac{t-t_{j-1}}{t_j-t_{j-1}}\big(U_h^j(x)-U_h^{j-1}(x)\big)\Big).
\end{split}
\end{equation}

\begin{theorem}[Convergence]\label{thm:conv}
Assume \eqref{gas}, \eqref{u_0as}, $\Delta t=o_h(1)$, and for all
$h>0$, let $U^j_h$ be the solution of a consistent scheme \eqref{defNumSch} satisfying \eqref{asNumSch} and \eqref{AsUnifLe}. 
Then there exists a unique distributional solution $u\in L^1(Q_T)\cap
L^\infty(Q_T)\cap C([0,T];L_\textup{loc}^1(\R^N))$ of \eqref{E} and
\[
\Ut_{h}\to u \quad \text{in}  \quad C([0,T];L_\textup{loc}^1(\R^N)) \quad  \text{as} \quad h\to0^+.
\]
\end{theorem}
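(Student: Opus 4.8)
The plan is to run the compactness programme outlined in the introduction: derive uniform a priori bounds and equicontinuity estimates for the time interpolants $\Ut_h$, extract a limit via Kolmogorov--Riesz and Arzel\`a--Ascoli, identify that limit as a distributional solution of \eqref{E}, and finally upgrade subsequential convergence to full convergence through the uniqueness result of Theorem \ref{unique}. First I would transfer the estimates of Theorems \ref{thm:propertiesscheme} and \ref{thm:propertiesscheme:contractive} from the grid functions $U_h^j$ to the piecewise linear interpolant $\Ut_h$ of \eqref{xt-interp}. Since for $t\in(t_{j-1},t_j]$ the value $\Ut_h(\cdot,t)$ is a convex combination of $U_h^{j-1}$ and $U_h^j$, the $L^1$- and $L^\infty$-stability bounds of Theorem \ref{thm:propertiesscheme} pass directly to $\Ut_h$, giving uniform-in-$h$ control in $L^\infty(0,T;L^1(\R^N)\cap L^\infty(\R^N))$. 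For the spatial translation estimate I would exploit translation invariance of the discrete operators $\Levy_i^h$ (the measures $\nu_i^h$ are independent of $x$): the scheme solution with data $u_0(\cdot+\xi)$, $f(\cdot+\xi,\cdot)$ equals $U_h^j(\cdot+\xi)$, so the $L^1$-contraction of Theorem \ref{thm:propertiesscheme:contractive} yields
\[
\|U_h^j-U_h^j(\cdot+\xi)\|_{L^1(\R^N)}\leq \|u_0-u_0(\cdot+\xi)\|_{L^1(\R^N)}+\int_0^{t_j}\|f(\cdot,\tau)-f(\cdot+\xi,\tau)\|_{L^1(\R^N)}\dd\tau,
\]
a bound uniform in $h$ and controlled by the modulus $\lambda_{u_0,f}$ of \eqref{omegau0f}. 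Finally, the time equicontinuity of Theorem \ref{thm:timereg}, combined with the fact that on each interval $(t_{j-1},t_j]$ the piecewise linear interpolation error is controlled by $\Dt=o_h(1)$, supplies a uniform-in-$h$ modulus of continuity for $t\mapsto \Ut_h(\cdot,t)$ with values in $L^1(K)$ for every compact $K$.

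With these three ingredients I would carry out the compactness step. For each fixed $t$, the uniform $L^1\cap L^\infty$ bound together with the spatial translation estimate and tightness (from the $L^1$ bound) gives, via the Kolmogorov--Riesz theorem, relative compactness of $\{\Ut_h(\cdot,t)\}_h$ in $L^1(K)$ for every compact $K$, hence in $L^1_{\textup{loc}}(\R^N)$. The uniform time equicontinuity then lets me invoke a vector-valued Arzel\`a--Ascoli theorem, with $L^1_{\textup{loc}}(\R^N)$ regarded as a complete metric space, to extract a subsequence $\Ut_{h_n}$ converging in $C([0,T];L^1_{\textup{loc}}(\R^N))$ to some limit $u$, which inherits the uniform $L^1\cap L^\infty$ bounds.

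The crux is to show that this limit solves \eqref{E} in the distributional sense \eqref{def_eq}. Testing the scheme \eqref{defNumSch} against $\psi\in C_\textup{c}^\infty(\R^N\times[0,T))$, summing over $j$, and performing a discrete summation by parts in time produces the term $u\,\dell_t\psi$ together with the initial datum term, while the self-adjointness of the symmetric operators $\Levy_i^h$ (valid for measures in the class \eqref{nuas} by Fubini) moves the discrete operators onto the test function, turning the diffusion contributions into $\int\varphi_i^h(\Ut_{h_n})\,\Levy_i^h[\psi]$. To pass to the limit I would combine the operator consistency $\Levy_i^h[\psi]\to\Operator_i[\psi]$ in $L^1$ from \eqref{schCon:Op}; the local uniform convergence $\varphi_i^h\to\varphi_i$ from \eqref{schCon:Non}, which together with the uniform $L^\infty$ bound and the $L^1_{\textup{loc}}$ (hence, along a further subsequence, a.e.) convergence of $\Ut_{h_n}$ yields $\varphi_i^h(\Ut_{h_n})\to\varphi_i(u)$ by dominated convergence; and the compatibility identity \eqref{schCon:Eq}, $\Operator_1[\varphi_1(u)]+\Operator_2[\varphi_2(u)]=\Operator[\varphi(u)]$, to recombine the two diffusion terms into the single term $\varphi(u)\Operator[\psi]$. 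This passage to the limit in the nonlinear, nonlocal terms is the main obstacle: one must match the decay of the consistency error against a nonlinearity $\varphi$ that is merely continuous and against the singular behaviour of $\Operator[\psi]$ near the origin. This is exactly where the distributional formulation is decisive, since every singular and discrete operator there acts on the smooth test function $\psi$ rather than on the low-regularity solution.

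Once $u$ is identified as a distributional solution lying in $L^1(Q_T)\cap L^\infty(Q_T)\cap C([0,T];L^1_{\textup{loc}}(\R^N))$, Theorem \ref{unique} guarantees it is the unique such solution. Since every subsequence of $\{\Ut_h\}$ then admits a further subsequence converging to this same $u$, a standard argument promotes the subsequential convergence to convergence of the full family $\Ut_h\to u$ in $C([0,T];L^1_{\textup{loc}}(\R^N))$, which completes the proof.
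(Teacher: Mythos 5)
Your proposal follows essentially the same route as the paper's proof: uniform $L^1\cap L^\infty$ bounds plus translation-invariance/$L^1$-contraction for spatial equicontinuity, the time-equicontinuity estimate transferred to the piecewise linear interpolant, Kolmogorov--Riesz and Arzel\`a--Ascoli for compactness in $C([0,T];L^1_{\textup{loc}}(\R^N))$, identification of the limit through the summed-by-parts weak form of the scheme using self-adjointness, consistency \eqref{schCon:Op}--\eqref{schCon:Non}, dominated convergence, and the recombination identity \eqref{schCon:Eq}, and finally uniqueness (Theorem \ref{unique}) to upgrade subsequential to full convergence. This is exactly the structure of Theorem \ref{compIntNumSch}, Lemma \ref{lem:conv}, and Corollary \ref{cor} in the paper, so the proposal is correct and matches the paper's argument.
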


Convergence of subsequences follows from compactness and full
convergence follows from stability and uniqueness of the limit problem \eqref{E}.
The detailed proofs of Theorems \ref{thm:existuniqueNumSch}, \ref{thm:propertiesscheme}, and \ref{thm:propertiesscheme:contractive}--\ref{thm:conv} can be found in Sections \ref{sec:proofPropNumSch}--\ref{sec:proofConvNumSch}.

\begin{remark}\label{rem:RelationWithLitterature}
In this paper, we use \emph{piecewise linear interpolation} to
ensure that $\Ut_h$ belongs to $C([0,T];L_\textup{loc}^1(\R^N))$.
Moreover, we
obtain an equicontinuity result in time uniformly in $\Delta
t=o_h(1)$. Compactness and convergence then follows from 
Arzel\`a-Ascoli and Kolmogorov-Riesz type compactness results (see
e.g. \cite{DTEnJa19b}).

In most of the related literature piecewise
constant interpolation is used. In this case there is no convergence in
$C([0,T];L_\textup{loc}^1(\R^N))$, but one can use Kru\v{z}kov type
interpolation lemmas along with the Kolmogorov-Riesz compactness theorem to get convergence in $L_\textup{loc}^1(Q_T)$. Consult e.g. \cite{Kru69} for the vanishing viscosity limit of scalar conservation laws; \cite{KaRi01} for finite-difference approximations of convection-diffusion equations; \cite{AnGuWi04} for finite volume approximations of nonlinear elliptic-parabolic problems; and \cite{CiJa14} for finite volume approximations of nonlocal convection-diffusion equations. 
Yet another approach is discontinuous versions of the Arzel\`a-Ascoli compactness theorem (combined with Kolmogorov-Riesz) to get convergence in $L^\infty((0,T);L_\textup{loc}^1(\R^N))$; see the appendix of \cite{DrEy16}. 
\end{remark}

\subsection{Numerical schemes on uniform spatial grids}

To get computable schemes, we need to
introduce spatial grids. For simplicity we restrict to uniform grids.
Since our discrete operators have weights and stencils
not depending on the position $x$, all results then
become direct consequences of the results in Section \ref{num_wog}.

Let $h>0$, $R_h=h(-\frac{1}{2},\frac{1}{2}]^N$, and $\Grid$ be the
  uniform spatial grid
\[
\Grid:= h\Z^N=\{x_\beta:=h \beta: \beta\in \Z^N\}.
\]
Note that any discrete \eqref{nuas}-class operator $\Levy^h$ {\it with stencil $\mathcal{S}\subset \Grid$} is defined by
\begin{equation*}
\Levy^h[\psi](x_\beta)=\Levy^h[\psi]_\beta=\sum_{\gamma\not=0}
(\psi(x_\beta+z_\gamma)-\psi(x_\beta)) \omega_{\gamma,h}\quad\text{for
all}\quad x_\beta\in\Grid
\end{equation*}
and all $\psi:\Grid\to \R$.
  Using such discrete operators, we get the following well-defined
  numerical discretization of \eqref{E} on the space-time grid
  $\Grid\times \GridT$, 
\begin{equation}\label{FullyDiscNumSch1}
U_\beta^j=U_\beta^{j-1}+\Delta t_j\big(\Levy_1^h[\varphi_1^h(U_{\cdot}^j)]_\beta+\Levy_2^h[\varphi_2^h(U_\cdot^{j-1})]_\beta+F^j_\beta\big),\quad \beta \in \Z^N,  j \in \J, \\
\end{equation}
where $U^0_\beta$ and $F_\beta^j$ are the cell averages of the
$L^1$- functions $u_0$ and $f$: 
\begin{equation}\label{averaged1}
U^0_\beta=\frac{1}{h^N}\int_{x_\beta+R_h}u_0(x)\dd x,\quad\  F^j_\beta=\frac{1}{h^N \Delta t_j}\int_{t_j-\Delta t_j}^{t_j}\int_{x_\beta+R_h}f(x,\tau)\dd x \dd \tau.
\end{equation}

The function $F=F_\beta^j$ and the solution $U=U_\beta^j$ are functions on $\Grid\times \GridT$, and we
define their piecewise constant interpolations in space as 
\begin{align}\label{x-interp}
\overline{U^j}(x):=\sum_{\beta\neq0} \indik_{x_\beta+R_h}(x)U_\beta^j
\qquad \textup{and} \qquad 
\overline{F^j}(x):=\sum_{\beta\neq0} \indik_{x_\beta+R_h}(x)F_\beta^j.
\end{align}
The next proposition shows that solutions of the scheme
\eqref{defNumSch} with piecewise constant initial data are solutions
of the fully discrete scheme
\eqref{FullyDiscNumSch1} and vice versa.

\begin{proposition}\label{prop:equivalence} 
Assume \eqref{gas}, \eqref{u_0as}, let
$U^0$, $F$ be defined by \eqref{averaged1} and 
$\overline{U^0}$, $\overline{F^j}$ by \eqref{x-interp}, and let
$\Levy_1^h$, $\Levy_2^h$ be class \eqref{nuas} discrete operators
with stencils $\mathcal S_1,\mathcal S_2\subset \Grid$.
\begin{enumerate} [{\rm (a)}]
\item If  $U^j=U^j(x)$ is an a.e. solution of \eqref{defNumSch} with data
  $\overline{U^0}$ and $\overline{F^j}$, then (a version of) $U^j$ is
  constant on the cells $x_\beta+R_h$ for all $\beta$,  and
  $U_\beta^j:=U^j(x_\beta)$ is a solution of \eqref{FullyDiscNumSch1}
  with data $U^0_\beta$ and $F_\beta^j$. 
\item If $U_\beta^j$ is a solution of \eqref{FullyDiscNumSch1} with
  data $U^0_\beta$ and $F_\beta^j$, then $\overline{U^j}(x)$ defined
  in \eqref{x-interp} is a piecewise constant solution of
  \eqref{defNumSch} with data $\overline{U^0}$ and $\overline{F^j}$. 
\end{enumerate}

\end{proposition}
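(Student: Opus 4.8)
The plan rests on a single structural observation that makes the two schemes interchangeable on cell-constant functions. Because the stencils satisfy $\mathcal S_1,\mathcal S_2\subset\Grid=h\Z^N$, each stencil vector $z_\gamma$ translates the half-open cell $x_\beta+R_h$ \emph{bijectively} onto another cell $x_\beta+z_\gamma+R_h$, since lattice translations permute the tiling $\{x_\beta+R_h\}_\beta$. Hence, if $\psi\in L^1(\R^N)\cap L^\infty(\R^N)$ is (a.e.) constant on cells, then for a.e.\ $x\in x_\beta+R_h$ we have $\psi(x+z_\gamma)=\psi(x_\beta+z_\gamma)$ and $\psi(x)=\psi(x_\beta)$, so
$$\Levy_i^h[\psi](x)=\sum_{\gamma\neq0}\big(\psi(x_\beta+z_\gamma)-\psi(x_\beta)\big)\omega_{\gamma,h}=\Levy_i^h[\psi]_\beta\qquad\text{for a.e.\ }x\in x_\beta+R_h.$$
Thus $\Levy_i^h$ maps cell-constant functions to cell-constant functions and reproduces the grid operator of Section 2.3 cell by cell. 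Since the pointwise nonlinearities $\varphi_i^h$ and the addition of the piecewise constant datum $\overline{F^j}$ manifestly preserve cell-constancy, the entire right-hand side of \eqref{defNumSch} is cell-constant whenever its arguments are, with value on $x_\beta+R_h$ equal to the grid right-hand side of \eqref{FullyDiscNumSch1}.

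Granting this, part (b) is immediate and I would do it by induction on $j$. If $U_\beta^j$ solves \eqref{FullyDiscNumSch1}, then each interpolant $\overline{U^j}$ from \eqref{x-interp} is cell-constant, so evaluating the semi-discrete equation \eqref{defNumSch} on the cell $x_\beta+R_h$ and applying the structural identity collapses it to exactly the grid recursion \eqref{FullyDiscNumSch1} that $U_\beta^j$ satisfies; hence $\overline{U^j}$ solves \eqref{defNumSch} a.e.\ with data $\overline{U^0},\overline{F^j}$. Membership in $L^1\cap L^\infty$ is ensured by \eqref{averaged1} together with \eqref{u_0as}, \eqref{gas} and the a priori bounds, so Theorem \ref{thm:existuniqueNumSch} applies throughout.

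Part (a) is the delicate direction, and its \emph{main obstacle} is promoting a mere a.e.\ $L^1\cap L^\infty$ solution of \eqref{defNumSch} to one that is genuinely constant on cells: nothing in the definition forces an a.e.\ solution to respect the grid. I would resolve this through uniqueness rather than any direct regularity estimate. By Theorem \ref{thm:existuniqueNumSch}, \eqref{defNumSch} with data $\overline{U^0},\overline{F^j}$ has a \emph{unique} a.e.\ solution. On the other hand, the closed subspace $X\subset L^1(\R^N)\cap L^\infty(\R^N)$ of functions a.e.\ constant on cells is invariant under the one-step solution map of \eqref{defNumSch} when the data lie in $X$: the implicit relation
$$U^j_\beta-\Delta t_j\,\Levy_1^h[\varphi_1^h(U^j_\cdot)]_\beta=U^{j-1}_\beta+\Delta t_j\big(\Levy_2^h[\varphi_2^h(U^{j-1}_\cdot)]_\beta+F^j_\beta\big)$$
is solvable on the sequence space by the same monotone/contraction fixed-point argument underlying Theorem \ref{thm:existuniqueNumSch} restricted to $X$, and interpolating this grid solution and invoking part (b) produces a cell-constant solution of \eqref{defNumSch}. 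Uniqueness then identifies $U^j$ with it, so (a version of) $U^j$ is constant on each cell, $U^j(x_\beta)$ is well defined, and reading the structural identity backwards gives that $U_\beta^j:=U^j(x_\beta)$ solves \eqref{FullyDiscNumSch1}.

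The remaining points are bookkeeping: the half-open cells are permuted bijectively by lattice translations, so the structural identity holds up to the measure-zero set of cell interfaces, and the ``version'' of $U^j$ in (a) is the canonical cell-constant representative. Everything else reduces to the translation invariance of $\Levy_i^h$ and the pointwise action of $\varphi_i^h$, so the genuine content of the proposition is concentrated in the uniqueness-plus-construction step of part (a).
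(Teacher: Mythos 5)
Your proposal is correct in substance and shares its backbone with the paper: the structural identity (a stencil contained in $\Grid$ translates cells onto cells, so $\Levy_i^h$ preserves cell-constant functions and reproduces the grid operator $\Levy_i^h[\cdot]_\beta$) and the verification in part (b) are exactly the paper's. Where you genuinely diverge is the delicate step of part (a), proving that the given a.e.\ solution is cell-constant. The paper argues by symmetry: for $y\in R_h$, the translate $U^j(\cdot+y)$ solves \eqref{defNumSch} with data $\overline{U^0}(\cdot+y)$, $\overline{F^j}(\cdot+y)$, these data agree with $\overline{U^0}$, $\overline{F^j}$ along the lattice (which, by the coset decoupling coming from $\mathcal S_i\subset\Grid$, is all the scheme can see), and uniqueness (Theorem \ref{thm:existuniqueNumSch}) then forces $U^j$ to coincide with all its translates by $y\in R_h$ on the grid, i.e.\ to be constant on cells. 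You instead \emph{construct} a cell-constant solution — solving the grid recursion on the sequence space/invariant subspace $X$ — interpolate it, apply part (b), and invoke uniqueness once to identify it with $U^j$. Each route buys something: yours applies uniqueness to two bona fide $L^1\cap L^\infty$ a.e.\ solutions with \emph{identical} data, which is logically cleaner than the paper's step (as written there, the translated data differ from the original on a set of positive measure, so the appeal to a.e.\ uniqueness implicitly leans on the coset decoupling); the price is that you must check that the entire existence machinery for the implicit step $\Ti$ — not just the Banach iteration in Proposition \ref{firstex} but also the $\varphi_\delta$-approximation and compactness limit in Theorem \ref{maincor} — restricts to the closed subspace $X$ of cell-constant functions. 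You assert this ("the same fixed-point argument restricted to $X$") without carrying it out; it is true (the fixed-point map preserves $X$ by your structural identity, and $X$ is closed under a.e.\ limits), but it is the one step of your plan requiring real, if routine, verification, whereas the paper's translation-invariance argument needs no new existence result at all.
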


In view of this result,  the scheme on the spatial grid
\eqref{FullyDiscNumSch1} will inherit the results for the scheme
\eqref{defNumSch} given in Theorems \ref{thm:existuniqueNumSch},
\ref{thm:propertiesscheme},
\ref{thm:propertiesscheme:contractive}--\ref{thm:conv}. 


\begin{theorem} \label{thm:fullydisc}
Assume  \eqref{asNumSch}, \eqref{gas}, \eqref{u_0as}, and the stencils
 $\mathcal S_1,\mathcal S_2\subset \Grid$.
\begin{enumerate}[{\rm (a)}]
\item \label{eu} \textup{(Existence/uniqueness)}   There exists a unique
  solution $U_\beta^j$ of \eqref{FullyDiscNumSch1} such that
  \begin{equation*}
  \sum_{j\in\J}\sum_{\beta}|U_\beta^j|<+\infty.
  \end{equation*}
\end{enumerate}
Let $U_\beta^j,V_\beta^j$ be solutions of the scheme
\eqref{FullyDiscNumSch1} with data $u_0,f$ and $v_0,g$ respectively. 
\begin{enumerate}[{\rm (a)}]
\addtocounter{enumi}{1}
\item \textup{(Monotonicity)}   If
  $U^0_\beta\leq V^0_\beta$ and $F^j_\beta\leq G^j_\beta$, then
  $U_\beta^{j}\leq V_\beta^{j}$.
\item  \textup{($L^1$-stability)}  
  $\displaystyle \sum_\beta |U_\beta^j|\leq \sum_\beta |U_\beta^0|+
  \sum_{l=1}^j\sum_\beta |F_\beta^l| \Delta t_{l}$.
\item  \textup{($L^\infty$-stability)}  
  $\displaystyle \sup_\beta |U_\beta^j|\leq \sup_\beta |U_\beta^0|+
  \sup_\beta \sum_{l=1}^j|F_\beta^l| \Delta t_{l}$.
\item\textup{(Conservativity)}  If $\varphi_1^h$  satisfy \eqref{philipas}, 
$\displaystyle \sum_\beta U_\beta^j=\sum_\beta U_\beta^0+ \sum_{l=1}^j
  \sum_\beta F_\beta^l\Delta t_{l}.$
\item \textup{($L^1$-contraction)}  \label{thm:fullydisc:contractive}
  $\displaystyle 
\sum_\beta (U_\beta^j-V_\beta^j)^+
\leq \sum_\beta (U_\beta^0-V_\beta^0)^+ + \sum_{l=1}^j\sum_\beta
(F_\beta^l-{G}^l_\beta)^+ \Delta t_{l}.$
\item \textup{(Equicontinuity in time)}
   \label{thm:fullydisc:eqconttime}
  If
  \eqref{AsUnifLe} holds, then for all compact sets $K\subset \R^N$, 
\begin{equation*}
\begin{split}
  &h^N\sum_{x_\beta \in \Grid\cap K} |U_\beta^j-U_\beta^{j-k}|\leq
  \Lambda_K(t_j-t_{j-k})+|K|\int_{t_{j-k}}^{t_j}\|f(\cdot,\tau)\|_{L^\infty(\R^N)}\dd
  \tau.
\end{split}
\end{equation*}
 
\end{enumerate}
\vspace{-0.2cm}
Assume in addition that  $\Delta
t=o_h(1)$, and for all $h>0$, let
$U_{\beta}^j$ be the solution of a consistent scheme
\eqref{FullyDiscNumSch1}   satisfying \eqref{asNumSch} and \eqref{AsUnifLe}.
\begin{enumerate}[{\rm (a)}]
  \addtocounter{enumi}{7}
\item \label{thm:fullydisc:conv} \textup{(Convergence)} There
exists a unique distributional solution $u\in L^1(Q_T)\cap
L^\infty(Q_T)\cap C([0,T];L_\textup{loc}^1(\R^N))$ of \eqref{E}
such that for all compact sets $K\subset \R^N$, 
\begin{equation*}
\vertiii{U-u}_{K}:=\max_{t_j\in\GridT}\left\{\sum_{x_\beta \in \Grid\cap K}\int_{x_\beta+R_h} |U_{\beta}^j-u(x,t_j)|\dd x \right\} \to 0 \quad  \text{as} \quad h\to0^+.
\end{equation*}
\end{enumerate}
\end{theorem}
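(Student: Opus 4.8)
The whole theorem will be obtained by transporting the already-established results for the grid-free scheme \eqref{defNumSch} (Theorems \ref{thm:existuniqueNumSch}, \ref{thm:propertiesscheme}, and \ref{thm:propertiesscheme:contractive}--\ref{thm:conv}) to the fully discrete scheme \eqref{FullyDiscNumSch1} through the equivalence in Proposition \ref{prop:equivalence}. The only computational ingredient needed throughout is the dictionary relating discrete sums to $L^p$-norms of piecewise constant interpolants: for any grid function $W_\beta$ with interpolant $\overline{W}=\sum_\beta \indik_{x_\beta+R_h}W_\beta$ one has $\|\overline{W}\|_{L^1(\R^N)}=h^N\sum_\beta|W_\beta|$, $\|\overline{W}\|_{L^\infty(\R^N)}=\sup_\beta|W_\beta|$, $\int_{\R^N}\overline{W}\,\dd x=h^N\sum_\beta W_\beta$, $(\overline{W})^+=\overline{W^+}$ a.e., and $\|\overline{W}\|_{L^1(K')}=h^N\sum_{x_\beta\in\Grid\cap K}|W_\beta|$, where $K':=\bigcup_{x_\beta\in\Grid\cap K}(x_\beta+R_h)$. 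With this dictionary every line of the theorem is a rewriting of a grid-free estimate.

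For the existence/uniqueness claim (a) I would first apply Theorem \ref{thm:existuniqueNumSch} to \eqref{defNumSch} with the piecewise constant data $\overline{U^0},\overline{F^j}$, obtaining a unique semi-discrete solution $U_h^j\in L^1(\R^N)\cap L^\infty(\R^N)$; by Proposition \ref{prop:equivalence}(a) a version of it is constant on each cell, so $U_\beta^j:=U_h^j(x_\beta)$ solves \eqref{FullyDiscNumSch1}, and since $h^N\sum_\beta|U_\beta^j|=\|U_h^j\|_{L^1(\R^N)}<\infty$ for each of the finitely many $j\in\J$, the required summability $\sum_{j\in\J}\sum_\beta|U_\beta^j|<\infty$ holds. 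Conversely, any summable solution of \eqref{FullyDiscNumSch1} has an interpolant lying in $L^1\cap L^\infty$ (summability yields both finiteness of the $L^1$-norm and boundedness) which solves \eqref{defNumSch} by Proposition \ref{prop:equivalence}(b); two such must then coincide by the grid-free uniqueness, forcing $U_\beta^j=\tilde U_\beta^j$.

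Properties (b)--(g) are immediate transfers: interpolate the discrete data, invoke the corresponding grid-free estimate (monotonicity, $L^1$/$L^\infty$-stability, conservativity from Theorem \ref{thm:propertiesscheme}, contraction from Theorem \ref{thm:propertiesscheme:contractive}, time-equicontinuity from Theorem \ref{thm:timereg}), and rewrite it with the dictionary; order and sign pass through interpolation since $U^0_\beta\le V^0_\beta$ gives $\overline{U^0}\le\overline{V^0}$ a.e., and likewise for the sources, while the equicontinuity modulus $\Lambda_K$ is unchanged because cell-averaging is $L^1$/$L^\infty$-nonexpansive and commutes with translations along $\Grid$, so it does not enlarge $\lambda_{u_0,f}$ or $M_{u_0,f}$. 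The one point requiring genuine care is matching the source so that the discrete sums $\sum_{l}\Delta t_l\sum_\beta|F_\beta^l|$ come out \emph{sharp}: the grid-free bounds are phrased through a continuous $f$, but the dynamics only see the time-averages $\overline{F^l}$, so I would feed into the grid-free theorems the piecewise-constant-in-time-and-space source $\tilde f(\cdot,\tau):=\overline{F^l}$ for $\tau\in(t_{l-1},t_l]$, whose time-averages reproduce $\overline{F^l}$ exactly and for which $\int_0^{t_j}\|\tilde f(\cdot,\tau)\|_{L^1(\R^N)}\,\dd\tau=h^N\sum_{l\le j}\Delta t_l\sum_\beta|F_\beta^l|$.

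Finally, convergence (h) follows from Theorem \ref{thm:conv} once one observes that $\vertiii{U-u}_K$ is precisely a grid-point evaluation of the grid-free error. At each node $t_j$ the piecewise-linear-in-time interpolant satisfies $\Ut_h(\cdot,t_j)=U_h^j=\overline{U^j}$ by \eqref{xt-interp} and Proposition \ref{prop:equivalence}, whence $\sum_{x_\beta\in\Grid\cap K}\int_{x_\beta+R_h}|U_\beta^j-u(x,t_j)|\,\dd x=\|\Ut_h(\cdot,t_j)-u(\cdot,t_j)\|_{L^1(K')}$; taking the maximum over $t_j$ bounds $\vertiii{U-u}_K$ by $\sup_{t\in[0,T]}\|\Ut_h(\cdot,t)-u(\cdot,t)\|_{L^1(K')}$, which tends to $0$ by Theorem \ref{thm:conv} applied to the equivalent semi-discrete scheme (whose cell-averaged data $\overline{U^0},\overline{F^j}$ converge to $u_0,f$ in $L^1$, as that proof requires, so the limit is identified as the unique distributional solution of \eqref{E}). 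I expect the main, though minor, obstacle to be the bookkeeping around $K'$: since the cells meeting $K$ cover a slightly larger set, I would fix one compact $\tilde K\supset K'$ valid for all small $h$ and apply the $C([0,T];L^1_{\textup{loc}})$-convergence on $\tilde K$. All the genuine analytic content sits in the grid-free theorems, so beyond this translation the argument is routine.
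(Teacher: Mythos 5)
Your treatment of parts (a)--(g) is essentially the paper's own proof: Proposition \ref{prop:equivalence} plus the dictionary $\int_{\R^N}\ol{V}\dd x=h^N\sum_{\beta}V_\beta$ turns each grid-free estimate into its discrete counterpart, and your device of feeding in the piecewise-constant-in-time source $\tilde f(\cdot,\tau):=\ol{F^l}$ on $(t_{l-1},t_l]$ is exactly the right way to make the source terms come out sharp (the paper leaves this implicit). Your argument for (a) --- summability gives an interpolant in $L^1\cap L^\infty$, then grid-free uniqueness --- is also correct.

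Part (h), however, has a genuine gap. Theorem \ref{thm:conv} is stated and proved for the scheme \eqref{defNumSch} whose data are the \emph{fixed} functions $u_0$ and $f$; the semi-discrete scheme equivalent to \eqref{FullyDiscNumSch1} has data $\ol{U^0}$, $\ol{F^j}$ that vary with $h$. So ``Theorem \ref{thm:conv} applied to the equivalent semi-discrete scheme'' is not a legitimate citation, and your parenthetical claim that its proof only needs $L^1$-convergence of the data is an assertion that would itself require reworking Section 4: for instance, Lemma \ref{spaceRegNumSch} yields the modulus $\lambda_{u_0,f}$ only because the data are fixed (cell averaging commutes with translations along $\Grid$, but \emph{not} with arbitrary translations $\xi$, so for the averaged data the modulus acquires an extra $\lambda_{u_0,f}(\sqrt N\,h)$ term and the compactness argument must be adjusted), and step 3 of the proof of Lemma \ref{lem:conv} needs $U_h^0\to u_0$ in $L^1_\textup{loc}$. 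All of this can be carried out, but it is not free. The paper closes the gap with a two-line argument for which you already have every tool: let $U_h^j$ be the solution of \eqref{defNumSch} with the \emph{exact} data $u_0$, $F^j$, to which Theorem \ref{thm:conv} applies as stated, and control the data-perturbation error uniformly in $j$ by the $L^1$-contraction (Theorem \ref{thm:propertiesscheme:contractive}):
\[
\|\ol{U^j}-U_h^j\|_{L^1(\R^N)}\le \|\ol{U^0}-u_0\|_{L^1(\R^N)}+\sum_{l=1}^{j}\Delta t_l\,\|\ol{F^l}-F^l\|_{L^1(\R^N)}\le \lambda_{u_0,f}(\sqrt N\,h)\longrightarrow 0,
\]
by Jensen's inequality and continuity of $L^1$-translations. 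Then, with your $K'$,
\[
\vertiii{U-u}_{K}\le \max_{t_j\in\GridT}\|\ol{U^j}-U_h^j\|_{L^1(K')}+\sup_{t\in[0,T]}\|\Ut_h(\cdot,t)-u(\cdot,t)\|_{L^1(K')}\longrightarrow 0\quad\text{as}\quad h\to0^+,
\]
the second term vanishing by Theorem \ref{thm:conv}. This keeps Theorem \ref{thm:conv} as a black box with fixed data and is the route you should take.
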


\begin{remark}
Parts (a)--(g) can be formulated in terms of the space interpolant $\ol{U^j}$, e.g. the $L^1$-contraction in part \eqref{thm:fullydisc:contractive} then becomes
$$\int_{\R^N} (\ol{U^j}-\ol{V^j})^+ \dd x
\leq  \int_{\R^N}(u_{0}-v_{0})^+\dd x+\int_0^{t_j}\int_{\R^N}(f-g)^+\dd x\dd \tau.$$
Moreover, convergence in \eqref{thm:fullydisc:conv} can be stated in
terms of space-time interpolants as
$$
\widetilde{\ol U}\to u\qquad \text{in}\qquad C([0,T];L^1_{\mathrm{loc}}(\R^N)).
$$ 
\end{remark}

The proofs of the above results can be found in Section \ref{proof:fullyDiscreteScheme}.


\subsection{Well-posedness for bounded distributional solutions}

Theorem \ref{thm:conv} implies the existence of bounded
distributional solutions solutions of \eqref{E}, and uniqueness has
been proved in \cite{DTEnJa17b}:

\begin{theorem}[Existence and uniqueness]\label{thm:exundistsoln}
Assume \eqref{phias}, \eqref{gas}, \eqref{u_0as}, and
\eqref{muas}. Then there exists a unique distributional solution $u$
of \eqref{E} such that 
$$u\in L^1(Q_T)\cap L^\infty(Q_T)\cap C([0,T];L_\textup{loc}^1(\R^N)).$$
\end{theorem}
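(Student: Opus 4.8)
The plan is to obtain uniqueness directly from the cited Theorem~\ref{unique} and existence as a corollary of the convergence result Theorem~\ref{thm:conv}. Since Theorem~\ref{unique} already gives at most one distributional solution in $L^1(Q_T)\cap L^\infty(Q_T)$, there is a fortiori at most one in the smaller class $L^1(Q_T)\cap L^\infty(Q_T)\cap C([0,T];L^1_{\textup{loc}}(\R^N))$. All that remains is to construct one solution in that class, and by Theorem~\ref{thm:conv} it suffices to exhibit, for the given $\Operator$ and $\varphi$, a single consistent scheme \eqref{defNumSch} satisfying \eqref{asNumSch} and \eqref{AsUnifLe}.

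First I would select the fully implicit scheme, which is the natural choice under the minimal hypothesis \eqref{phias}. Taking $\varphi_2^h\equiv0$ and $\Levy_2^h\equiv0$ eliminates the explicit part: the nonlinearity enters only through $\varphi_1^h=\varphi$, which satisfies \eqref{phias}, whereas $\varphi_2^h\equiv0$ trivially satisfies \eqref{philipas}. As $\nu_2^h\equiv0$, the condition \eqref{CFL} holds with no restriction on $\Delta t$, so any sequence with $\Delta t=o_h(1)$ is admissible. Choosing $\Operator_1=\Operator$ and $\Operator_2\equiv0$, parts \eqref{schCon:Eq} and \eqref{schCon:Non} of Definition~\ref{schCon} are immediate, and existence/uniqueness of the scheme solutions is granted by Theorem~\ref{thm:existuniqueNumSch}.

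The heart of the matter, and the step I expect to be the main obstacle, is the construction of the single discrete operator $\Levy_1^h=\Levy^h$ approximating the general operator $\Operator=L^\sigma+\Levymu$. It must simultaneously be a discrete class \eqref{nuas} operator, i.e.\ $\Levy^h=\Levy^{\nu^h}$ for a finite nonnegative symmetric measure $\nu^h$ concentrated on a stencil; be consistent in the sense of Definition~\ref{schCon}\,\eqref{schCon:Op}; and satisfy the uniform L\'evy bound \eqref{AsUnifLe}. I would build it by splitting $\Operator$ into its local part $L^\sigma$, its singular nonlocal part $\Levymu$ on $\{|z|\leq r\}$, and its bounded nonlocal tail $\Levymu$ on $\{|z|>r\}$. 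The first two are approximated by monotone second-order finite differences, which by the observation of \cite{DTEnJa17a} are themselves discrete L\'evy operators, while the bounded tail is handled by a midpoint-type quadrature with nonnegative weights. The sum is an operator $\Levy^{\nu^h}$ with $\nu^h$ a finite sum of symmetric point masses, hence of class \eqref{nuas}. Consistency follows from Taylor expansion controlled by the second moment $\int_{|z|\leq1}|z|^2\dd\mu(z)$ together with the finite tail mass $\int_{|z|>1}\dd\mu(z)$, both finite by \eqref{muas}; the same quantities bound $\int_{|z|>0}|z|^2\wedge1\dd\nu^h(z)$ uniformly in $h$, giving \eqref{AsUnifLe}. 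Concrete discretizations of this type, and the verification of these technical estimates for a completely general symmetric $\mu$ obeying only the minimal integrability \eqref{muas}, are carried out in \cite{DTEnJa17a} and the companion paper \cite{DTEnJa18b}; the one-dimensional scheme displayed in the Introduction is a prototype.

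With such a scheme in hand, Theorem~\ref{thm:conv} yields a distributional solution $u\in L^1(Q_T)\cap L^\infty(Q_T)\cap C([0,T];L^1_{\textup{loc}}(\R^N))$ of \eqref{E}, and combined with the uniqueness from Theorem~\ref{unique} this proves the theorem. The only genuinely delicate point is the construction and consistency of $\Levy^h$ for an arbitrary strongly singular $\mu$; once one admissible scheme exists, the existence assertion is automatic.
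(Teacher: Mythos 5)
Your proposal follows exactly the paper's own route: uniqueness comes from restricting Theorem~\ref{unique} to the smaller class, and existence comes from applying the convergence result Theorem~\ref{thm:conv} to one admissible consistent scheme satisfying \eqref{asNumSch} and \eqref{AsUnifLe}, whose construction for general $\sigma$, $\mu$ and merely continuous $\varphi$ is exactly what Section~\ref{sec:discNumExp} and the companion paper \cite{DTEnJa18b} supply. Your choice of the fully implicit scheme (so that \eqref{CFL} is vacuous and $\varphi_1^h=\varphi$ needs no Lipschitz approximation) is a correct and welcome filling-in of the detail the paper leaves implicit.
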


Another consequence of Theorem \ref{thm:conv} is that most of the a priori
results in Theorems \ref{thm:propertiesscheme},
\ref{thm:propertiesscheme:contractive}, \ref{thm:timereg} will be
inherited by the solution $u$ of \eqref{E}.

\begin{proposition}[A priori estimates]\label{propconstructeddistsol}
Assume \eqref{phias} and \eqref{muas}.
Let $u,v$ 
be the distributional solutions of \eqref{E} corresponding to $u_0,v_0$ and  $f,g$ satisfying \eqref{u_0as} and \eqref{gas} respectively. Then, for every $t\in[0,T]$:
\begin{enumerate}[{\rm (a)}]
\item \textup{(\textup{Comparison})}\label{propconstructeddistsol:comp} If $u_{0}(x)\leq v_0(x)$ and $f(x,t)\leq g(x,t)$, then $u(x,t)\leq v(x,t)$.
  \smallskip
\item \textup{(\textup{$L^1$-bound})}\label{propconstructeddistsol:intbound} 
$\|u(\cdot,t)\|_{L^1(\R^N)}\leq\|u_0\|_{L^1(\R^N)}+\int_0^t\|f(\cdot,\tau)\|_{L^1(\R^N)}\dd
  \tau$.
  \smallskip
\item
  \textup{(\textup{$L^\infty$-bound})}\label{propconstructeddistsol:bndbound}
  $\|u(\cdot,t)\|_{L^\infty(\R^N)}\leq
  \|u_0\|_{L^\infty(\R^N)}+\int_0^t\|f(\cdot,\tau)\|_{L^\infty(\R^N)}\dd
  \tau$.
\smallskip  
\item \textup{(\textup{$L^1$-contraction})}\label{propconstructeddistsol:contraction}
$$
\int_{\R^N}(u-v)^+(x,t)\dd x\leq \int_{\R^N}(u_{0}-v_{0})^+(x)\dd x+\int_0^t\int_{\R^N}(f-g)^+(x,\tau)\dd x\dd \tau.
$$
\item \textup{(\textup{Time regularity})}\label{propconstructeddistsol:timereg} For every $t, s\in[0,T]$ and every compact set $K\subset \R^N$, 
\begin{equation*}
\begin{split}
\|u(\cdot,t)-u(\cdot,s)\|_{L^1(K)}&\leq \Lambda_K(|t-s|)+|K|\int_s^t\|f(\cdot,\tau)\|_{L^\infty(\R^N)}\dd \tau.
\end{split}
\end{equation*}
\end{enumerate}
\end{proposition}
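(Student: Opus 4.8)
The plan is to realize $u$ and $v$ as limits of numerical solutions of a single convergent scheme and to pass the discrete a priori estimates of Theorems \ref{thm:propertiesscheme}, \ref{thm:propertiesscheme:contractive}, and \ref{thm:timereg} to the limit; this is exactly the mechanism by which these bounds are ``inherited'' by the distributional solution. Since a scheme \eqref{defNumSch} is determined by $\Levy_1^h,\Levy_2^h,\varphi_1^h,\varphi_2^h$ and not by the data, the \emph{same} scheme can be run on $(u_0,f)$ and on $(v_0,g)$, which is what makes the comparison and contraction estimates accessible.

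First I would fix one consistent scheme satisfying \eqref{asNumSch} and \eqref{AsUnifLe} with $\Delta t=o_h(1)$ for the given $\Operator$ and $\varphi$; a convenient choice is a fully implicit finite-difference quadrature discretization of $\Operator$, so that $\Levy_2^h\equiv0$, $\varphi_2^h\equiv0$, \eqref{CFL} is automatic, and $\varphi_1^h=\varphi$ (such schemes exist under \eqref{phias} and \eqref{muas}; see Section \ref{sec:discNumExp} and \cite{DTEnJa18b}). Running this scheme on the two data sets yields solutions $U_h^j,V_h^j$ whose piecewise-linear-in-time interpolants $\Ut_h,\Vt_h$ (cf.\ \eqref{xt-interp}) converge, by Theorem \ref{thm:conv}, in $C([0,T];L^1_{\textup{loc}}(\R^N))$ to distributional solutions; by uniqueness (Theorem \ref{unique}) these limits are precisely $u$ and $v$. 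Passing to a subsequence, the convergence is also pointwise a.e.

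Each item then follows by a limit passage. For \eqref{propconstructeddistsol:comp}, monotonicity (Theorem \ref{thm:propertiesscheme}\eqref{thm:propertiesscheme:monotone}) gives $U_h^j\le V_h^j$, hence $\Ut_h\le\Vt_h$ a.e., and the a.e.\ limit preserves the inequality. For \eqref{propconstructeddistsol:intbound}--\eqref{propconstructeddistsol:bndbound}, the stability bounds of Theorem \ref{thm:propertiesscheme}\eqref{thm:propertiesscheme:intstable}--\eqref{thm:propertiesscheme:bndstable} hold for the interpolant as well (it is a convex combination of two consecutive levels), uniformly in $h$; lower semicontinuity of the $L^1$-norm on each compact $K$ followed by monotone convergence $K\uparrow\R^N$ gives the global $L^1$-bound, while the uniform $L^\infty$-bound survives the a.e.\ limit. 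For \eqref{propconstructeddistsol:contraction}, convexity of $s\mapsto s^+$ transfers the discrete contraction of Theorem \ref{thm:propertiesscheme:contractive} to $\Ut_h-\Vt_h$, and continuity of $w\mapsto w^+$ on $L^1(K)$ lets me pass to the limit on each compact set before letting $K\uparrow\R^N$. For \eqref{propconstructeddistsol:timereg}, the modulus $\Lambda_K$ of \eqref{timeRegMod}--\eqref{timeRegConst} is already a supremum over $h$, hence uniform; choosing grid times $t_j\to t$, $t_{j-k}\to s$ as $h\to0^+$ (possible since $\Delta t=o_h(1)$) and passing the left-hand side of Theorem \ref{thm:timereg} to the limit in $L^1(K)$ yields the bound.

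The hard part will be the bookkeeping in these limit passages rather than any single deep estimate. The numerical convergence is only local in space, while \eqref{propconstructeddistsol:intbound} and \eqref{propconstructeddistsol:contraction} are global $L^1$-statements, so one must combine the $h$-uniform global discrete bounds with lower semicontinuity and an exhaustion $K\uparrow\R^N$; simultaneously, the continuous times $t,s$ must be matched to nearby grid times as $\Delta t\to0$, using the convexity of $(\cdot)^+$ (and of $|\cdot|$) to control the piecewise-linear interpolant between levels. The only other point to check is that the chosen scheme keeps the constant $C_K$ in \eqref{timeRegConst} finite and controlled by the limit data, which is immediate from \eqref{AsUnifLe} together with $\varphi_1^h\to\varphi$ locally uniformly and the $L^\infty$-bound on $u$.
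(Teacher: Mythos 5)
Your proposal is correct and follows essentially the same route as the paper: both arguments realize $u$ and $v$ as $C([0,T];L^1_{\textup{loc}}(\R^N))$ and a.e.\ limits of the interpolants $\Ut_h,\Vt_h$ of one fixed convergent scheme run on the two data sets (Theorem \ref{thm:conv} plus uniqueness), and then pass the discrete estimates of Theorems \ref{thm:propertiesscheme}, \ref{thm:propertiesscheme:contractive}, and \ref{thm:timereg} (resp.\ Lemma \ref{lem:timereginterpolant}) to the limit using Fatou/lower semicontinuity, a.e.\ convergence, and the $t_j\to t$ bookkeeping for the time integrals. The only cosmetic deviations are that you obtain part (a) directly from discrete monotonicity while the paper deduces it from the $L^1$-contraction (d), and that for (e) you match grid times by hand where the paper invokes the interpolant form of the time-regularity estimate; neither affects correctness.
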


See Section \ref{sec:proofAprioriDistSol} for the proofs. Note that since we do
not have full $L^1$-convergence of approximate solutions, we cannot conclude that we inherit mass conservation from Theorem
\ref{thm:propertiesscheme}
\eqref{thm:propertiesscheme:conservative}. The result is still true and 
a proof can be found in \cite{DTEnJa17a}.

\subsection{Some extensions}

\subsubsection*{More general schemes} The proofs and estimates obtained for solutions of \eqref{defNumSch} can be transferred to the more complicated scheme
$$
\begin{cases}
U_h^j(x)=U_h^{j-1}(x)+\Delta t_j\big(\sum_{k=1}^n\Levy_k^{h}[\varphi_k^h(U_h^j)](x)+\sum_{l=n+1}^m\Levy_l^{h}[\varphi_l^h(U_h^{j-1})](x)+F^j(x)\big)\\
U_h^0(x)=u_0(x)
\end{cases}
$$
where $n,m\in\N$ with $n\leq m$.

\subsubsection*{More general equations} A close examination of the
proof of Theorem 
\ref{thm:exundistsoln}, reveals that even if we omit Definition
\ref{schCon} \eqref{schCon:Eq}, we can still obtain {\em existence} for
$L^1\cap 
L^\infty$-distributional solutions of   
$$
\begin{cases}
\dell_tu -\Operator_1[\varphi_1(u)]-\Operator_2[\varphi_2(u)]=f\qquad\qquad&\text{in}\qquad Q_T,\\
u(x,0)=u_0(x) \qquad\qquad&\text{on}\qquad \R^N.
\end{cases}
$$ 
In fact, we could handle any finite sum of symmetric L\'evy operators
acting on different nonlinearities. In this case most of the
properties of the numerical method would still hold, but maybe not
convergence. To also have convergence, we need suitable uniqueness
results for the corresponding equation. At the moment, known results
like e.g. \cite{DTEnJa17a,DTEnJa18a}, or easy extensions of these, cannot cover this case.


\section{Examples of schemes}
\label{sec:discNumExp}

In this section, we present possible discretizations of $\Operator$ which satisfy all the properties needed to ensure convergence of the numerical scheme, that is, they satisfy Definitions \ref{schCon} and \ref{LevyInTheClass}.
We also test our numerical schemes on an interesting special case of \eqref{E}. All of these results (and many more) will be treated in detail in Section 4 in \cite{DTEnJa18b}; we merely include a short excerpt here for completeness.

The nonlocal operator $\Levy^\mu$ contains a singular and a nonsingular part. For $\psi\in C_\textup{c}^\infty(\R^N)$ and $r>0$,
\begin{equation*}
\begin{split}
\Levy^\mu[\psi](x)&=\textup{P.V.}  \int_{0<|z|\leq r}\big(\psi(x+z)-\psi(x)\big)\dd \mu(z)+\int_{|z|>r}\big(\psi(x+z)-\psi(x)\big)\dd \mu(z)\\
&=:\Levy_{r}^\mu[\psi](x)+\Levy^{\mu,r}[\psi](x).
\end{split}
\end{equation*}
In general we assume that $h\leq r=o_h(1)$ where $h$ is the discretization in space parameter. We will present discretizations for general measures $\mu$ and give the corresponding $L^1$ Local Truncation Error (LTE)   for the fractional Laplace case ($\dd\mu(z)=\frac{c_{N,\alpha}\dd z}{|z|^{N+\alpha}}$) to show the accuracy of the approximation. By $L^1$ LTE we mean here the quantity $\|\Operator[\psi]-\Levy^h[\psi]\|_{L^1(\R^N)}$.  

\subsection{Discretizations of the singular part $\Levy_{r}^\mu$}
We propose two discretizations:

\subsubsection*{Trivial discretization} Discretize $\Levy_{r}^\mu$ by
$\Levy^h\equiv 0$. This discretization has all the required
properties, and an $O(r^{2-\alpha})$ LTE in the case of the fractional Laplacian.

\subsubsection*{Adapted vanishing viscosity discretization} For general radially symmetric measures, the discretization takes the form
\begin{equation}\label{eq:dissing3}
\Levy^h[\psi](x):=\frac{1}{2N}\int_{|z|<r}|z|^2 \dd\mu(z)\sum_{i=1}^N\frac{\psi(x+e_ih)+\psi(x-e_ih)-2\psi(x)}{h^2}.
\end{equation}
It can be shown that the LTE is $O(r^2+h^2)$ for a general measure $\mu$ and $O(r^{4-\alpha}+h^2r^{2-\alpha})$ in the fractional Laplace case. We refer to \cite{DTEnJa18b} for the general form of \eqref{eq:dissing3} when the measure is not radially symmetric.

\subsection{Discretization of the nonsingular part $\Levy^{\mu,r}$} 

For fixed $r>0$ these discretizations will approximate
  zero order integro-differential  operators. For simplicity we
  restrict to the uniform-in-space grid $\Grid$ and quadrature rules
  defined from interpolation.
  Let $\{p_\beta\}_{\beta\in\Z^N}$ be an interpolation basis
for $\Grid$, i.e. $\sum_{\beta} p_{\beta}(x)\equiv1$ for all $x\in\R^N$ and
$p_\beta(z_\gamma)=1$ for $\beta=\gamma$ and $0$ for
$\beta\neq\gamma$. Define the corresponding interpolant of a function $\psi$ as
$I_h[\psi](z):=\sum_{\beta\neq0}\psi(z_\beta)p_\beta(z)$.

\subsubsection*{Midpoint Rule:} This corresponds to
$p_\beta(x)=1_{x_\beta+R_h}(x)$.  We approximate $\Levy^{\mu,r}$ by 
\begin{equation}\label{quad_formula}
\begin{split}
\Levy^h[\psi](x)&:=\int_{|z|>r}I_h[\psi(x+\cdot)-\psi(x)](z)\dd \mu(z)\\
&=\sum_{{|z_\beta|>r}} \left(\psi(x+z_\beta)-\psi(x)\right)\int_{|z|>r} p_\beta(z)\dd \mu(z).
\end{split}
\end{equation}
Here $\int_{|z|>r}
p_\beta(z)\dd \mu(z)=\mu \left(\left(z_\beta+R_h\right)\cap\{|z|>r\}\right)$.    
The discretization is convergent for general measures
$\mu$, and in the fractional Laplace case the LTE is
$O(r^{2-\alpha}+h)$.

\subsubsection*{Multilinear   interpolation:}
Take $p_\beta$ to be piecewise linear basis functions in one dimension, and define them in a tensorial way in higher dimensions. This gives a positive interpolation.   Again we approximate $\Levy^{\mu,r}$ by \eqref{quad_formula}.
The discretization converges for general measures $\mu$ and the LTE is
$O(h^{2}r^{-\alpha})$ in the fractional Laplace case. 
\smallskip

\subsubsection*{Higher order Lagrange interpolation:} Take $p^k_\beta$
to be the Lagrange polynomials of order $k$, defined in a tensorial way in higher dimensions. Even if $p^k_\beta$ may
take negative values for $k\geq2$, it 
is known that $\int_{\R^N} p_\beta^k(x)\dd x\geq0$ for $k\leq7$
(cf. Newton-Cotes quadratures rules). For measures $\mu$ which are 
absolutely continuous with respect to the Lebesgue measure $\dd z$
with density (also) called $\mu(z)$, we approximate $\Levy^{\mu,r}$ by
\begin{equation*}
\begin{split}
\Levy^h[\psi](x)&:=\int_{|z|>r}I_h[(\psi(x+\cdot)-\psi(x))\mu(\cdot)](z)\dd z\\
&=\sum_{|z_\beta|>r} \big(\psi(x+z_\beta)-\psi(x)\big)\mu(z_\beta)\int_{|z|>r} p^k_\beta(z)\dd z.
\end{split}
\end{equation*} 
By choosing $r=r(h)$ in a precise way, different orders of convergence
can be obtained. This discretization can also be combined with
\eqref{eq:dissing3} to further improve the orders of accuracy. In the
best case, the LTE is shown to be $O(h^{\frac{7}{12}(4-\alpha)})$ in
the fractional Laplace case.

\subsection{Second order discretization of the fractional Laplacian}
Let  
$
\Delta_h\psi(x)=\frac{1}{h^2}\sum_{i=1}^N\big(\psi(x+e_ih)+\psi(x-e_ih)-2\psi(x)\big)
$
and define the $\frac\alpha 2$-power of $\Delta_h$ as
\begin{align*}\label{discfractlapN}
(-\Delta_h)^{\frac{\alpha}{2}}[\psi](x):=\frac{1}{\Gamma(-\frac{\alpha}{2})}\int_0^\infty\left(\e^{t\Delta_h}\psi
  (x)-\psi(x)\right)\frac{\dd t}{t^{1+\frac{\alpha}{2}}}.
\end{align*}
In general, we have
$(-\Delta_h)^{\frac{\alpha}{2}}[\psi](x)=\sum_{\beta\not=0}(\psi(x+z_\beta)-\psi(x))K_{\beta,h}$
with $
K_{\beta,h}:=\frac{1}{h^\alpha}\frac{1}{\Gamma(-\frac{\alpha}{2})}\int_0^\infty G(\beta,t)\frac{\dd t}{t^{1+\frac{\alpha}{2}}}$
and $G(\beta,t):=\e^{-2Nt}\prod_{i=1}^N I_{|\beta_i|}\left(2t\right)$
where $I_m$ denotes the modified Bessel function of first kind and order $m\in
\N$. Here $G\geq0$ is the Green function of the discrete
Laplacian in $\R^N$, and hence the weights $K_{\beta,h}$ are
positive. We improve the convergence rates of \cite{CiRoStToVa18} from $O(h^{2-\alpha})$ to $O(h^2)$ (independently on $\alpha$) and extend their consistency result to dimensions higher
than one.

See \cite{CuDTG-GPa18,DTEnJa18b} for further numerical details and also \cite{LiRo18} for more information about the operator $(-\Delta_h)^{\frac{\alpha}{2}}$ in $\R^N$.

\subsection{Discretization of local operators} We approximate $L=\Delta$ by
\begin{equation*}
\Levy^h[\psi](x):=\sum_{i=1}^N \frac{\psi(x+h e_i)+\psi(x-he_i) - 2\psi(x) }{h^2}.
\end{equation*}
The discretization is known to have $O(h^2)$ LTE.
 Note that general operators
 $L^\sigma=\textup{tr}(\sigma\sigma^TD^2\cdot)$ can always be 
reduced to $\Delta_{\R^M}$ for some $M\leq N$ after a
 change of variables. A
 direct discretization of $L^\sigma$ is given by 
$$
\Levy^{h,\eta}[\psi](x)=\sum_{i=1}^M \frac{I_h[\psi](x+\eta\sigma_i)+I_h[\psi](x-\eta\sigma_i) - 2\psi(x) }{\eta^2},
$$
where $I_h$ denotes the first order Lagrange interpolation on $\Grid$ (see
e.g. \cite{CaFa95} and \cite{DeJa13a,DeJa13b}). 
In this case the LTE is $O(\frac{h^2}{\eta^2}+\eta^2)$ or $O(h)$ with optimal
choice $\eta=\sqrt h$. See \cite{DTEnJa18b} for
further details.

\subsection{Numerical experiment}
As an illustration, we solve numerically a case where \eqref{E}
correspond to a one phase Stefan problem (see
e.g. \cite{BrChQu12}). We take
$\Operator=-(-\Delta)^{\frac{\alpha}{2}}$, $\alpha\in(0,2)$,
$\varphi(\xi)=\max\{0,\xi-0.5\}$, and
$f\equiv0$. The solution is plotted in Figure \ref{StefanSol} (below) for
$\alpha=1$ and initial data
$u_0(x)=\e^{-\frac{1}{4-x^2}}\mathbf{1}_{[-2,2]}(x)$. Note that even
for smooth initial data, the solution seems 
not to be smooth after some time. For a slightly different
Stefan type nonlinearity, we use the midpoint rule to
obtain $L^1$- and $L^\infty$-errors for different values of
$\alpha\in(0,2)$. See Figure \ref{StefanSolComp} (below). Due to
the nonsmoothness of the solutions, the convergence rates in $L^1$ are
better than in $L^\infty$. More details on one dimensional (and also on two dimensional) experiments can be found in \cite{DTEnJa18b}.


\begin{figure}[h!]
\includegraphics[width=\textwidth]{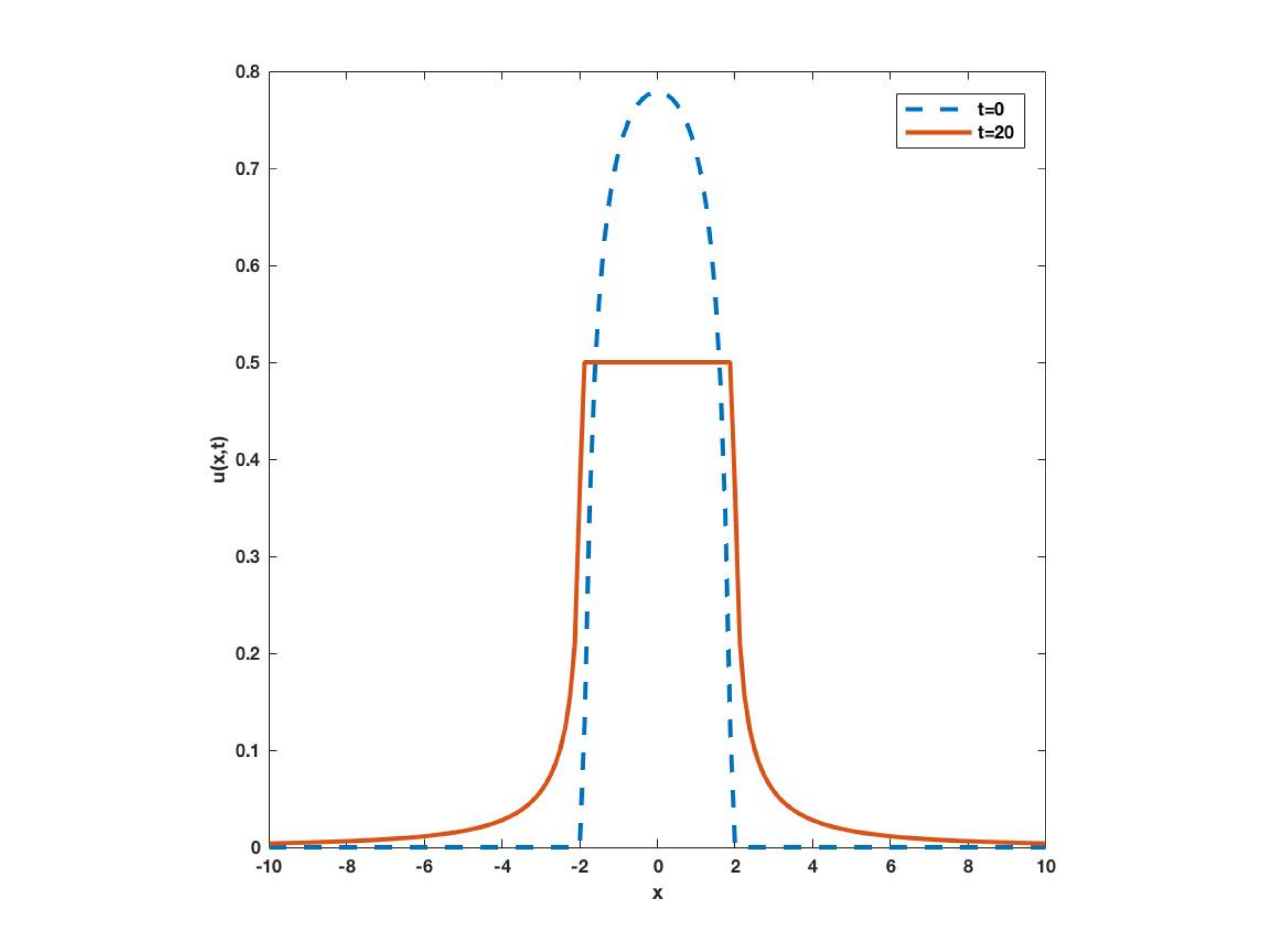}
\caption{\footnotesize The solution of a fractional Stefan problem with $\varphi(\xi)=\max\{0,\xi-0.5\}$.} 
\label{StefanSol}
\end{figure}

\begin{figure}[h!]
\includegraphics[width=\textwidth]{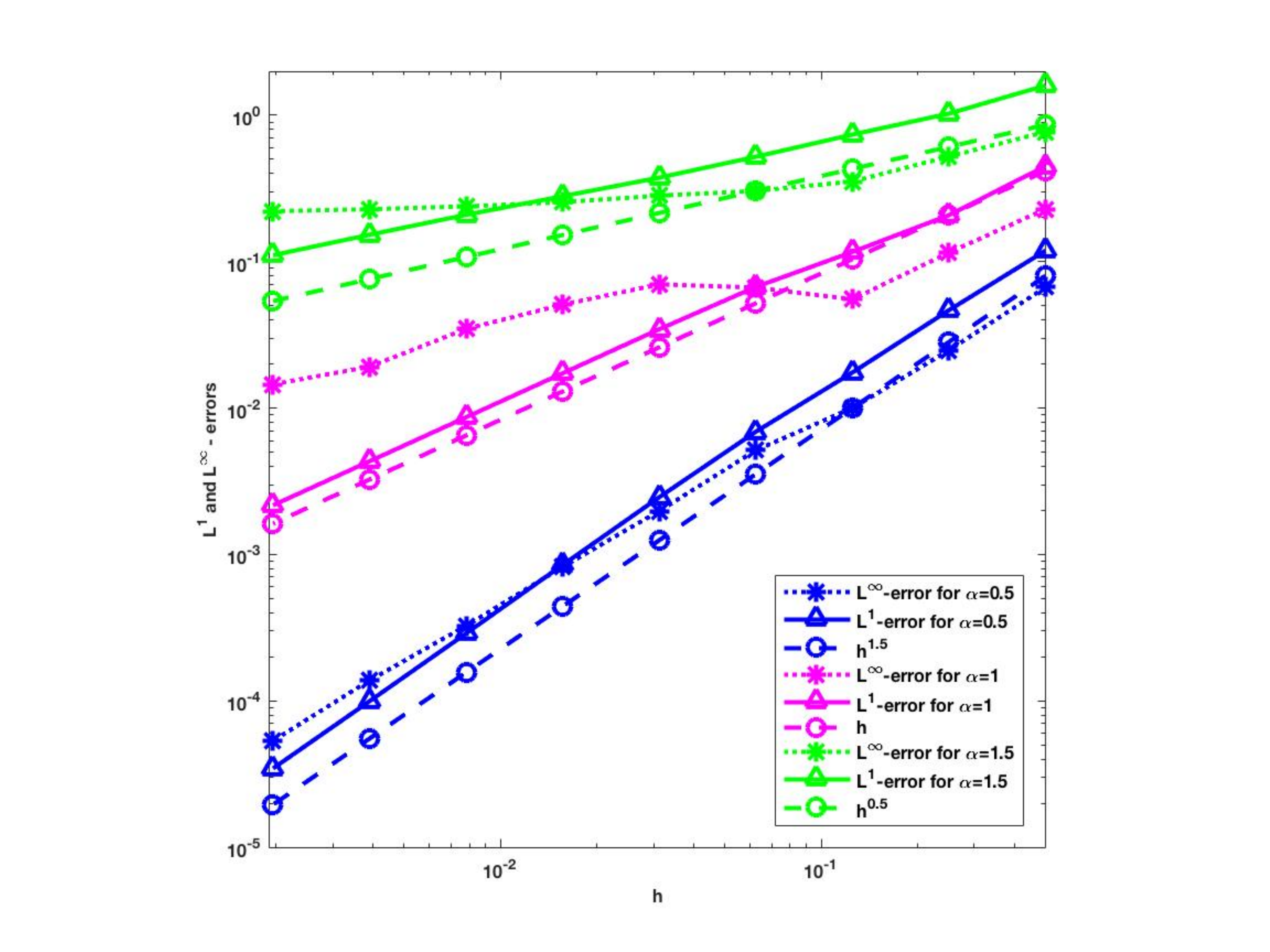}
\caption{\footnotesize The corresponding $L^1$- and $L^\infty$-errors using the Midpoint Rule.} 
\label{StefanSolComp}
\end{figure}


\section{Proofs of main results}

The scheme \eqref{defNumSch} can be seen as an operator splitting method
with alternating explicit and implicit steps. The explicit step is
given by the operator
\begin{equation}\label{OpT}\tag{$\Te$}
\Te[\psi](x):=\psi(x)+\Levynu[\varphi(\psi)](x) \qquad\text{for}\qquad
x\in \R^N, 
\end{equation}
while the implicit step is given by the operator
\begin{equation}\label{OpTi}\tag{$\Ti$}
\Ti[\rho](x):=w(x)\qquad \text{for}\qquad \R^N,
\end{equation} 
where $w$ is the solution of the nonlinear elliptic equation
\begin{equation}\label{EllipP}\tag{EP}
w(x)-\Levynu[\varphi(w)](x)=\rho(x)\qquad \text{in}\qquad \R^N,
\end{equation}
We can then write the scheme \eqref{defNumSch} in the following way:
\begin{equation}\label{eq:NSexpimp}
U_h^j(x)=\Ti\Big[\Te[U_h^{j-1}]+\Delta t_jF^j\Big](x),
\end{equation}
where we take  $\nu=\Delta t_j \nu_2^h$, $\varphi=\varphi_2^h$  in \eqref{OpT}
and
$\nu=\Delta t_j \nu_1^h$,  $\varphi=\varphi_1^h$
in \eqref{OpTi}.
To study the properties of the scheme \eqref{defNumSch}, we are
reduced to study the properties of the operators $\Te$ and $\Ti$.

\subsection{Properties of the numerical scheme}
\label{sec:proofPropNumSch}
In this section we prove Theorems \ref{thm:propertiesscheme} and \ref{thm:propertiesscheme:contractive}.
  We start by analyzing the operators $\Te$ and $\Ti$.
By Fubini's theorem and simple computations, we have the following
result.
\begin{lemma}\label{Levynuwell-def}
\begin{enumerate}[{\rm (a)}]
\item If \eqref{nuas} holds, $p\in\{1,\infty\}$, and $\psi\in L^p(\R^N)$,
  then $\Levynu[\psi]$ is well-defined in $L^p(\R^N)$ and
\[
\|\Levynu[\psi]\|_{L^p(\R^N)}\leq 2\|\psi\|_{L^p(\R^N)}\nu(\R^N).
\]
\item If \eqref{nuas} holds and $\psi\in L^1(\R^N)$, then $\int_{\R^N}\Levynu[\psi]\dd x=0$.
\end{enumerate}
\end{lemma}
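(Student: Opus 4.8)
The plan is to reduce the operator to its symmetric difference form and then bound it directly, using Tonelli and Fubini to pass between the $x$- and $z$-integrations. The starting point is the observation that for a symmetric measure $\nu$ the first-order correction term in \eqref{deflevy} disappears: since $z\mapsto z\cdot D\psi(x)\indikator$ is odd and $\nu$ is symmetric, $\int z\cdot D\psi(x)\indikator\dd\nu(z)=0$. Hence for $\nu$ in the class \eqref{nuas} one has the simpler representation
$$
\Levynu[\psi](x)=\int_{\R^N\setminus\{0\}}\big(\psi(x+z)-\psi(x)\big)\dd\nu(z),
$$
which no longer involves $D\psi$ and so makes sense for merely $L^p$ functions $\psi$. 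This is the crucial structural simplification; without it one could not bound the operator using only $\|\psi\|_{L^p}$.

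For part (a) with $p=\infty$, I would argue pointwise: for a.e.\ $x$,
$$
|\Levynu[\psi](x)|\leq\int_{\R^N\setminus\{0\}}|\psi(x+z)-\psi(x)|\dd\nu(z)\leq 2\|\psi\|_{L^\infty(\R^N)}\,\nu(\R^N),
$$
using $\nu(\R^N)<\infty$. For $p=1$, I would apply Tonelli's theorem to the nonnegative integrand $|\psi(x+z)-\psi(x)|$ and swap the order of integration,
$$
\int_{\R^N}\int_{\R^N\setminus\{0\}}|\psi(x+z)-\psi(x)|\dd\nu(z)\dd x=\int_{\R^N\setminus\{0\}}\Big(\int_{\R^N}|\psi(x+z)-\psi(x)|\dd x\Big)\dd\nu(z),
$$
then use translation invariance of Lebesgue measure to bound the inner integral by $2\|\psi\|_{L^1(\R^N)}$, yielding $2\|\psi\|_{L^1(\R^N)}\,\nu(\R^N)$. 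This double estimate simultaneously shows that the defining integral converges for a.e.\ $x$ (so $\Levynu[\psi]$ is well-defined) and gives the claimed $L^1$-bound.

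For part (b), the finiteness established above justifies applying Fubini's theorem (rather than merely Tonelli) to the signed integrand:
$$
\int_{\R^N}\Levynu[\psi]\dd x=\int_{\R^N\setminus\{0\}}\Big(\int_{\R^N}\psi(x+z)\dd x-\int_{\R^N}\psi(x)\dd x\Big)\dd\nu(z).
$$
By translation invariance the inner difference vanishes for every $z$, so the whole integral is zero. The only point requiring care throughout is the justification of the interchange of integrals, which is exactly what the $L^1$-estimate in part (a) supplies; beyond that the argument is elementary.
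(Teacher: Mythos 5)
Your proof is correct and takes essentially the same route as the paper, whose entire proof is the remark ``By Fubini's theorem and simple computations'': symmetry and finiteness of $\nu$ reduce $\Levynu$ to $\Levynu[\psi](x)=\int_{\R^N\setminus\{0\}}(\psi(x+z)-\psi(x))\dd\nu(z)$, and Tonelli/Fubini together with translation invariance of Lebesgue measure give the $L^p$-bounds and the vanishing integral. Your write-up simply supplies the details the paper leaves implicit.
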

Hence if \eqref{nuas} and \eqref{phias} hold, then $\Te$ is a
well-defined operator on $L^\infty(\R^N)$, and if $\varphi(\psi)\in L^1(\R^N)$,
then $\int \Te[\psi]\dd x=\int \psi \dd x$. For the operator $\Ti$ we
have the following result:
\begin{theorem}\label{maincor}
Assume \eqref{nuas} and \eqref{phias}. If $\rho\in L^1(\R^N)\cap
L^\infty(\R^N)$, then there exists a unique a.e.-solution $\Ti[\rho]=w\in
L^1(\R^N)\cap L^\infty(\R^N)$ of \eqref{EllipP}. 
\end{theorem}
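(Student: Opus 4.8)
First I would unwind the definition of $\Levynu$ under \eqref{nuas}. Writing $k:=\nu(\RN)<\infty$ and $J[\psi](x):=\int_{\RN\setminus\{0\}}\psi(x+z)\dd\nu(z)$, we have $\Levynu[\psi]=J[\psi]-k\psi$, where by Lemma \ref{Levynuwell-def} and the nonnegativity of $\nu$ the operator $J$ is order preserving with $\|J[\psi]\|_{L^p}\leq k\|\psi\|_{L^p}$ for $p\in\{1,\infty\}$. Assuming $\varphi(0)=0$ (Remark \ref{addingconstants}), the equation \eqref{EllipP} becomes
\[
\Psi(w)=\rho+J[\varphi(w)],\qquad \Psi(s):=s+k\varphi(s).
\]
Since $\varphi$ is continuous and nondecreasing, $\Psi:\R\to\R$ is a continuous \emph{strictly} increasing bijection with $\Psi(0)=0$, and because $\Psi(s_2)-\Psi(s_1)\geq s_2-s_1$ for $s_1<s_2$, its inverse $\Psi^{-1}$ is nondecreasing and $1$-Lipschitz with $\Psi^{-1}(0)=0$. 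Hence $w$ solves \eqref{EllipP} if and only if $w=\Psi^{-1}\!\big(\rho+J[\varphi(w)]\big)$, a formulation in which the (possibly very singular) part $k\varphi$ of the operator has been absorbed into the Lipschitz map $\Psi^{-1}$.

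\textbf{Uniqueness, comparison and $L^1$-contraction via a nonlocal Kato inequality.} I would prove these first, as they will also drive the existence proof. Let $w_1,w_2\in L^1(\RN)\cap L^\infty(\RN)$ solve \eqref{EllipP} with data $\rho_1,\rho_2$, set $\Phi:=\varphi(w_1)-\varphi(w_2)\in L^\infty$ and $S:=\mathbf 1_{\{w_1>w_2\}}$. Subtracting the equations gives $\Levynu[\Phi]=(w_1-w_2)-(\rho_1-\rho_2)\in L^1$, so testing against $S$ is legitimate. Using that $\nu$ is symmetric and translation invariance of the integral, one symmetrizes
\[
\int_{\RN}\Levynu[\Phi]\,S\dd x=-\tfrac12\int_{\RN}\!\int_{\RN\setminus\{0\}}\big(\Phi(x+z)-\Phi(x)\big)\big(S(x+z)-S(x)\big)\dd\nu(z)\dd x .
\]
The integrand is pointwise nonnegative: since $\varphi$ is nondecreasing, $S(y)=1\Rightarrow\Phi(y)\geq0$ and $S(y)=0\Rightarrow\Phi(y)\leq0$, so whenever $S(x+z)\neq S(x)$ the two factors carry the same sign. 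Thus $\int\Levynu[\Phi]\,S\leq 0$, and the tested identity yields $\int_{\RN}(w_1-w_2)^+\leq\int_{\RN}(\rho_1-\rho_2)^+$. This gives comparison, the $L^1$-contraction $\|w_1-w_2\|_{L^1}\leq\|\rho_1-\rho_2\|_{L^1}$, and (taking $\rho_1=\rho_2$) uniqueness. The one technical point here is that $S$ must be replaced by $\eta_\delta'$ for a smooth convex approximation $\eta_\delta$ of $(\cdot)^+$, passing $\delta\to0^+$ at the end; this is routine since $\Phi\in L^\infty$ and $\Levynu[\Phi]\in L^1$.

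\textbf{Existence for Lipschitz $\varphi$ by a genuine contraction.} When $\varphi$ is globally Lipschitz with constant $L_\varphi$ (Remark \ref{addingconstants}), I would solve the fixed-point problem in the variable $v=\varphi(w)$: the equation is $v=S[v]$ with $S[v]:=\theta\big(\rho+J[v]\big)$ and $\theta:=\varphi\circ\Psi^{-1}$. A direct computation from $\Psi^{-1}(a)+k\theta(a)=a$ shows $\theta$ is nondecreasing and Lipschitz with constant $L_\varphi/(1+kL_\varphi)$ and $\theta(0)=0$. Hence $S$ maps $L^1(\RN)$ into itself and is a strict contraction,
\[
\|S[v_1]-S[v_2]\|_{L^1}\leq \frac{L_\varphi}{1+kL_\varphi}\,\|J[v_1]-J[v_2]\|_{L^1}\leq \frac{kL_\varphi}{1+kL_\varphi}\,\|v_1-v_2\|_{L^1},
\]
with factor strictly below $1$. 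Banach's theorem gives a unique $v\in L^1$; the $L^\infty$ bound $\|v\|_{L^\infty}$ follows from $\|J\|_{L^\infty\to L^\infty}\leq k$, and $w:=\Psi^{-1}(\rho+J[v])\in L^1(\RN)\cap L^\infty(\RN)$ solves \eqref{EllipP}, with $\|w\|_{L^\infty}\leq\|\rho\|_{L^\infty}$ by the comparison step applied to constant super/subsolutions $\pm\|\rho\|_{L^\infty}$ (for which $\Levynu$ vanishes).

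\textbf{Existence for general continuous $\varphi$, and the main obstacle.} For merely continuous nondecreasing $\varphi$ the cancellation of $k=\nu(\RN)$ makes the fixed-point map only \emph{non-expansive}, so Banach's theorem is unavailable; this is the crux of the proof. I would instead approximate $\varphi$ by globally Lipschitz nondecreasing $\varphi_n\to\varphi$ locally uniformly with $\varphi_n(0)=0$, solve $w_n-\Levynu[\varphi_n(w_n)]=\rho$ by the previous step, and pass to the limit. The uniform bounds $\|w_n\|_{L^\infty}\leq\|\rho\|_{L^\infty}$ and $\|w_n\|_{L^1}\leq\|\rho\|_{L^1}$ come from the comparison step (comparing $w_n$ with the zero solution for zero data). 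Crucially, since $\nu$ does not depend on $x$, $w_n(\cdot+\xi)$ solves \eqref{EllipP} with data $\rho(\cdot+\xi)$, so the $L^1$-contraction gives the uniform equicontinuity $\|w_n-w_n(\cdot+\xi)\|_{L^1}\leq\|\rho-\rho(\cdot+\xi)\|_{L^1}$; together with uniform $L^1\cap L^\infty$ bounds and standard tightness (again from comparison, localizing $\rho$) this yields $L^1$-compactness by the Kolmogorov--Riesz theorem. Extracting $w_n\to w$ in $L^1$ and using local uniform convergence of $\varphi_n$, the uniform $L^\infty$ bound, and $\nu(\RN)<\infty$ to pass to the limit in $w_n=\Psi_n^{-1}(\rho+J[\varphi_n(w_n)])$, one obtains a solution $w\in L^1(\RN)\cap L^\infty(\RN)$ of \eqref{EllipP}, i.e. $\Ti[\rho]=w$ is well-defined. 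I expect the delicate points to be exactly (i) the sign analysis and regularization in the Kato inequality when $\varphi$ is non-Lipschitz (e.g. fast diffusion, where $\varphi(w)$ need not lie in $L^1$), and (ii) justifying the compactness of $\{w_n\}$ despite the borderline non-expansiveness, both of which are handled by leaning on the translation-invariant $L^1$-contraction rather than on a contraction in the unknown itself.
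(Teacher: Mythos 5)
Your existence argument is essentially sound and runs parallel to the paper's: the paper also absorbs the diagonal term by inverting $s\mapsto s+\nu(\R^N)\varphi(s)$ and runs a Banach fixed point for Lipschitz-type nonlinearities (Proposition \ref{firstex}), then treats general continuous $\varphi$ by Lipschitz approximation, uniform $L^1$/$L^\infty$ bounds, translation estimates from the $L^1$-contraction, and Kolmogorov--Riesz compactness. Your contraction in the variable $v=\varphi(w)$, with factor $\nu(\R^N)L_\varphi/(1+\nu(\R^N)L_\varphi)<1$, is in fact a slightly cleaner variant of Proposition \ref{firstex}, since it does not require the lower bound $\varphi'\geq 1/c$. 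One small defect: your claim of compactness in $L^1(\R^N)$ via ``tightness by localizing $\rho$'' is circular as stated (tightness of the localized solutions is just as unknown), but this is harmless, because $L^1_{\textup{loc}}$-convergence plus Fatou and dominated convergence, exactly as in the paper, suffices for the limit passage.

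The genuine gap is the uniqueness assertion for merely continuous $\varphi$. Your nonlocal Kato inequality hinges on symmetrizing $\iint\big(\Phi(x+z)-\Phi(x)\big)S(x)\,\dd\nu(z)\dd x$, i.e.\ on Fubini, which needs absolute convergence of the double integral. When $\varphi$ is globally Lipschitz one has $\Phi=\varphi(w_1)-\varphi(w_2)\in L^1(\R^N)$ (since $w_1-w_2\in L^1$), and the argument goes through --- this is precisely the paper's Lemma \ref{TContEllip}. But under \eqref{phias} alone (e.g.\ fast diffusion, $\varphi(s)=|s|^{m-1}s$ with $m<1$), $\Phi$ is only in $L^\infty$, the set $\{w_1>w_2\}$ may have infinite measure, and neither the symmetrization step nor the conservation identity $\int_{\R^N}\Levynu[\psi]\dd x=0$ (which requires $\psi\in L^1$, Lemma \ref{Levynuwell-def} (b)) is available. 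You flag this point yourself, but the proposed remedy --- ``leaning on the translation-invariant $L^1$-contraction'' --- is circular: the $L^1$-contraction for non-Lipschitz $\varphi$ is exactly what the Kato inequality was supposed to deliver, and it cannot be recovered by approximating $\varphi$, because uniqueness must apply to two \emph{arbitrary} solutions of the limit equation, not to constructed limits of approximations. The paper does not attempt this route at all: in Step 4 of its proof it verifies that any a.e.-solution in $L^1\cap L^\infty$ is a distributional solution of \eqref{EllipP} (using self-adjointness of $\Levynu$), and then invokes the nontrivial uniqueness theorem for bounded distributional solutions of \eqref{NAgenEllipP} proved in \cite{DTEnJa17b} (quoted here as Theorem \ref{NAuniquemuEP}). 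Without that external result, or a genuinely new argument replacing it, your proof of Theorem \ref{maincor} is incomplete precisely at its uniqueness claim.
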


We now list the remaining properties of $\Te$ and $\Ti$ that we
use in this section.

\begin{theorem}\label{propOpT:proplimsolnofEllipP}
 Assume   \eqref{nuas}, $\phi,\hat{\phi}\in L^1(\R^N)\cap
L^\infty(\R^N)$, and either
$$
\text{\eqref{philipas}\ \ and\ \ $\Lvar
  \nu(\R^N)\leq1$}\ \ \text{for}\ \
\Te\quad\qquad\text{or}\quad\qquad\text{\eqref{phias}}\ \ \text{for}\ \ \Ti,
$$
where
$\Lvar:=\sup_{|\zeta|\leq\max\{\|\phi\|_{L^\infty},\|\hat\phi\|_{L^\infty}\}}|\varphi'(\zeta)|$.

Whether $T=\Te$ or $T=\Ti$, it then follows that
\begin{enumerate}[{\rm (a)}]
\item \textup{(Comparison)} if $\phi\leq \hat{\phi}$ a.e., then $T[\phi]\leq
  T[\hat{\phi}]$ a.e.;
  \smallskip
\item  \textup{($L^1$-contraction)}
$\int_{\R^N}(T[\phi](x)-T[\hat{\phi}](x))^+\dd
  x\leq\int_{\R^N}(\phi(x)-\hat{\phi}(x))^+\dd x$;
  \smallskip
\item \textup{($L^1$-bound)} $\|T[\phi]\|_{L^1(\R^N)}\leq
  \|\phi\|_{L^1(\R^N)}$; and
  \smallskip
\item \textup{($L^\infty$-bound)} $\|T[\phi]\|_{L^\infty(\R^N)}\leq \|\phi\|_{L^\infty(\R^N)}$.
\end{enumerate}
\end{theorem}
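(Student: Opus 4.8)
The plan is to treat the $L^1$-contraction (b) as the central estimate: comparison (a) is its special case $\phi\le\hat\phi$ a.e.\ (forcing $(T[\phi]-T[\hat\phi])^+\equiv0$), the $L^1$-bound (c) is the case $\hat\phi=0$ once one checks $\Te[0]=0$ and $\Ti[0]=0$ (the latter by uniqueness in Theorem~\ref{maincor}), while the $L^\infty$-bound (d) I would prove separately by comparison with constant sub/supersolutions. I would reach (b) by two routes adapted to the two operators: for $\Te$ I prove monotonicity (a) directly and upgrade it to (b) through the Crandall--Tartar lemma, whereas for $\Ti$ I obtain (b) at once from a Kato-type integration.

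For the explicit operator $\Te[\phi]=\phi+\Levynu[\varphi(\phi)]$ the monotonicity (a) is elementary. Writing $\Te[\phi](x)=g(\phi(x))+\int_{\R^N}\varphi(\phi(x+z))\dd\nu(z)$ with $g(s):=s-\nu(\R^N)\varphi(s)$, assumption \eqref{philipas} together with the CFL-type bound $\Lvar\nu(\R^N)\le1$ makes $g$ nondecreasing; since $\varphi$ is nondecreasing too, $\phi\le\hat\phi$ forces $\Te[\phi]\le\Te[\hat\phi]$ term by term. Because $\varphi$ is Lipschitz here we also have $\varphi(\phi)\in L^1$, so Lemma~\ref{Levynuwell-def}(b) yields the conservation identity $\int_{\R^N}\Te[\phi]\dd x=\int_{\R^N}\phi\dd x$, and Crandall--Tartar converts monotonicity plus conservation into (b). The same representation gives (d) at once: if $|\phi|\le M:=\|\phi\|_{L^\infty}$, then $\Te[\phi]\le g(M)+\nu(\R^N)\varphi(M)=M$ and symmetrically $\Te[\phi]\ge-M$.

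For the implicit operator the contraction is where the real work lies. With $w=\Ti[\rho]$, $\hat w=\Ti[\hat\rho]$ solving \eqref{EllipP}, subtracting gives
\begin{equation*}
(w-\hat w)-\Levynu\big[\varphi(w)-\varphi(\hat w)\big]=\rho-\hat\rho .
\end{equation*}
I would multiply by a smooth nondecreasing approximation of $\sgn^+(w-\hat w)$ and integrate. The zeroth-order term yields (in the limit) $\int_{\R^N}(w-\hat w)^+\dd x$, the right-hand side is controlled by $\int_{\R^N}(\rho-\hat\rho)^+\dd x$, and the nonlocal term must be shown to have the correct sign. Exploiting the symmetry of $\nu$ to transfer $\Levynu$ onto the multiplier, $\int\Levynu[B]\,\sgn^+(w-\hat w)\dd x=\int B\,\Levynu[\sgn^+(w-\hat w)]\dd x$ with $B:=\varphi(w)-\varphi(\hat w)$; since $\varphi$ is nondecreasing, $B$ carries the sign of $w-\hat w$, while $\Levynu[\mathbf 1_{\{w>\hat w\}}]\le0$ on $\{w>\hat w\}$ and $\ge0$ on its complement, so the integrand is pointwise nonpositive. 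This Kato / Stroock--Varopoulos inequality delivers (b) directly, hence (a) for $\Ti$. For (d) I would use that, as $\Levynu$ annihilates constants, $M:=\|\rho\|_{L^\infty}$ is a supersolution of \eqref{EllipP}; testing the resulting inequality against $\sgn^+(w-M)$ over the finite-measure set $\{w>M\}$ yields $w\le M$, and likewise $w\ge-M$.

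The step I expect to be the main obstacle is making the $\Ti$ computation rigorous under only \eqref{phias}, i.e.\ with $\varphi$ merely continuous rather than Lipschitz. One must regularise $\sgn^+$, justify the self-adjoint transfer of $\Levynu$, and ensure the nonlocal double integral converges without any Lipschitz control on $\varphi$; for this I would lean on $\Levynu[\varphi(w)-\varphi(\hat w)]=(w-\hat w)-(\rho-\hat\rho)\in L^1$, the boundedness of $\varphi$ on the range of the (bounded) solutions, and the reduction of $\{w>\hat w\}$ and $\{w>M\}$ to sets of finite measure, before passing to the limit in the regularisation. The explicit operator $\Te$, by contrast, requires none of this and is entirely direct.
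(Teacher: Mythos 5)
Your treatment of $\Te$ is correct and essentially the paper's own proof: monotonicity from the CFL condition (the paper phrases it via the mean value theorem where you use monotonicity of $g(s)=s-\nu(\R^N)\varphi(s)$, which is the same estimate), then conservativity from Lemma \ref{Levynuwell-def}~(b) plus the Crandall--Tartar argument for the contraction, and comparison with constants for the $L^\infty$-bound. Your $L^\infty$-bound for $\Ti$ by comparison with the constant supersolution $M=\|\rho\|_{L^\infty}$ also goes through, because for $M>0$ the set $\{w>M\}$ has finite measure ($w\in L^1$ and Chebyshev), so the Fubini transfer onto the sharp indicator $\mathbf 1_{\{w>M\}}$ and the pointwise sign argument are legitimate there.

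The genuine gap is the $L^1$-contraction for $\Ti$ under \eqref{phias} alone --- exactly the step you flag as the main obstacle --- and the tools you list do not close it. The Kato computation needs either (i) $(\varphi(w)-\varphi(\hat w))^+\in L^1(\R^N)$, which is precisely what the Lipschitz hypothesis buys and is how the paper proves its Lemma \ref{TContEllip} (valid only under a global Lipschitz assumption, via the pointwise convex inequality and conservativity); or (ii) a multiplier for which sign structure and integrability hold simultaneously. Your multiplier $\sgn^+(w-\hat w)$ fails on both counts at once: $\{w>\hat w\}$ need \emph{not} have finite measure (only $\{w-\hat w>\delta\}$ does, since $(w-\hat w)^+\in L^1$), and after truncating to $\{w-\hat w>\delta\}$ the transition strip $\{0<w-\hat w\le\delta\}$, where $\varphi(w)-\varphi(\hat w)\ge0$ but $\Levynu$ of the indicator is also $\ge0$, contributes an error bounded only by $\nu(\R^N)\,\|(w-\hat w)^+\|_{L^1}\,\omega(\delta)/\delta$, with $\omega$ the modulus of continuity of $\varphi$ on the relevant compact range; this does not vanish as $\delta\to0$ for non-Lipschitz $\varphi$ (fast-diffusion nonlinearities are admitted by \eqref{phias}). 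Switching the multiplier to $\sgn^+(\varphi(w)-\varphi(\hat w))$ restores the sign (the Stroock--Varopoulos symmetrization applies, and integrability holds since uniform continuity puts $\{\varphi(w)-\varphi(\hat w)>\epsilon\}$ inside a finite-measure set), but then the zeroth-order term only controls $\int(w-\hat w)^+$ over $\{\varphi(w)>\varphi(\hat w)\}$, and the degenerate set $\{w>\hat w,\ \varphi(w)=\varphi(\hat w)\}$ --- which \eqref{phias} allows to be large, e.g.\ Stefan-type flat pieces --- is lost entirely.

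This is not a removable technicality: contraction for merely continuous $\varphi$ is of essentially the same depth as the uniqueness Theorem \ref{NAuniquemuEP}, which the paper imports from \cite{DTEnJa17b} rather than reproves. The paper's actual route for $\Ti$ is structurally different from yours: it regularizes the nonlinearity, $\varphi_\delta=\varphi\ast\omega_\delta-(\varphi\ast\omega_\delta)(0)+\delta\,\zeta$, proves (a)--(d) for the approximate problems \eqref{ellipap}, where $\varphi_\delta$ is Lipschitz and nondegenerate so that Lemma \ref{TContEllip}, Corollary \ref{propEllipP} and Lemma \ref{propEllipPbounded} apply, then uses the compactness built into the existence proof (Theorem \ref{maincor}) together with the uniqueness Theorem \ref{NAuniquemuEP} to identify the a.e.\ limit of $w_\delta$ with $\Ti[\rho]$, and finally transfers the estimates to the limit by Fatou's lemma and a.e.\ convergence. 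To repair your proof of (a)--(c) for $\Ti$ you should adopt this approximation-and-stability structure rather than attempt the direct Kato argument.
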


The proofs of Theorems \ref{maincor} and
\ref{propOpT:proplimsolnofEllipP} will be given in Section \ref{sec:auxresults}. 

\begin{remark}
Note that $\Lvar
\nu(\R^N)\leq1$ is a CFL-condition yielding monotonicity/comparison
for the scheme.
\end{remark}

We are now ready to prove a priori, $L^1$-contraction, existence, and
uniqueness results for the numerical scheme \eqref{defNumSch}.

\begin{proof}[Proof of Theorem \ref{thm:propertiesscheme}]

\noindent\eqref{thm:propertiesscheme:monotone} Note that $U^0_h\leq
V^0_h$ and $F^j\leq G^j$. 
If
$U^{j-1}_h\leq V^{j-1}_h,$ then by Theorem
\ref{propOpT:proplimsolnofEllipP} (a),
\begin{align*}
  &(\Te[U_h^{j-1}]+\Delta t_jF^j) - (\Te[V_h^{j-1}]+\Delta t_jG^j)\\
  &=(\Te[U_h^{j-1}]-\Te[V_h^{j-1}]) + \Delta t_j(F^j-G^j)\leq0
\end{align*}
and thus, by \eqref{eq:NSexpimp} and Theorem
\ref{propOpT:proplimsolnofEllipP} (a) again,
\begin{equation*}
U^j_h-V^j_h= \Ti\Big[\Te[U_h^{j-1}]+\Delta t_jF^j\Big] -  \Ti\Big[\Te[V_h^{j-1}]+\Delta t_jG^j\Big]\leq0.
\end{equation*}
Since $U^0_h- V^0_h\leq0$, part (a) follows by induction.

\medskip
\noindent\eqref{thm:propertiesscheme:intstable}--\eqref{thm:propertiesscheme:bndstable} 
Let $X$ be either $L^1(\R^d)$ or $L^\infty(\R^d)$. By Theorem \ref{propOpT:proplimsolnofEllipP} (c) or (d),
\begin{equation*}
\begin{split}
\|U_h^j\|_{X}&=\Big\|\Ti\Big[\Te[U_h^{j-1}]+\Delta t_jF^j\Big]\Big\|_{X}\leq \|\Te[U_h^{j-1}]+\Delta t_jF^j]\|_{X}\\
&\leq\|U_h^{j-1}\|_{X}+\Delta t_j\|F^j\|_{X}.
\end{split}
\end{equation*}
Then we iterate $j$ down to zero to get
$
\|U_h^j\|_{X}\leq  \|U_h^{0}\|_{X}+\sum_{l=1}^{j}\|F^l\|_{X}\Delta t_l
$, and by the definition of $F^l$, 
\begin{equation*}
\begin{split}
&\sum_{l=1}^{j}\|F^l\|_{X}\Delta t_l=\sum_{l=1}^{j}\bigg\|\frac{1}{\Delta t_l}\int_{t_{l-1}}^{t_l}f(x,\tau)\dd \tau\bigg\|_{X}\Delta t_l
\leq\int_{0}^{t_j}\|f(\cdot,\tau)\|_{X}\dd \tau.
\end{split}
\end{equation*}

\medskip
\noindent\eqref{thm:propertiesscheme:conservative} Since
$\varphi^h_i$ is locally Lipschitz, now
$\varphi^h_i(U_h^j),\varphi^h_i(V_h^j)\in L^1$. The result then follows
from integrating \eqref{defNumSch} in $x$, iterating $j$ down to zero,
and using that the integral of nonsingular L\'evy operators acting on
integrable functions is zero (Lemma \ref{Levynuwell-def} (b)). This completes the proof.
\end{proof}

\begin{proof}[Proof of Theorem \ref{thm:propertiesscheme:contractive}]
By two applications of Theorem \ref{propOpT:proplimsolnofEllipP} (b),
\begin{equation*}
\begin{split}
&\int_{\R^N}(U_h^j-V_h^j)^+(x)\dd x\leq \int_{\R^N}(U_h^{j-1}-V_h^{j-1})^+(x)\dd x+\Delta t_j\int_{\R^N}(F^j-G^j)^+(x)\dd x.
\end{split}
\end{equation*}
Then we iterate $j$ down to zero to get
\begin{equation*}
\begin{split}
&\int_{\R^N}(U_h^j-V_h^j)^+(x)\dd x
\leq \int_{\R^N}(U_h^{0}-V_h^{0})^+(x)\dd x+\sum_{l=1}^{j}\Delta
t_l\int_{\R^N}(F^{l}-G^{l})^+(x)\dd x.
\end{split}
\end{equation*}
By the definition of $F^l$ and $G^l$, Jensen's inequality, and Tonelli's theorem,
\begin{equation*}
\begin{split}
\sum_{l=1}^{j}\Delta t_{l}\int_{\R^N}(F^{l}-G^{l})^+(x)\dd x&=\sum_{l=1}^{j}\Delta t_l\int_{\R^N}\bigg(\frac{1}{\Delta t_l}\int_{t_{l-1}}^{t_l}\big(f-g\big)(x,\tau)\dd \tau\bigg)^+\dd x\\
&\leq\int_0^{t_j}\int_{\R^N}\big(f(x,s)-g(x,s)\big)^+\dd x\dd s.
\end{split}
\end{equation*}
The proof is complete.
\end{proof}

We finish by proving existence of a unique solution of the numerical scheme.
\begin{proof}[Proof of Theorem \ref{thm:existuniqueNumSch}]
Proof by induction. Assume solutions $U_h^{i}\in L^1\cap L^\infty$ of \eqref{defNumSch} exists for
$i=1,\dots,j-1$. Then since $\rho= \Te[U_h^{j-1}]+\Delta t_jF^j\in L^1\cap L^\infty$ by Theorem \ref{propOpT:proplimsolnofEllipP} and \eqref{gas}, existence and uniqueness of an a.e.-solution $\Ti[\rho]= U_h^j\in L^1\cap L^\infty$ of \eqref{EllipP} follows by Theorem \ref{maincor}. 
In view of \eqref{eq:NSexpimp}, this $U_h^j$ is the unique a.e.-solution of
\eqref{defNumSch} at $t=t_j$. 
\end{proof}

The strategy for the remaining proofs is the following. We first prove
equiboundedness and equicontinuity results for the sequence of
interpolated solutions $\{\Ut_{h}\}_{h>0}$ of the scheme
\eqref{defNumSch} as $h\to0^+$. By Arzel\`a-Ascoli and Kolmogorov-Riesz type
compactness results, see e.g. the Appendix of \cite{DrEy16},
we conclude that there is a convergent subsequence in $C([0,T],
L^1_{\mathrm{loc}}(\R^N))$. We use consistency to prove that any such limit
must be the unique solution of \eqref{E}. Finally, by a standard
argument combining compactness and uniqueness of limit points, we
conclude that the full sequence must converge.

\subsection{Equicontinuity and compactness of the numerical scheme}

In this section we prove Theorem \ref{thm:timereg}, equicontinuity in
space, and compactness for the scheme.
Since $\Ut_{h}$ is the interpolation of $U_h$
defined in \eqref{xt-interp}, we will prove the equiboundedness and
-continuity first for $U_h^j$ and then transfer these results to $\Ut_{h}$.  
The equiboundedness is a direct corollary of Theorem \ref{thm:propertiesscheme} \eqref{thm:propertiesscheme:bndstable}.
\begin{lemma}[Equicontinuity in space]\label{spaceRegNumSch}
Assume \eqref{u_0as}, \eqref{gas}, and \eqref{asNumSch} hold for
all $h>0$, and let $\{U_h^j\}_{h>0}$ be a.e.-solutions of
\eqref{defNumSch}. Then, for all $j\in \J$, all compact sets
$K\subset \R^N$,  and all $\eta>0$,
\begin{equation*}
\begin{split}
&\sup_{|\xi|\leq\eta}\|U_h^j-U_h^j(\cdot+\xi)\|_{L^1(K)}\leq \lambda_{u_0,f}(\eta),
\end{split}
\end{equation*}
where
$\lambda_{u_0,f}$ is defined in \eqref{omegau0f}.
\end{lemma}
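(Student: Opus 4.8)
The plan is to exploit the translation-invariance of the discrete L\'evy operators together with the $L^1$-contraction property, reducing the spatial equicontinuity of $U_h^j$ to that of the initial data and the source term. The key observation is that the operators $\Levy_i^h=\Levy^{\nu_i^h}$ have stencils and weights independent of the spatial position $x$, so they commute with translations: if $\tau_\xi\psi:=\psi(\cdot+\xi)$, then $\Levy_i^h[\tau_\xi\psi]=\tau_\xi(\Levy_i^h[\psi])$, and likewise the pointwise composition with $\varphi_i^h$ commutes with $\tau_\xi$. Consequently, for each fixed $\xi$, the translated sequence $V_h^j:=U_h^j(\cdot+\xi)$ solves the \emph{same} scheme \eqref{defNumSch} but with translated data $\tau_\xi u_0$ and $\tau_\xi f$ (and translated $F^j$, by \eqref{sourceNumTime}). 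This is the conceptual heart of the argument and the step I expect to require the most care: one must verify that every ingredient of the scheme --- the operators, the nonlinearities, the implicit step $\Ti$ from \eqref{OpTi}--\eqref{EllipP}, and the definition of the averaged source --- is genuinely translation-commuting, so that $U_h^j(\cdot+\xi)$ is bona fide the solution associated to the shifted data.

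Once this identification is made, I would apply the $L^1$-contraction estimate of Theorem \ref{thm:propertiesscheme:contractive} to the pair $U_h^j$ and $V_h^j=\tau_\xi U_h^j$. Using the elementary fact that $\|a-b\|_{L^1}=\int(a-b)^++\int(b-a)^+$ and applying the contraction bound in both orderings (i.e. to $(U_h^j-V_h^j)^+$ and to $(V_h^j-U_h^j)^+$), one obtains
\begin{equation*}
\|U_h^j-\tau_\xi U_h^j\|_{L^1(\R^N)}\leq \|u_0-\tau_\xi u_0\|_{L^1(\R^N)}+\int_0^{t_j}\|(f-\tau_\xi f)(\cdot,\tau)\|_{L^1(\R^N)}\dd\tau.
\end{equation*}
The right-hand side is bounded by $\|u_0-u_0(\cdot+\xi)\|_{L^1(\R^N)}+\|f-f(\cdot+\xi,\cdot)\|_{L^1(Q_T)}$, and taking the supremum over $|\xi|\leq\eta$ yields exactly $\lambda_{u_0,f}(\eta)$ as defined in \eqref{omegau0f}.

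To finish, I would restrict the global $L^1(\R^N)$ bound to the compact set $K$, which only decreases the integral, giving the stated $L^1(K)$ estimate uniformly in $j$ and in $h$. Note that this argument is purely $h$-independent and uses no regularity of $\varphi_i^h$ beyond the monotonicity and contraction already established, so the bound holds for every $h>0$ with the same modulus $\lambda_{u_0,f}$. The main obstacle, as flagged above, is the careful bookkeeping to confirm translation-commutation through the implicit solve: since $\Ti$ is defined only implicitly via the elliptic problem \eqref{EllipP}, one argues that $\tau_\xi w$ solves the equation with datum $\tau_\xi\rho$ by translation-invariance of $\Levynu$, and then invokes uniqueness from Theorem \ref{maincor} to conclude $\Ti[\tau_\xi\rho]=\tau_\xi\Ti[\rho]$. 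With that lemma in hand the rest is a short and routine application of the already-proven contraction estimate.
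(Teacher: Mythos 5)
Your proposal is correct and follows essentially the same route as the paper's own proof: translation invariance of the scheme plus uniqueness identifies $U_h^j(\cdot+\xi)$ as the solution with data $u_0(\cdot+\xi)$, $f(\cdot+\xi,\cdot)$, and the $L^1$-contraction of Theorem \ref{thm:propertiesscheme:contractive} then gives the estimate, which is finally restricted to $K$. You merely spell out details the paper leaves implicit, namely applying the contraction in both orderings to control the full $L^1$-norm and verifying translation-commutation through the implicit step $\Ti$ via Theorem \ref{maincor}.
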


\begin{proof}
 By translation invariance and uniqueness, $U_h^j(x+\xi)$ is a solution
 of \eqref{defNumSch} with data $u_0(\cdot+\xi)$ and
 $f(\cdot+\xi,\cdot)$. Taking $V_h^j(x)=U_h^j(x+\xi)$ in the
 $L^1$-contraction Theorem \ref{thm:propertiesscheme:contractive} then
 concludes the estimate. Continuity of the $L^1$-translation and
 assumptions \eqref{u_0as} and \eqref{gas} shows that $\lim_{\eta\to0}\lambda_{u_0,f}(\eta)=0$.
\end{proof}

Under the additional assumption of having a consistent numerical scheme, 
\begin{equation}\label{condTimeRegVarphiBounded}
\sup_{h\in(0,1)}\|\varphi_i^h(U_h^j)\|_{L^\infty(\R^N)}<\infty
\end{equation}
and
$$\sup_{h\in(0,1)}\|\Levy_i^{h}[\psi]\|_{L^1(\R^N)}<\infty\qquad\textup{for all}\qquad \psi\in C_\textup{c}^\infty(\R^N)
$$
for $i=1,2$. The first bound is trivial, while the second follows since
$
\|\Levy_i^{h}[\psi]\|_{L^1(\R^N)}\leq
\|\Levy_i^{h}[\psi]-\Operator_i[\psi]\|_{L^1(\R^N)}+\|\Operator_i[\psi]\|_{L^1(\R^N)}
$
is bounded for $h\leq1$ by Definition \ref{schCon} \eqref{schCon:Op}.
These facts allow us to prove the time equicontinuity result Theorem
\ref{thm:timereg}.

\begin{proof}[Proof of Theorem \ref{thm:timereg}]
We exploit the idea, which is sometimes referred to as the Kru\v{z}kov interpolation lemma \cite{Kru69}, that an estimate on the $L^1$-translations in space will give an estimate on the $L^1$-translations in time. To simplify, we start by considering right-hand sides $f=0$.

The numerical scheme \eqref{defNumSch} can be written as
\[
U_h^j(x)-U_h^{j-1}(x)=\Delta t_j\left(\Levy^h_1[\varphi_1^h(U_h^j)](x)+\Levy^h_2[\varphi_2^h(U_h^{j-1})](x)\right).
\]
Let $\omega_\delta$ be a standard mollifier in $\R^N$ obtained by scaling from a fixed $\omega$, and define
$(U_h^{j})_\delta(x):=(U_h^j*\omega_\delta)(x)$. Taking the
convolution of the scheme with $\omega_\delta$ and using the fact that
the operator $\Levynu$ commutes with convolutions, we find that
\begin{equation*}
\begin{split}
(U_h^{j})_\delta(x)-(U_h^{j-1})_\delta(x)&=\Delta t_j\left(\Levy^h_1[\varphi_1^h(U_h^j)]+\Levy^h_2[\varphi_2^h(U_h^{j-1})]\right)\ast\omega_\delta(x)\\
&=\Delta t_j\left(\varphi_1^h(U_h^j)\ast\Levy^h_1[\omega_\delta](x)+\varphi_2^h(U_h^{j-1})\ast\Levy^h_2[\omega_\delta](x)\right).
\end{split}
\end{equation*}
We integrate over any compact set $K\subset\R^N$,  use Theorem \ref{thm:propertiesscheme}
 \eqref{thm:propertiesscheme:bndstable}, and
 \eqref{condTimeRegVarphiBounded}, and standard properties of
 mollifiers (see e.g. the proof of Lemma 4.3 in
\cite{DTEnJa17a}), to get
\begin{equation}\label{ApproxTimeRegularity}
\begin{split}
&\int_{K}\big|(U_h^{j})_\delta-(U_h^{j-1})_\delta\big|\dd x\\
&\leq\Delta t_j|K|\Big(\| \varphi_1^h(U_h^j)\|_{L^\infty}\|\Levy^h_1[\omega_\delta]\|_{L^1}+ \|\varphi_2^h(U_h^{j-1})\|_{L^\infty}  \|\Levy^h_2[\omega_\delta]\|_{L^1}\Big)\\
&\leq\Delta t_j|K|\Bigg(\sup_{|r|\leq M_{u_0,f}}\!\!|\varphi_1^h(r)|\,\|\Levy^h_1[\omega_\delta]\|_{L^1}+\sup_{|r|\leq M_{u_0,f}}\!\!|\varphi_2^h(r)|\, \|\Levy^h_2[\omega_\delta]\|_{L^1}\Bigg)\\
&\leq C_K\Delta t_j (1+\delta^{-2}),
\end{split}
\end{equation}
where $C_K=C_{K,u_0,f,\varphi_1,\varphi_2,\nu_1,\nu_2}$ is given by
\eqref{timeRegConst} with constant $c$ such that $c(1+\delta^{-2})$ is
 a uniform in $h$  upper bound on 
$\max_{i=1,2}\|\Levy^h_i[\omega_\delta]\|_{L^1}$. This upper bound follows
from \eqref{eq:WPL}, the uniform L\'evy condition \eqref{AsUnifLe}, and the properties of $\omega_\delta$:
\begin{equation*}
\begin{split}
\|\Levy^h_i[\omega_\delta]\|_{L^1}&\leq C\|D^2\omega_\delta\|_{L^1}
  \Big(\int_{|z|\leq1}|z|^2\dd\nu^h_i(z)\Big)+ 2\|\omega_\delta\|_{L^1}  \int_{|z|>1}\dd\nu^h_i(z)\\
  &\leq \delta^{-2} C\|D^2\omega\|_{L^1}
  \Big(\int_{|z|\leq1}|z|^2\dd\nu^h_i(z)\Big) +2\|\omega\|_{L^1}
  \int_{|z|>1}\dd\nu^h_i(z).
  \end{split}
\end{equation*}
By iterating \eqref{ApproxTimeRegularity} and using
Tonelli plus Theorem \ref{thm:propertiesscheme:contractive}, we obtain
\begin{equation*}
\begin{split}
\|U_h^{j}-U_h^{j-k}\|_{L^1(K)}&\leq  \|U_h^{j}-(U_h^{j})_\delta\|_{L^1(K)} + \|(U_h^{j})_\delta-(U_h^{j-k})_\delta\|_{L^1(K)}    \\
&\quad+   \|(U_h^{j-k})_\delta-U_h^{j-k}\|_{L^1(K)}\\
&\leq  2\sup_{|h|\leq\delta}\|u_0-u_0(\cdot+h)\|_{L^1(\R^N)}+C_K(t_j-t_{j-k})(1+\delta^{-2}),
\end{split}
\end{equation*}
Now we conclude by taking $\delta=(t_{j}-t_{j-k})^{\frac{1}{3}}$.

The proof for $f\not\equiv0$ follows in a similar way after using that 
\begin{align}\label{eq:IneqF}
\sum_{l=j-k+1}^{j}\|F^l\|_{L^\infty(\R^N)}\Delta t_l \leq\int_{t_{j-k}}^{t_j}\|f(\cdot,\tau)\|_{L^\infty(\R^N)}\dd
    \tau
\end{align}
for $F^l$ defined as in \eqref{sourceNumTime}.
\end{proof}

The equiboundedness, Lemma
\ref{spaceRegNumSch}, and Theorem \ref{thm:timereg} (plus  Theorems
\ref{thm:propertiesscheme} and \ref{thm:propertiesscheme:contractive})
immediately transfers, mutatis mutandis, to $\Ut_h$. We only restate
the (slightly modified) equicontinuity in time result for $\Ut_h$
here: 

\begin{lemma}[Equicontinuity in time]\label{lem:timereginterpolant}
Assume \eqref{gas}, \eqref{u_0as}, $\Delta t=o_h(1)$, and for all $h>0$, let $U^j_h$ be the solution of a consistent scheme \eqref{defNumSch} satisfying \eqref{asNumSch} and \eqref{AsUnifLe}. Then, for $t,s\in[0,T]$,
\begin{equation}\label{res:timereginterpolant}
\begin{split}
\|\Ut_h(\cdot,t)-\Ut_h(\cdot,s)\|_{L^1(K)} &\leq \Lambda_K(|t-s|)\\
&\quad+|K|\Big(\int_{s}^{t}\|f(\cdot,\tau)\|_{L^\infty(\R^N)}\dd\tau+\lambda(|t-s|,\Delta t)\Big),
\end{split}
\end{equation}
where $\lambda$ is continuous and satisfies
\begin{equation}\label{otdt}
\sup_{\Delta t\leq1}|\lambda(\delta,\Delta
t)|\stackrel{\delta\to0}{\longrightarrow}0,\quad\text{and for $\delta\in[0,T]$,}\quad\lambda(\delta,\Delta
  t)\stackrel{\Delta t\to0}{\longrightarrow}0.
\end{equation}
\end{lemma}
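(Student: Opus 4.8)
The plan is to transfer the discrete-in-time estimate of Theorem \ref{thm:timereg} (for $U_h^j$) to the piecewise linear interpolant \eqref{xt-interp} by a triangle-inequality argument. I fix the compact set $K$, abbreviate $\Phi(\tau):=\|f(\cdot,\tau)\|_{L^\infty(\R^N)}\in L^1(0,T)$, and assume without loss of generality $t>s$, with $s\in(t_{m-1},t_m]$ and $t\in(t_{n-1},t_n]$, so $n\ge m$ (the corner case $s=t_0=0$ is handled by comparing with $U_h^0$ and is routine). The only structural fact I use about the interpolant is that on each cell $\Ut_h(\cdot,\tau)=(1-\theta_j(\tau))U_h^{j-1}+\theta_j(\tau)U_h^{j}$, where $\theta_j(\tau)=(\tau-t_{j-1})/\Delta t_j\in[0,1]$ for $\tau\in(t_{j-1},t_j]$. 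The central idea is to compare $\Ut_h(\cdot,t)$ and $\Ut_h(\cdot,s)$ with carefully chosen \emph{grid} values, while \emph{keeping the interpolation weights} rather than bounding them by $1$; this is what prevents any loss of equicontinuity when $\Delta t$ is large compared with $|t-s|$.

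First I treat $n=m$ (both times in one cell), where $\Ut_h(\cdot,t)-\Ut_h(\cdot,s)=(\theta_n(t)-\theta_n(s))(U_h^n-U_h^{n-1})$ gives directly
\begin{equation*}
\|\Ut_h(\cdot,t)-\Ut_h(\cdot,s)\|_{L^1(K)}=\frac{t-s}{\Delta t_n}\|U_h^n-U_h^{n-1}\|_{L^1(K)}\le \frac{|t-s|}{\Delta t_n}\Big(\Lambda_K(\Delta t_n)+|K|\int_{t_{n-1}}^{t_n}\Phi\,\dd\tau\Big)
\end{equation*}
by Theorem \ref{thm:timereg} with $k=1$, and here $|t-s|\le\Delta t_n$. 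For $n>m$ I insert $U_h^{n-1}$ (the left node of $t$'s cell) and $U_h^{m}$ (the right node of $s$'s cell):
\begin{equation*}
\begin{split}
\|\Ut_h(\cdot,t)-\Ut_h(\cdot,s)\|_{L^1(K)}&\le \theta_n(t)\|U_h^n-U_h^{n-1}\|_{L^1(K)}+\|U_h^{n-1}-U_h^{m}\|_{L^1(K)}\\
&\quad+(1-\theta_m(s))\|U_h^m-U_h^{m-1}\|_{L^1(K)}.
\end{split}
\end{equation*}
These node choices are what make the estimate sharp: since $s\le t_m\le t_{n-1}<t$ we have $t_{n-1}-t_m\le t-s$ and $[t_m,t_{n-1}]\subseteq[s,t]$, so Theorem \ref{thm:timereg} applied to the indices $n-1\ge m$ bounds the middle term by $\Lambda_K(|t-s|)+|K|\int_s^t\Phi\,\dd\tau$, which are exactly the two leading terms of the claim. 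In the two outer terms I keep the weights: from $\theta_n(t)\,\Delta t_n=t-t_{n-1}\le t-s$ and $(1-\theta_m(s))\,\Delta t_m=t_m-s\le t-s$ I get $\theta_n(t)\le\min(1,|t-s|/\Delta t_n)$ and $1-\theta_m(s)\le\min(1,|t-s|/\Delta t_m)$, and each outer cell-jump is again estimated by Theorem \ref{thm:timereg} with $k=1$.

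It remains to gather the outer terms from both cases into one function and verify \eqref{otdt}. With $\omega_f(a):=\sup\{\int_A\Phi\,\dd\tau:A\subseteq[0,T],\ |A|\le a\}$, which tends to $0$ as $a\to0$ by absolute continuity of the integral, and using $\Delta t_n,\Delta t_m\le\Delta t$, every outer term is dominated by
\begin{equation*}
g(|t-s|,\Delta t):=2\sup_{0<a\le\Delta t}\min\Big(1,\tfrac{|t-s|}{a}\Big)\big(\Lambda_K(a)+|K|\,\omega_f(a)\big),
\end{equation*}
so I define $|K|\,\lambda(\delta,\Delta t):=g(\delta,\Delta t)$ (legitimate since $K$ is fixed, so $\lambda$ may depend on $K,u_0,f$); combined with the leading terms above this yields the stated bound, and in the case $n=m$ one simply notes $\Lambda_K(|t-s|)+|K|\int_s^t\Phi\ge0$. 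The second limit in \eqref{otdt} is immediate: for fixed $\delta$, $g(\delta,\Delta t)\le 2\big(\Lambda_K(\Delta t)+|K|\omega_f(\Delta t)\big)\to0$ as $\Delta t\to0$, since $\Lambda_K$ is a fixed modulus. For the first limit, write $h(a):=\Lambda_K(a)+|K|\omega_f(a)$ (nondecreasing, $h(0^+)=0$, bounded by $M:=h(1)$ on $[0,1]$) and split the supremum: for $a\le\delta$ the factor is $1$ so the bound is $h(\delta)$; for $\delta<a\le\sqrt\delta$ one has $h(a)\le h(\sqrt\delta)$; and for $\sqrt\delta<a\le1$ one has $\min(1,\delta/a)=\delta/a\le\sqrt\delta$, giving $\sqrt\delta\,M$. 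Hence $\sup_{\Delta t\le1}g(\delta,\Delta t)\le 2\max\{h(\sqrt\delta),\sqrt\delta\,M\}\to0$ as $\delta\to0$.

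The main obstacle is exactly this first limit in \eqref{otdt}. A naive triangle inequality that discards the interpolation weights would leave error terms of size $\Lambda_K(\Delta t_n)$ and $\Lambda_K(\Delta t_m)$, which do \emph{not} vanish as $|t-s|\to0$ when $\Delta t$ is held fixed and large, and this would break the equicontinuity needed for the Arzel\`a--Ascoli step. Retaining the weights produces the saving factor $\min(1,|t-s|/\Delta t_\bullet)$, and the elementary splitting at $a=\delta$ and $a=\sqrt\delta$ is precisely what turns this into a genuine joint modulus $\lambda$; once this is in place, the telescoping of the $f$-contributions to $\int_s^t\Phi$ plus an $\omega_f$-error, and the transfer of the remaining a priori estimates to $\Ut_h$, are routine.
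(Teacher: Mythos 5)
Your proof is correct, but it takes a genuinely different route from the paper's. The paper does not invoke Theorem \ref{thm:timereg} as a black box; it reruns that proof at the level of the interpolant: it mollifies $\Ut_h$ and uses the one-step estimate \eqref{ApproxTimeRegularity}, whose right-hand side $C_K\Delta t_j(1+\delta^{-2})$ is \emph{linear} in $\Delta t_j$, so that the interpolation weights $(t-t_{j-1})/\Delta t_j$ and $(t_{j-k}-s)/\Delta t_{j-k}$ cancel the time steps exactly and the telescoped sum equals $(t-s)C_K(1+\delta^{-2})$; a single optimization $\delta=(t-s)^{1/3}$ then produces $\Lambda_K(|t-s|)$ outright, so that for $f\equiv 0$ no correction term $\lambda$ appears at all. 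In the paper, $\lambda$ is purely a source-term effect: $\lambda=\int_s^t\big(\hat F_{\Delta t}(\tau)-\|f(\cdot,\tau)\|_{L^\infty(\R^N)}\big)\dd\tau$ with $\hat F_{\Delta t}$ the cell averages of $\tau\mapsto\|f(\cdot,\tau)\|_{L^\infty(\R^N)}$, and \eqref{otdt} is proved via $L^1$-convergence of these averages plus Vitali equi-integrability. You instead treat Theorem \ref{thm:timereg} as a black box; since the $\delta$-optimization has already been performed there cell by cell, your one-step jumps cost $\Lambda_K(\Delta t_j)$, which you rescue with the retained weights $\min(1,|t-s|/\Delta t_j)$ and absorb into the sup-based modulus $g$, verified by the $\sqrt{\delta}$-splitting. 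You correctly identify the crux (discarding the weights destroys equicontinuity for fixed large $\Delta t$); the paper sidesteps the same danger by cancelling the weights against the linear-in-$\Delta t_j$ mollified bound \emph{before} choosing $\delta$. What each approach buys: the paper's argument yields a cleaner and sharper $\lambda$ --- independent of $K$ and $u_0$, identically zero when $f\equiv0$ --- while yours is more modular: it never reopens the mollification machinery and applies verbatim to any scheme for which a discrete equicontinuity estimate of the form of Theorem \ref{thm:timereg} is available. Two small caveats. First, your $\lambda$ depends on $K$, $u_0$ and $f$ (through $\Lambda_K$ and $\omega_f$); the statement and its downstream uses (compactness, and Proposition \ref{propconstructeddistsol} \eqref{propconstructeddistsol:timereg}, where $K$ is fixed before any limit is taken) tolerate this, but it is a formally weaker conclusion than the paper's. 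Second, the asserted continuity of your $\lambda$ deserves a line: it holds because $\lambda_{u_0,f}$ and $\omega_f$ are nondecreasing, subadditive and vanish at $0^+$, hence uniformly continuous, which makes $g$ jointly continuous in $(\delta,\Delta t)$.
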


\begin{proof}
\noindent\textbf{1)} \textit{Assume $f=0$.} The proof of \eqref{res:timereginterpolant} is like the proof of Theorem \ref{thm:timereg} with a slightly modified end where the time interpolant \eqref{xt-interp} appears:
For $t\in(t_{j-1},t_j]$ and $s\in(t_{j-k-1},t_{j-k}]$, 
\begin{equation*}
\begin{split}
\|(\Ut_{h})_\delta(\cdot,t)-(\Ut_{h})_\delta(\cdot,s)\|_{L^{1}}
&\leq \big\|(\Ut_{h})_\delta(\cdot,t)-(U_{h}^{j-1})_\delta\|_{L^1}+\|(U_h^{j-k})_\delta-(\Ut_{h})_\delta(\cdot,s)\|_{L^1}.\\
&\quad+\sum_{l=j-k+1}^{j-1}\|(U^{l}_h)_\delta-(U^{l-1}_h)_\delta\|_{L^{1}}
\end{split}
\end{equation*}
Since by the definition of linear interpolation,
\begin{align*}
&\big\|(\Ut_{h})_\delta(\cdot,t)-(U^{j-1}_h)_\delta\|_{L^{1}}
\leq \frac{t-t_{j-1}}{\Delta
  t_j}\big\|(U^j_h)_\delta-(U^{j-1}_h)_\delta\big\|_{L^{1}},\\
&\|(U^{j-k}_h)_\delta-(\Ut_{h})_\delta(\cdot,s)\|_{L^{1}}\leq\frac{t_{j-k}-s}{\Delta t_{j-k}}\big\|(U^{j-k}_h)_\delta-(U^{j-k-1}_h)_\delta\big\|_{L^{1}},
\end{align*}
it follows by repeated use of \eqref{ApproxTimeRegularity} that
\begin{align*}
  \|(\Ut_{h})_\delta(\cdot,t)-(\Ut_{h})_\delta(\cdot,s)\|_{L^{1}}&\leq
  \Big((t-t_{j-1})+\sum_{l=j-k+1}^{j-1}\Delta t_{l}+(t_{j-k}-s)\Big)C_K
  (1+\delta^{-2})\\
  &= (t-s)C_K (1+\delta^{-2}).  
\end{align*}
At this point we can conlclude the proof as before when $f=0$.
\medskip

\noindent \textbf{2)} \textit{Assume $f\not\equiv0$.} From the proof of Theorem \ref{thm:timereg} with $f\not\equiv0$ and an inequality as \eqref{eq:IneqF}, we find that
\begin{equation*}
\begin{split}
\lambda(|t-s|,\Delta t):=\int_{s}^t\big(\hat{F}_{\Delta t}(\tau)-\|f(\cdot,\tau)\|_{L^\infty(\R^N)}\big)\dd \tau,
\end{split}
\end{equation*}
where the piecewise constant function $\hat{F}_{\Delta t}$ is defined from $\tau\mapsto\|f(\cdot,\tau)\|_{L^\infty(\R^N)}$ by averages:
\begin{equation*}
\hat{F}_{\Delta t}(\tau):= 
\frac{1}{\Delta
  t_l}\int_{t_{l-1}}^{t_l}\|f(\cdot,\tau')\|_{L^\infty(\R^N)}\dd \tau'
\qquad\text{for}\qquad \tau\in(t_{l-1},t_l], \quad l\in \mathbb J. 
\end{equation*}
A standard argument shows that $\hat{F}_{\Delta
  t}\to\|f(\cdot,\tau)\|_{L^\infty(\R^N)}$ in 
$L^1(0,T)$ as $\Delta t\to0^+$, and then the sequence $\{\hat{F}_{\Delta
  t}-\|f(\cdot,\tau)\|_{L^\infty(\R^N)}\}_{\Delta t\leq 1}$ is
equi-integrable by the Vitalli convergence theorem. Since
equi-integrability implies that
$$
\lim_{|t-s|\to0}\sup_{\Delta t\leq 1}\int_{[s,t]}\big|\hat{F}_{\Delta t}(\tau)-\|f(\cdot,\tau)\|_{L^\infty(\R^N)}\big|\dd \tau=0,
$$
the two claims in \eqref{otdt} readily follows.
\end{proof}

In view of equiboundedness and -continuity of $\{\Ut_h\}_{h>0}$, we can now use the Arzel\`a-Ascoli and Kolmogorov-Riesz type compactness results, see e.g. the Appendix of \cite{DrEy16},
to conclude the following result:

\begin{theorem}[Compactness]\label{compIntNumSch}
Assume \eqref{gas}, \eqref{u_0as}, $\Delta t=o_h(1)$,
\eqref{defNumSch} is a consistent scheme  satisfying \eqref{AsUnifLe} and  such that \eqref{asNumSch}
holds for every $h>0$, let $\{U_h^j\}_{h>0}$ be the solutions of \eqref{defNumSch}
and $\{\Ut_h\}_{h>0}$ their time interpolants defined in \eqref{xt-interp}.
Then there exists a subsequence $\{\Ut_{h_n}\}_{n\in\N}$ and a $u\in C([0,T];L_\textup{loc}^1(\R^N))$ such that
$$
\Ut_{h_n}\to u \qquad\text{in}\qquad
C([0,T];L_\textup{loc}^1(\R^N))\quad\text{and\quad a.e.}\qquad\text{as}\qquad n\to\infty.
$$
\end{theorem}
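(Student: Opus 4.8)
The plan is to verify the hypotheses of the generalized Arzel\`a-Ascoli theorem for maps into the (complete) metric space $L_\textup{loc}^1(\R^N)$, using the Kolmogorov-Riesz theorem to supply the pointwise-in-time spatial precompactness; both in the form collected in the Appendix of \cite{DrEy16}. All three ingredients needed have in fact already been established in the preceding lemmas, so the real work lies in assembling them correctly and in the final upgrade to a.e. convergence. Throughout I would use that the equiboundedness, the spatial equicontinuity of Lemma \ref{spaceRegNumSch}, and the temporal estimates transfer mutatis mutandis from $U_h^j$ to the interpolant $\Ut_h$.

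First I would fix $t\in[0,T]$ and show that $\{\Ut_h(\cdot,t)\}_{h>0}$ is relatively compact in $L_\textup{loc}^1(\R^N)$. On any compact $K\subset\R^N$, equiboundedness in $L^1(K)$ follows from the $L^1$-stability bound Theorem \ref{thm:propertiesscheme} \eqref{thm:propertiesscheme:intstable} (read for $\Ut_h$), while the uniform control of spatial $L^1$-translations is exactly Lemma \ref{spaceRegNumSch}, whose modulus $\lambda_{u_0,f}$ from \eqref{omegau0f} tends to zero as the shift shrinks, independently of $h$. These are precisely the hypotheses of Kolmogorov-Riesz on $L^1(K)$. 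Exhausting $\R^N$ by an increasing sequence of compacts and taking a diagonal subsequence then yields relative compactness in the Fr\'echet metric of $L_\textup{loc}^1(\R^N)$.

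Next I would supply the equicontinuity in time. Lemma \ref{lem:timereginterpolant} bounds $\|\Ut_h(\cdot,t)-\Ut_h(\cdot,s)\|_{L^1(K)}$ by $\Lambda_K(|t-s|)+|K|\big(\int_s^t\|f(\cdot,\tau)\|_{L^\infty(\R^N)}\dd\tau+\lambda(|t-s|,\Delta t)\big)$. Since $\Lambda_K$ from \eqref{timeRegMod} is a modulus of continuity, the $f$-integral is absolutely continuous in $(s,t)$ uniformly in $h$, and $\lambda$ satisfies \eqref{otdt}, the coupling $\Delta t=o_h(1)$ forces the whole right-hand side to tend to zero as $|t-s|\to0$ uniformly in $h$. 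Thus $\{\Ut_h\}_{h>0}$ is uniformly equicontinuous as a family of maps $[0,T]\to L_\textup{loc}^1(\R^N)$. Combining this with the pointwise precompactness from the previous step, Arzel\`a-Ascoli produces a subsequence $\{\Ut_{h_n}\}$ and a limit $u\in C([0,T];L_\textup{loc}^1(\R^N))$ with $\Ut_{h_n}\to u$ in $C([0,T];L_\textup{loc}^1(\R^N))$.

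Finally I would upgrade to a.e. convergence: uniform-in-time convergence in $L_\textup{loc}^1(\R^N)$ implies, after integrating in $t$ over $[0,T]$ and in $x$ over any ball, convergence in $L_\textup{loc}^1(Q_T)$, whence a further subsequence converges a.e. in $Q_T$, necessarily to the same $u$. I expect no deep obstacle at this stage, since the substantive estimates already reside in Lemmas \ref{spaceRegNumSch} and \ref{lem:timereginterpolant}; the one point demanding genuine care is checking that the temporal modulus is truly uniform in $h$ -- this is exactly where the hypothesis $\Delta t=o_h(1)$ and the structure of $\lambda$ in \eqref{otdt} are indispensable -- together with the routine diagonalization needed to pass from $L^1(K)$-compactness to $L_\textup{loc}^1(\R^N)$-compactness.
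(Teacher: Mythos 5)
Your proposal is correct and follows essentially the same route as the paper: the paper's proof consists precisely of invoking the Arzel\`a--Ascoli and Kolmogorov--Riesz type compactness results from the Appendix of \cite{DrEy16}, on the strength of the equiboundedness (Theorem \ref{thm:propertiesscheme}), the spatial equicontinuity (Lemma \ref{spaceRegNumSch}), and the temporal equicontinuity of the interpolants (Lemma \ref{lem:timereginterpolant}), exactly the three ingredients you assemble. Your explicit diagonalization over compacts and the final extraction of an a.e.\ convergent further subsequence are just the details the paper leaves to the cited reference.
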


\subsection{Convergence of the numerical scheme}
\label{sec:proofConvNumSch}

In this section we prove convergence of the scheme, Theorem
\ref{thm:conv}.
We start with a consequence of the consistency and stability of the
scheme and the stability of the equation. 
\begin{lemma}\label{lem:conv}
Under the assumptions of Theorem \ref{compIntNumSch}, any subsequence
of $\{\Ut_h\}_{h>0}$ that converges in
$C([0,T];L_\textup{loc}^1(\R^N))$, converges to a distributional
solution $u\in L^1(Q_T)\cap  L^\infty(Q_T)$ of \eqref{E}. 
\end{lemma}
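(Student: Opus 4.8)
The plan is to pass to the limit $h=h_n\to0^+$ in a distributional form of the scheme. Fix $\psi\in C_\textup{c}^\infty(\R^N\times[0,T))$. The interpolant $\Ut_h$ from \eqref{xt-interp} is continuous and piecewise linear in time, with $\Ut_h(\cdot,0)=u_0$ and $\partial_t\Ut_h=\tfrac{U_h^j-U_h^{j-1}}{\Delta t_j}$ on $(t_{j-1},t_j)$. Integrating $\int_0^T\int\partial_t\Ut_h\,\psi$ by parts in time (the boundary term at $t=T$ vanishes since $\psi$ is supported in $[0,T)$), inserting the scheme \eqref{defNumSch} for $\partial_t\Ut_h$, and moving each $\Levy_i^h$ onto $\psi$ by the symmetry of $\nu_i^h$ (the Fubini argument underlying Lemma \ref{Levynuwell-def}, valid since the $\varphi_i^h(U_h^{\cdot})$ are in $L^\infty$ and $\Levy_i^h[\psi],\psi\in L^1$), I obtain the discrete identity
\begin{equation*}
\begin{split}
&\int_0^T\!\!\int_{\R^N}\Ut_h\,\partial_t\psi\dd x\dd t+\int_{\R^N}u_0\,\psi(\cdot,0)\dd x\\
&\quad+\sum_{j=1}^J\int_{t_{j-1}}^{t_j}\!\!\int_{\R^N}\big(\varphi_1^h(U_h^j)\Levy_1^h[\psi]+\varphi_2^h(U_h^{j-1})\Levy_2^h[\psi]+F^j\psi\big)\dd x\dd t=0.
\end{split}
\end{equation*}
This is exactly \eqref{def_eq} but with $\Levy_i^h,\varphi_i^h$ in place of $\Operator,\varphi$, so the task reduces to passing to the limit term by term.

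The first two terms and the source term are routine. Since $\Ut_{h_n}\to u$ in $C([0,T];L^1_\textup{loc}(\R^N))$ and $\partial_t\psi$ is bounded with compact support, the first term tends to $\int_0^T\int u\,\partial_t\psi$; the $u_0$ term is independent of $h$. Writing the source term as $\int_0^T\int\overline{F}_h\,\psi$ with $\overline{F}_h(\cdot,t):=F^j$ on $(t_{j-1},t_j]$, the standard convergence of the time averages \eqref{sourceNumTime}, $\overline{F}_h\to f$ in $L^1(Q_T)$, together with $\psi$ bounded of compact support gives convergence to $\int_0^T\int f\psi$.

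The nonlocal terms are the heart of the argument. I would express them through the piecewise-constant-in-time interpolants $\overline{U}_h^{+}(\cdot,t):=U_h^{j}$ and $\overline{U}_h^{-}(\cdot,t):=U_h^{j-1}$ on $(t_{j-1},t_j]$; by the time equicontinuity of Theorem \ref{thm:timereg} and $\Delta t=o_h(1)$ these differ from $\Ut_h$ by $o(1)$ in $C([0,T];L^1_\textup{loc})$, whence $\overline{U}_h^{\pm}\to u$ in $L^1_\textup{loc}(Q_T)$. For each $i$ I split
\begin{equation*}
\begin{split}
&\varphi_i^h(\overline{U}_h^{\pm})\Levy_i^h[\psi]-\varphi_i(u)\Operator_i[\psi]\\
&\quad=\varphi_i^h(\overline{U}_h^{\pm})\big(\Levy_i^h[\psi]-\Operator_i[\psi]\big)+\big(\varphi_i^h(\overline{U}_h^{\pm})-\varphi_i(u)\big)\Operator_i[\psi].
\end{split}
\end{equation*}
The first piece is controlled by the uniform $L^\infty$-bound on $\varphi_i^h(\overline{U}_h^{\pm})$ (from $L^\infty$-stability, Theorem \ref{thm:propertiesscheme}\eqref{thm:propertiesscheme:bndstable}, and \eqref{condTimeRegVarphiBounded}) combined with the consistency estimate in Definition \ref{schCon}\eqref{schCon:Op}, integrated in time. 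For the second piece I pass to a further subsequence so that $\overline{U}_h^{\pm}\to u$ a.e. in $Q_T$ (harmless, since the identity being proved involves only the fixed limit $u$); then local uniform convergence $\varphi_i^h\to\varphi_i$ in Definition \ref{schCon}\eqref{schCon:Non} and continuity of $\varphi_i$ give $\varphi_i^h(\overline{U}_h^{\pm})\to\varphi_i(u)$ a.e. and uniformly bounded, and splitting $\R^N=B_R\cup B_R^c$ I use dominated convergence on $B_R$ and the fact that $\Operator_i[\psi]\in L^1(Q_T)$ (by \eqref{eq:WPL}) to control the tail as $R\to\infty$. Finally, the equation-consistency in Definition \ref{schCon}\eqref{schCon:Eq} combines $\varphi_1(u)\Operator_1[\psi]+\varphi_2(u)\Operator_2[\psi]=\varphi(u)\Operator[\psi]$ in $\mathcal D'(Q_T)$, producing precisely \eqref{def_eq}; the uniform $L^1$- and $L^\infty$-stability bounds pass to the limit by Fatou, giving $u\in L^1(Q_T)\cap L^\infty(Q_T)$.

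The step I expect to be hardest is this last nonlocal passage, where one must at once transfer the discrete operator onto the smooth test function, replace $\Levy_i^h$ by $\Operator_i$ using only $L^1$-consistency while the nonlinear factor stays merely bounded, and pass $\varphi_i^h(\overline{U}_h^{\pm})\to\varphi_i(u)$ for a merely continuous $\varphi$ against a kernel $\Operator_i[\psi]$ that is not compactly supported. Reconciling a.e.\ convergence under a non-Lipschitz nonlinearity with the non-compact spatial tail of $\Operator_i[\psi]$ is the delicate point; the remaining estimates are bookkeeping on top of the a priori bounds already established.
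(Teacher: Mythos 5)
Your proof is correct and follows essentially the same route as the paper's: pass to the limit in the self-adjoint weak formulation of the scheme, controlling the nonlocal terms by the uniform $L^\infty$-bounds, the $L^1$-consistency of $\Levy_i^h$ (Definition \ref{schCon}(ii)), the locally uniform convergence $\varphi_i^h\to\varphi_i$, and dominated convergence against $\Operator_i[\psi]\in L^1(Q_T)$ along an a.e.-convergent further subsequence, then concluding with the equation-consistency of Definition \ref{schCon}(i) and Fatou for $u\in L^1(Q_T)\cap L^\infty(Q_T)$. The only deviations are harmless streamlinings: testing with $\psi(x,t)$ at the running time (rather than $\psi(x,t_{j-1})$) removes the paper's error term $E_2$, integrating by parts on the interpolant $\Ut_h$ makes the time-derivative term converge directly from the $C([0,T];L_\textup{loc}^1(\R^N))$ convergence, and your explicit use of Theorem \ref{thm:timereg} to identify the limits of the piecewise-constant interpolants $\overline{U}_h^{\pm}$ makes rigorous a point the paper treats somewhat tersely in its term $E_4$.
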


An immediate corollary of this lemma, the compactness in Theorem
\ref{compIntNumSch}, and uniqueness in Theorem \ref{unique}, is then
the following result. 
\begin{corollary}\label{cor}
Under the assumptions of Theorem \ref{compIntNumSch}, any subsequence
of $\{\Ut_h\}_{h>0}$ has a further subsequence that converges in
$C([0,T];L_\textup{loc}^1(\R^N))$ to the unique distributional solution $u\in L^1(Q_T)\cap 
L^\infty(Q_T)$ of \eqref{E}. 
\end{corollary}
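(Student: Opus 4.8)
The plan is to deduce Corollary~\ref{cor} as a formal consequence of the three results cited in its statement, without any new estimates. The logical skeleton is a classical compactness-and-uniqueness argument, applied twice: once to extract a convergent sub-subsequence via compactness, and once more at the level of the whole sequence to identify the limit.

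First I would fix an arbitrary subsequence of $\{\Ut_h\}_{h>0}$, say $\{\Ut_{h_m}\}_{m\in\N}$. Since the hypotheses of Theorem~\ref{compIntNumSch} are in force (they are precisely ``the assumptions of Theorem~\ref{compIntNumSch}''), the compactness theorem applies to this subsequence: the relevant equiboundedness and equicontinuity estimates (Lemma~\ref{spaceRegNumSch}, Lemma~\ref{lem:timereginterpolant}, and the $L^\infty$-stability in Theorem~\ref{thm:propertiesscheme}\eqref{thm:propertiesscheme:bndstable}) hold uniformly in $h$, hence in particular along $\{h_m\}$. Therefore Arzel\`a--Ascoli together with Kolmogorov--Riesz yields a further subsequence $\{\Ut_{h_{m_n}}\}_{n\in\N}$ and a limit $u\in C([0,T];L^1_\textup{loc}(\R^N))$ with
\[
\Ut_{h_{m_n}}\to u\qquad\text{in}\qquad C([0,T];L^1_\textup{loc}(\R^N))\quad\text{and a.e.}
\]
as $n\to\infty$.

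Next I would invoke Lemma~\ref{lem:conv} to identify this limit. The lemma states that \emph{any} subsequence of $\{\Ut_h\}_{h>0}$ that converges in $C([0,T];L^1_\textup{loc}(\R^N))$ has as its limit a distributional solution $u\in L^1(Q_T)\cap L^\infty(Q_T)$ of~\eqref{E}; the uniform $L^1$- and $L^\infty$-stability (Theorem~\ref{thm:propertiesscheme}\eqref{thm:propertiesscheme:intstable}--\eqref{thm:propertiesscheme:bndstable}) passes to the limit to give the $L^1\cap L^\infty$ membership. Since $\{\Ut_{h_{m_n}}\}$ is exactly such a convergent subsequence, its limit $u$ is a distributional solution of~\eqref{E} lying in $L^1(Q_T)\cap L^\infty(Q_T)$. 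Finally, Theorem~\ref{unique} guarantees that there is at most one distributional solution in $L^1(Q_T)\cap L^\infty(Q_T)$, so $u$ is \emph{the} unique such solution and does not depend on the choices made along the way. Assembling the two steps: the arbitrary subsequence $\{\Ut_{h_m}\}$ admits the further subsequence $\{\Ut_{h_{m_n}}\}$ converging in $C([0,T];L^1_\textup{loc}(\R^N))$ to this unique $u$, which is precisely the assertion of the corollary.

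The argument is essentially bookkeeping, so there is no genuine obstacle left once Lemma~\ref{lem:conv}, Theorem~\ref{compIntNumSch}, and Theorem~\ref{unique} are granted; the only point requiring mild care is to phrase the extraction so that it applies to an \emph{arbitrary} subsequence (not merely to the full sequence), since this ``subsequence of a subsequence'' formulation is exactly what later lets one upgrade to full convergence of $\{\Ut_h\}_{h>0}$ in Theorem~\ref{thm:conv} via the standard fact that a sequence converges whenever every subsequence has a further subsequence converging to one fixed limit. I would therefore make sure the limit $u$ furnished by Theorem~\ref{unique} is manifestly independent of the subsequence before closing the proof.
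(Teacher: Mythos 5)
Your proposal is correct and follows exactly the paper's intended argument: the paper gives no separate proof of Corollary~\ref{cor}, stating only that it is an immediate consequence of Lemma~\ref{lem:conv}, the compactness in Theorem~\ref{compIntNumSch}, and the uniqueness in Theorem~\ref{unique}, which is precisely the extraction-plus-identification scheme you carry out. Your added care in noting that the compactness estimates are uniform in $h$ (so Theorem~\ref{compIntNumSch} applies along an arbitrary subsequence, not just the full family) is a correct and worthwhile clarification of what the paper leaves implicit.
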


We now prove convergence of the scheme, Theorem \ref{thm:conv}.

\begin{proof}[Proof of Theorem \ref{thm:conv}]
By compactness, Theorem \ref{compIntNumSch}, there is a subsequence of
$\{\Ut_h\}_{h>0}$ that converge to some function $u$ in
$C([0,T];L_\textup{loc}^1(\R^N))$. By Lemma \ref{lem:conv}, $u$ is a
distributional solution of \eqref{E} belonging to $L^1(Q_T)\cap
L^\infty(Q_T)$. Then the whole sequence converges since it is bounded
and other limit points are excluded by Corollary \ref{cor}. 
\end{proof}
It remains to prove Lemma \ref{lem:conv}.

\begin{proof}[Proof of Lemma \ref{lem:conv}]\
  \smallskip
\noindent Take any  $C([0,T];L_\textup{loc}^1(\R^N))$ converging subsequence of
$\{\Ut_{h}\}_{h>0}$ and let $u$ be its limit. For simplicity we 
 also denote the subsequence by $\{\Ut_{h}\}_{h>0}$. Remember that
$\Ut_{h}$ is the time interpolation of $U_h$
defined in \eqref{xt-interp}.

\medskip
\noindent \textbf{1)}  {\it The limit $u\in L^1(Q_T)\cap L^\infty(Q_T)$.}
There is a further subsequence converging to $u$ for all $t$
and a.e. $x$. Hence we find that the $L^\infty$ bound of Theorem
\ref{thm:propertiesscheme} \eqref{thm:propertiesscheme:bndstable} is
inherited by $u$. Similarly, by Fatou's lemma, also the $L^1$
bound of Theorem \ref{thm:propertiesscheme}
\eqref{thm:propertiesscheme:intstable} carries over to $u$.
Hence we can conclude that $u\in L^1(Q_T)\cap 
L^\infty(Q_T)$.
\smallskip

\noindent We proceed to prove that $u$ is a
distributional solution of \eqref{E}, see Definition
\ref{distsol}.

\medskip
\noindent \textbf{2)} {\it Weak formulation of the numerical scheme \eqref{defNumSch}.}
Let $\psi \in C_\textup{c}^\infty(\R^N\times[0,T))$.
We multiply the scheme \eqref{defNumSch} by $\psi(x,t_{j-1})\Delta
t_j$, integrate in space, sum in time, and use the self-adjointness of
$\Levy^h_1, \Levy^h_2$, to get
\begin{equation}\label{discretedistform}
\begin{split}
 \int_{\R^N}\sum_{j=1}^{J}&\frac{U_h^{j}-U_h^{j-1}}{\Delta t_j} \psi(x,t_{j-1})\Delta t_j\dd x  = \int_{\R^N}\sum_{j=1}^{J}\varphi_1^h(U_h^{j})\Levy^h_1[\psi(\cdot,t_{j-1})]\Delta t_j\dd x\\
 \quad &+\int_{\R^N} \sum_{j=1}^{J} \varphi_2^h(U_h^{j-1})\Levy^h_2[\psi(\cdot,t_{j-1})]\Delta t_j\dd x + \int_{\R^N}\sum_{j=1}^JF^j(x)\psi(x,t_{j-1})\Delta t_j\dd x.
 \end{split}
\end{equation}
In the rest of the proof we will show that the different terms in this
equation converge to the corresponding terms in \eqref{def_eq} and thereby
conclude the proof. 

\medskip
\noindent \textbf{3)} {\it Convergence to the time derivative.}
By summation by parts, $U_h^0=u_0$, and $\psi(x,t_{J-1})=0$ for
$\Delta t$ small enough since $\psi$ has compact support,
\begin{equation*}
\begin{split}
&\int_{\R^N}\sum_{j=1}^{J}\frac{U_h^{j}(x)-U_h^{j-1}(x)}{\Delta t_j}
  \psi(x,t_{j-1})\Delta t_j\dd x\\
&=-\int_{\R^N}\sum_{j=1}^{J-1}U_h^j(x)\frac{\psi(x,t_j)-\psi(x,t_{j-1})}{\Delta
    t_j}\Delta t_j\dd x\\
  &\quad+\int_{\R^N}U_h^{J}(x)\psi(x,t_{J-1})\dd
  x-\int_{\R^N}U_h^0(x)\psi(x,0)\dd x\\
  &= - I + 0-\int_{\R^N}u_0(x)\psi(x,0)\dd x.
\end{split}
\end{equation*}

To continue, we note that for any $r>0$,
$$\sum_{j=1}^{J-1}\frac{\psi(x,t_j)-\psi(x,t_{j-1})}{\Delta
  t_j}\indik_{[t_{j-1},t_j)}(t)\to \dell_t\psi(x,t)\quad \text{in}\quad L^\infty(\R^N\times[0,T-r))$$
 as $\Delta t\to 0^+$.
Then since $U_h$ is uniformly bounded and $\Ut_h$ converges to $u$ in
$C(0,T;L_\textup{loc}^1(\R^N))$, and $\psi$ has compact support,
  a standard argument shows that
  $$I=\int_{\R^N}\sum_{j=1}^{J-1}U_h^j(x)\frac{\psi(x,t_j)-\psi(x,t_{j-1})}{\Delta
    t_j}\Delta t_j\dd x\to \int_{\R^N}\int_0^Tu(x,t)\dell_t\psi(x,t)\dd t\dd
  x$$
  as $h\to 0^+$. Combining all estimates, we conclude that as $h\to 0^+$, 
\begin{align*}
&\int_{\R^N}\sum_{j=1}^{J}\frac{U_h^{j}-U_h^{j-1}}{\Delta t_j}
\psi(x,t_{j-1})\Delta t_j\dd
x\\
&\to-\int_{\R^N}\int_0^Tu\,\dell_t\psi\dd t\dd
x-\int_{\R^N}u_0(x)\psi(x,0)\dd x.
\end{align*}

\medskip
\noindent \textbf{4)} {\it Convergence of the nonlocal terms.} 
We start by the $\Levy^h_1$-term. By adding and subtracting terms we
find that
\begin{align*}
\int_{\R^N}\sum_{j=1}^J\varphi_1^h(U_h^j)\Levy^h_1[\psi(\cdot,t_{j-1})]\Delta
  t_j\dd x=&\int_0^T\int_{\R^N}\varphi_1(u)\Operator_1[\psi(\cdot,t)]\dd t\dd
  x\\ &+E_1+E_2+E_3+E_4,
\end{align*}
where
\begin{align*}
&|E_1|\leq \int_{\R^N}\sum_{j=1}^J\int_{t_{j-1}}^{t_j}|\varphi_1^h(U_h^j)|\big|\Levy^h_1[\psi(\cdot,t_{j-1})]-\Operator_1[\psi(\cdot,t_{j-1})]\big|\dd t\dd x\\
&|E_2| \leq\int_{\R^N}\sum_{j=1}^J\int_{t_{j-1}}^{t_j}|\varphi_1^h(U_h^j)|\big|\Operator_1[\psi(\cdot,t_{j-1})]-\Operator_1[\psi(\cdot,t)]\big|\dd t\dd x\\
&|E_3| \leq  \int_{\R^N}\sum_{j=1}^J\int_{t_{j-1}}^{t_j}\big|\varphi_1^h(U_h^j)-\varphi_1(U_h^j)\big||\Operator_1[\psi(\cdot,t)]|\dd t\dd x\\
&|E_4|
 \leq\int_{\R^N}\sum_{j=1}^J\int_{t_{j-1}}^{t_j}\big|\varphi_1(U_h^j(x))-\varphi_1(u(x,t))\big||\Operator_1[\psi(\cdot,t)]|\dd
 t\dd x.
\end{align*}

First note that by \eqref{muas} and Remark \ref{addingconstants} (b),
$$\sup_{t\in[0,T]}\|\Operator_1[\psi(\cdot,t)]\|_{L^1}\leq
C\sup_{t\in[0,T]}(\|D^2\psi(\cdot,t)\|_{L^1}+\|\psi(\cdot,t)\|_{L^1})=:K<\infty.$$
Then by consistency (Definition \ref{schCon} \eqref{schCon:Op}),
$$
\sup_{t\in[0,T]}\|(\Operator_1-\Levy^h_1)[\psi(\cdot,t)]\|_{L^1(\R^N)}\leq\sup_{t\in[0,T]}\|\psi(\cdot,t)\|_{W^{k_1,1}(\R^N)}o_h(1)\stackrel{h\to0^+}{\longrightarrow}0.
$$
 By the uniform boundedness of $U_h$ (Theorem \ref{thm:propertiesscheme}
\eqref{thm:propertiesscheme:bndstable}) continuity of $\varphi$
\eqref{phias}, it first follows that $\|\varphi_1(U_h^j)\|_{L^\infty(Q_T)}\leq C$,
and then by the uniform convergence of
$\varphi_1^h\to \varphi_1$ (Definition \ref{schCon} \eqref{schCon:Non}) and taking
$h$ small enough, 
$$\|\varphi_1^h(U_h^j)-\varphi_1(U_h^j)\|_{L^\infty(Q_T)}\stackrel{h\to0^+}{\longrightarrow}0
\quad\text{and}\quad \|\varphi_1^h(U_h^j)\|_{L^\infty(Q_T)}\leq 2C.$$
From these considerations we can immediately conclude that $E_1,
E_3\to 0$ as $h\to 0^+$.

To see that $E_2\to0$, we now only need to observe that by linearity
of $\Operator$ and a Taylor expansion,
$$
\|\Operator_1[\psi(\cdot,t_{j-1})]-\Operator_1[\psi(\cdot,t)]\|_{L^1}\leq\Delta t\sup_{s\in[0,T]}\|\Operator_1[\dell_t\psi(\cdot,s)]|_{L^1},
$$
and that $\|\Operator_1[\dell_t\psi(\cdot,s)]|_{L^1}\leq C\sup_{s\in[0,T]}\big(\|D^2\dell_t\psi(\cdot,s))\|_{L^1}+\|\dell_t\psi(\cdot,s)\|_{L^1}\big)<\infty$.
Finally, we see that $E_4\to0$ by the dominated convergence theorem
since $\varphi_1(\Ut_h)$ is uniformly bounded and we may assume (by
taking a further subsequence if necessary) $\Ut_h\to u$ a.e. and
hence $\varphi_1(\Ut_h)\to \varphi_1(u)$ a.e. in
$Q_T$ as $h\to0^+$ by \eqref{phias}.

A similar argument shows the convergence of the $\Levy^h_2$-term, and
we can therefore conclude that as $h\to0^+$,
\begin{equation*}
\begin{split}
&\int_{\R^N}\sum_{j=1}^{J}\varphi_1^h(U_h^{j})\Levy^h_1[\psi(\cdot,t_{j-1})]\Delta t_j\dd x+\int_{\R^N} \sum_{j=1}^{J} \varphi_2^h(U_h^{j-1})\Levy^h_2[\psi(\cdot,t_{j-1})]\Delta t_j\dd x\\
&\to \int_{\R^N}\int_0^T\varphi_1(u)\Operator_1[\psi(\cdot,t)]\dd t\dd x+ \int_{\R^N}\int_0^T\varphi_2(u)\Operator_2[\psi(\cdot,t)]\dd t\dd x.
\end{split}
\end{equation*}

\medskip
\noindent \textbf{5)} {\it Convergence to the right-hand side.}
By the definition of $F^j$,
\begin{align*}
&\bigg|\int_{\R^N}\bigg(\sum_{j=1}^JF^j(x)\psi(x,t_{j-1})\Delta t_j-\int_0^Tf(x,t)\psi(x,t)\dd t\bigg)\dd x\bigg|\\
&\leq\int_{\R^N}\sum_{j=1}^J\int_{t_{j-1}}^{t_{j}}|\psi(x,t)-\psi(x,t_{j-1})||f(x,t)|\dd t\dd x\\
&\leq\|\dell_t\psi\|_{L^\infty(Q_T)}\|f\|_{L^1(Q_T)}\Delta t\to0^+\qquad\text{as}\qquad h\to0^+.
\end{align*}

\medskip
\noindent \textbf{6)} {\it Conclusion.} In view of steps 3) -- 5) and Definition
\ref{schCon} \eqref{schCon:Eq}, if we pass to the limit  as $h\to0^+$ in
\eqref{discretedistform}, we find that $u$ satisfy \eqref{def_eq}. In
view of step 1), $u$ is then a distributional solution of \eqref{E}
according to Definition \ref{distsol}.

\end{proof}

\subsection{Numerical schemes on uniform spatial grids}
\label{proof:fullyDiscreteScheme}

 \begin{proof}[Proof of Proposition \ref{prop:equivalence}]

\begin{figure}[h!] 
\centering
\includegraphics[width=\textwidth]{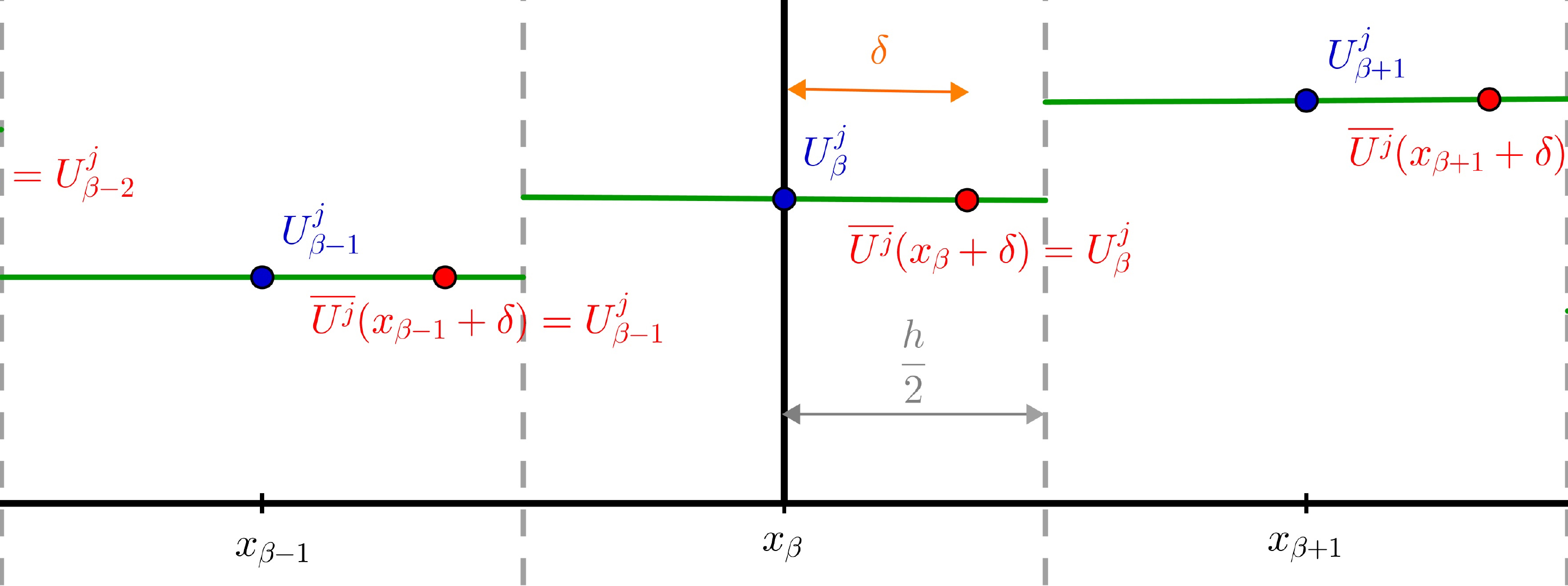}
\vspace{-0.6cm}
\caption{\footnotesize The relation between $U_\beta^j$ and
  $\overline{U^j}$ in Proposition \ref{prop:equivalence}.}
\label{fig:UnifGrid}
\end{figure}

See Figure \ref{fig:UnifGrid} for the relation between $U_\beta^j$ and $\overline{U^j}$. 

\medskip
\noindent(a) Let $x_\beta\in\Grid$.
Since the scheme \eqref{defNumSch} is translation invariant in $x$, it
follows that $U^{j}$ and $U^{j}(\cdot+y)$ are 
solutions of \eqref{defNumSch} with $\ol{U^0}, \ol{F^j}$ and
$\ol{U^0}(\cdot+y), \ol{F^j}(\cdot+y)$ as data respectively. By
uniqueness (Theorem \ref{thm:existuniqueNumSch}) and the
fact that $\ol{U^0}(x_\beta)=U_\beta^0=\ol{U^0}(x_\beta+y)$ and
$\ol{F^j}(x_\beta)=F_\beta^j=\ol{F^j}(x_\beta+y)$ for all $y\in R_h$
and $j>0$, we get that $U^j$ is constant on $x_\beta+R_h$ (a.e.) for all $j>0$.
Take a
piecewise constant   version of $U^{j}$ and let
$U_\beta^j:=h^{-N}\int_{x_\beta+R_h}U^{j}(x)\dd x=U^j(x)$ for all
$x\in x_\beta+R_h$. In particular, $U^{j}(x_\beta)=U_\beta^j$.
  
  Now, let $y\in
  x_\beta+R_h$ be such that the scheme \eqref{defNumSch} holds at
  $y$.
  Since the grid $\Grid$ is uniform and $\mathcal S_1,\mathcal
  S_2\subset \Grid$, $U^{j}(y+z_\beta)=U_{\beta+\gamma}^j$ for any
  $z_\gamma\in\Grid$ and any $j$, and then

 \begin{align*}
\Levy_i^h[\varphi_i^h(U^{j})](y)&=\sum_{\beta\not=0}
\big(\varphi_i^h(U^{j}(y+z_\beta))-\varphi_i^h(U^{j}(y))\big)
\omega_{\beta,h}\\
&=\sum_{\beta\not=0}
\big(\varphi_i^h(U_{\beta+\gamma}^{j})-\varphi_i^h(U_{\beta}^{j})\big)\omega_{\beta,h}=\Levy_i^h[\varphi_i^h(U^j_{\cdot})]_\beta
\end{align*}
where $\Levy_i^h$ on the left is understood as an operator on functions on $\Grid$.
Since $U^j$ satisfies \eqref{defNumSch} at $y$, we can
therefore conclude
that $U_{\beta}^{j}=U^j(y)$ satisfies \eqref{FullyDiscNumSch1} at $x_\beta$. 

\medskip
\noindent (b) Since $\ol{U^{j}}(y+z_\beta)=U_{\beta+\gamma}^j$ for any
  $z_\gamma\in\Grid$ and any $y\in x_\beta + R_h$ and the scheme
\eqref{FullyDiscNumSch1} holds at $x_\beta$, similar considerations as
in the proof of part (a) show that $\ol{U^{j}}$ satisfy the scheme
\eqref{defNumSch} at every point in $x_\beta+R_h$.
\end{proof}

\begin{proof}[Proof of Theorem \ref{thm:fullydisc}]  The equivalence
  given by Proposition \ref{prop:equivalence} ensures that parts
  \eqref{eu}--\eqref{thm:fullydisc:eqconttime}
  follow from the fact
  that $U_\beta^j$ (the solution of \eqref{FullyDiscNumSch1}) is the
  restriction to the grid $\Grid$ of $U_h^j$ (the solution of
  \eqref{defNumSch}). Integrals become sums because for functions
  $V$ on $\Grid$ with interpolants $\overline{V}$, 
\[
\int_{\R^N} \overline{V}(x) \dd x=h^N\sum_{\beta\neq0} V_\beta.
\]
\eqref{thm:fullydisc:conv} Let $U_h^j$ be the solution of
\eqref{defNumSch} for $u_0$ and $F^j$. Respectively let
$\overline{U^j}$ be the solution of \eqref{defNumSch} for
$\overline{U^0}$ and $\overline{F^j}(x)$. Then, for all $j\in\J$, by Theorem
\ref{thm:propertiesscheme:contractive} and continuity of $L^1$-translation,
\begin{equation*}
\begin{split}
\int_{\R^N} |\overline{U^j}(x)-U_h^j(x)|\dd x & \leq \int_{\R^N} |\overline{U^0}(x)-u_0(x)|\dd x+\sum_{l=1}^j \Delta t_l\int_{\R^N} |\overline{F^l}(x)-F^l(x)|\dd x\\
&\leq\lambda_{u_0,f}(h)\to 0 \qquad\textup{ as }\qquad h\to0^+.
\end{split}
\end{equation*}

Now for any compact $K\subset \R^N$,
\begin{equation*}
\begin{split}
  &\vertiii{U-u}_{K}=\max_{t_j\in\GridT}\|\overline{U^j}-u(\cdot,t_j)\|_{L^1(K)}\\
&\leq\max_{t_j\in\GridT}\|\overline{U^j}-U^j_h\|_{L^1(K)}+\max_{t_j\in\GridT}\|U^j_h-u(\cdot,t_j)\|_{L^1(K)}\\
&\leq \lambda_{u_0,f}(h)
  +\sup_{t\in[0,T]}\|\Ut_h(\cdot,t)-u(\cdot,t)\|_{L^1(K)}
\end{split}
\end{equation*}
which tends to zero as $h\to0^+$ by Theorem \ref{thm:conv}.
\end{proof}

\subsection{A priori estimates for distributional solutions}
\label{sec:proofAprioriDistSol}

\begin{proof}[Proof of Proposition \ref{propconstructeddistsol}]
We will prove the results by passing to the limit in the a
priori estimates for $\Ut_h,\Vt_h$ in Theorems
\ref{thm:propertiesscheme} and \ref{thm:propertiesscheme:contractive}.
To do that we note that by Theorem \ref{thm:conv}, $\Ut_h,\Vt_h\to u,v$ in
$C([0,T];L_\textup{loc}^1(\R^N))$ and a.e. (for a subsequence) as
$h\to0^+$. We also observe that for $X=L^1(\R^N)$, $X=L^\infty(\R^N)$, and
$t\in[0,T-\Delta t]$, 
\begin{equation*}
\begin{split}
I=\bigg|\int_{0}^{t+\Delta t}\|f(\cdot,\tau)\|_X\dd \tau-\int_0^t\|f(\cdot,\tau)\|_X\dd \tau\bigg|
=\int_{t}^{t+\Delta t}\|f(\cdot,\tau)\|_X\dd \tau.
\end{split}
\end{equation*}
Since $\mathbf{1}_{(t,t+\Delta t]}(\tau)\to0$ a.e. as
$\Delta t\to0^+$ and \eqref{gas} hold, $I\to 0$  as
$\Delta t\to0^+$ by the dominated convergence theorem. Similar
results hold for the other time integrals that appear on the right
hand-sides in Theorems \ref{thm:propertiesscheme} and
\ref{thm:propertiesscheme:contractive}.

\medskip
\noindent\eqref{propconstructeddistsol:intbound} and
\eqref{propconstructeddistsol:contraction} then follow from Theorems
\ref{thm:propertiesscheme} \eqref{thm:propertiesscheme:intstable} and
\ref{thm:propertiesscheme:contractive} and Fatou's lemma. 
\medskip

\noindent \eqref{propconstructeddistsol:comp} is an immediate
consequence of \eqref{propconstructeddistsol:contraction}.
\medskip

\noindent \eqref{propconstructeddistsol:bndbound} follows from the
$L^\infty$-bound Theorem \ref{thm:propertiesscheme}
\eqref{thm:propertiesscheme:bndstable}, the estimate
$|u|\leq|u-\Ut_h|+|\Ut_h|$, and the a.e. convergence of $\Ut_h$.

\medskip
\noindent\eqref{propconstructeddistsol:timereg} follows by the triangle
inequality, Theorem \ref{thm:timereg} (see also Lemma \ref{lem:timereginterpolant}), and passing to the limit:
\begin{equation*}
\begin{split}
\|u(\cdot,t)-u(\cdot,s)\|_{L^1(K)}&\leq2\|u-\Ut_h\|_{C([0,T];L^1(K))}+\Lambda_{K}(|t-s|)\\
&\quad+|K|\int_{s}^{t}\|f(\cdot,\tau)\|_{L^\infty(\R^N)}\dd \tau+|K|\,\lambda(|t-s|, \Delta t),
\end{split}
\end{equation*}
where $\|u-\Ut_h\|_{C([0,T];L^1(K))}$ and $\lambda(|t-s|, \Delta t)$ goes to zero when $h\to0^+$.
\end{proof}


\section{Auxiliary results}
\label{sec:auxresults}

\subsection{The operator $\Te$}
Theorem \ref{propOpT:proplimsolnofEllipP} with $T=\Te$
follows from the three results of this section. Note that we do not
need $\psi\in L^1$ in most of the results.  

\begin{lemma}\label{MonoT}
Assume \eqref{nuas}, \eqref{philipas}, $\Lvar
\nu(\R^N)\leq1$  and
$\psi, \hat{\psi} \in L^\infty(\R^N)$. 
If $\psi\leq \hat{\psi}$ a.e., then $\Te[\psi]\leq
\Te[\hat{\psi}]$ a.e. 
\end{lemma}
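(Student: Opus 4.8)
The plan is to write the difference $\Te[\hat\psi]-\Te[\psi]$ out explicitly and show it is nonnegative a.e.\ by splitting the nonlocal contribution into a nonnegative ``gain'' part (an integral over the stencil) and a ``loss'' part (a local term) that is absorbed by the positive local part $\hat\psi-\psi$ thanks to the CFL condition $\Lvar\,\nu(\R^N)\leq1$.

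First I would set $M:=\max\{\|\psi\|_{L^\infty},\|\hat\psi\|_{L^\infty}\}$, so that $\psi,\hat\psi$ take values in $[-M,M]$ a.e.\ and $\varphi$ is Lipschitz there with constant $\Lvar$. Writing $g:=\varphi(\psi)$ and $\hat g:=\varphi(\hat\psi)$, monotonicity of $\varphi$ together with $\psi\leq\hat\psi$ gives $0\leq \hat g-g\leq \Lvar(\hat\psi-\psi)$ a.e. Since $\Levynu$ is well-defined and linear on $L^\infty(\R^N)$ by Lemma \ref{Levynuwell-def} (no $L^1$ integrability is needed), I would expand
\begin{equation*}
\Te[\hat\psi](x)-\Te[\psi](x)=(\hat\psi-\psi)(x)+\int_{\R^N}(\hat g-g)(x+z)\dd\nu(z)-(\hat g-g)(x)\,\nu(\R^N).
\end{equation*}

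The key step is to estimate the three terms. The integral term is nonnegative: for a.e.\ $x$ one has $(\hat g-g)(x+z)\geq0$ for $\nu$-a.e.\ $z$, which I would justify by Tonelli's theorem, integrating $[(\hat g-g)(x+z)]^-$ in $(x,z)$ and using translation invariance of Lebesgue measure with $\hat g\geq g$ a.e. For the two local terms, the Lipschitz bound yields $(\hat g-g)(x)\,\nu(\R^N)\leq \Lvar\,\nu(\R^N)\,(\hat\psi-\psi)(x)$, so that
\begin{equation*}
(\hat\psi-\psi)(x)-(\hat g-g)(x)\,\nu(\R^N)\geq\big(1-\Lvar\,\nu(\R^N)\big)(\hat\psi-\psi)(x)\geq0\quad\text{a.e.},
\end{equation*}
where the final inequality uses $\Lvar\,\nu(\R^N)\leq1$ and $\psi\leq\hat\psi$. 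Adding the nonnegative integral term gives $\Te[\hat\psi]\geq\Te[\psi]$ a.e., as claimed.

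The computation itself is short; the only point requiring genuine care is the measure-theoretic bookkeeping, since $\nu$ may be singular (for instance purely atomic in the discrete case of Definition \ref{LevyInTheClass}). Thus the pointwise-a.e.\ nonnegativity of the gain term cannot be read off directly from $\hat g-g\geq0$ a.e.\ but must be obtained via the Fubini/Tonelli argument above. This is the main (and still routine) obstacle.
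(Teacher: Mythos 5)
Your proof is correct and follows essentially the same route as the paper's: expand $\Te[\hat\psi]-\Te[\psi]$, use monotonicity of $\varphi$ to discard the nonnegative nonlocal ``gain'' term, and absorb the local term via the Lipschitz bound together with the condition $\Lvar\,\nu(\R^N)\leq1$. The only differences are cosmetic --- the paper phrases the Lipschitz step through the mean value theorem where you use the bound $0\leq\varphi(\hat\psi)-\varphi(\psi)\leq\Lvar(\hat\psi-\psi)$ directly, and your Tonelli argument for the $\nu$-a.e.\ sign of the integrand makes explicit a measure-theoretic point that the paper passes over silently.
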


\begin{proof}
By definition
\begin{equation*}
\begin{split}
&\Te[\psi](x)-\Te[\hat{\psi}](x)\\
&=\psi(x)-\hat{\psi}(x) + \int_{\R^N}\Big(\big(\varphi\left(\psi\right)-\varphi(\hat{\psi})\big)(x+z)-\big(\varphi\left(\psi\right)-\varphi(\hat{\psi})\big)(x)\Big)\dd \nu(z).\\
\end{split}
\end{equation*}
Since $\varphi$ is nondecreasing and $\psi\leq \hat{\psi} $,
$\varphi(\psi)-\varphi(\hat{\psi})\leq0$ and 
\begin{equation*}
\begin{split}
\Te[\psi](x)-\Te[\hat{\psi}](x)&\leq \psi(x)-\hat{\psi}(x) +0-\big(\varphi\left(\psi\right)-\varphi(\hat{\psi})\big)(x)\int_{\R^N}\dd \nu(z).
\end{split}
\end{equation*}
By \eqref{philipas} and the mean value theorem there exists $\xi \in [0, \Lvar]$ such that $ \varphi\left(\psi(x)\right)-\varphi(\hat{\psi}(x))=\xi \big( \psi(x)-\hat{\psi}(x)\big)$. Hence, 
\begin{equation*}
\Te[\psi](x)-\Te[\hat{\psi}](x)\leq \big(\psi(x)-\hat{\psi}(x)\big)\left[1-\xi   \nu(\R^N)\right].
\end{equation*}
Hence $\Te[\psi]-\Te[\hat\psi]\leq 0$ since $\psi-\hat\psi\leq0$ and $\xi
\nu(\R^N)\leq \Lvar \nu(\R^N)\leq1$.
\end{proof}

Now we deduce an $L^1$-contraction result for $\Te$.

\begin{lemma}\label{TContT}
Assume \eqref{nuas}, \eqref{philipas}, $\Lvar \nu(\R^N)\leq1$, and
$\psi, \hat{\psi} \in L^\infty(\R^N)$, and $(\psi-\hat{\psi})^+ \in
L^1(\R^N)$.  Then 
\[
\int_{\R^N}(\Te[\psi](x)-\Te[\hat{\psi}](x))^+\dd x\leq\int_{\R^N}(\psi(x)-\hat{\psi}(x))^+\dd x.
\]
\end{lemma}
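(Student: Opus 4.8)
The plan is to combine the monotonicity of $\Te$ (Lemma \ref{MonoT}) with the conservativity $\int_{\R^N}\Levynu[\,\cdot\,]\dd x=0$ (Lemma \ref{Levynuwell-def}(b)), in the spirit of the classical Crandall--Tartar argument that a monotone conservative map is an $L^1$-contraction. First I would introduce the pointwise maximum $w:=\psi\vee\hat{\psi}=\max(\psi,\hat{\psi})$, which lies in $L^\infty(\R^N)$ since both $\psi$ and $\hat{\psi}$ do, and which satisfies $\|w\|_{L^\infty}\leq\max\{\|\psi\|_{L^\infty},\|\hat{\psi}\|_{L^\infty}\}$.

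Since $\psi\leq w$ and $\hat{\psi}\leq w$ a.e., Lemma \ref{MonoT} (applicable with the same constant $\Lvar$, because the ranges of $w$ and $\hat{\psi}$ are contained in those of $\psi$ and $\hat{\psi}$, so the CFL bound $\Lvar\nu(\R^N)\leq1$ still holds) gives $\Te[\psi]\leq\Te[w]$ and $\Te[\hat{\psi}]\leq\Te[w]$ a.e. In particular $\Te[w]-\Te[\hat{\psi}]\geq0$, so pointwise
$$(\Te[\psi]-\Te[\hat{\psi}])^+\leq(\Te[w]-\Te[\hat{\psi}])^+=\Te[w]-\Te[\hat{\psi}],$$
where the inequality uses $\Te[\psi]\leq\Te[w]$ together with the monotonicity of $t\mapsto t^+$, and the equality uses nonnegativity of $\Te[w]-\Te[\hat{\psi}]$.

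It then remains to integrate and to show $\int_{\R^N}(\Te[w]-\Te[\hat{\psi}])\dd x=\int_{\R^N}(\psi-\hat{\psi})^+\dd x$. By the definition of $\Te$ and the linearity of $\Levynu$,
$$\Te[w]-\Te[\hat{\psi}]=(w-\hat{\psi})+\Levynu[\varphi(w)-\varphi(\hat{\psi})].$$
The decisive point is that $w-\hat{\psi}=(\psi-\hat{\psi})^+\in L^1(\R^N)$ by hypothesis, and that by \eqref{philipas} one has $|\varphi(w)-\varphi(\hat{\psi})|\leq\Lvar|w-\hat{\psi}|=\Lvar(\psi-\hat{\psi})^+$, so that $\varphi(w)-\varphi(\hat{\psi})\in L^1(\R^N)$ as well. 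Lemma \ref{Levynuwell-def}(b) therefore applies to $\varphi(w)-\varphi(\hat{\psi})$ and the nonlocal term integrates to zero. Integrating the displayed identity gives $\int_{\R^N}(\Te[w]-\Te[\hat{\psi}])\dd x=\int_{\R^N}(w-\hat{\psi})\dd x=\int_{\R^N}(\psi-\hat{\psi})^+\dd x$, which combined with the pointwise bound yields the claim.

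The main subtlety (rather than a genuine obstacle) is integrability: since $\psi$ and $\hat{\psi}$ are only assumed bounded and not integrable, the individual quantities $\int_{\R^N}w\dd x$, $\int_{\R^N}\hat{\psi}\dd x$, and $\int_{\R^N}\Levynu[\varphi(w)]\dd x$ need not be finite, so one cannot split the integral termwise. The argument must be organized throughout in terms of the \emph{differences} $w-\hat{\psi}$ and $\varphi(w)-\varphi(\hat{\psi})$, both of which lie in $L^1(\R^N)$; this is precisely what makes Lemma \ref{Levynuwell-def}(b) applicable and causes the nonlocal contribution to cancel.
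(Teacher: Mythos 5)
Your proof is correct and follows essentially the same route as the paper's: the Crandall--Tartar argument via the maximum $\psi\vee\hat{\psi}$, monotonicity of $\Te$ from Lemma \ref{MonoT}, the pointwise decomposition of $\Te[\psi\vee\hat{\psi}]-\Te[\hat{\psi}]$, the Lipschitz bound placing $\varphi(\psi\vee\hat{\psi})-\varphi(\hat{\psi})$ in $L^1(\R^N)$, and conservativity from Lemma \ref{Levynuwell-def}(b). Your explicit remarks that the same constant $\Lvar$ serves for the pair $(\psi\vee\hat{\psi},\hat{\psi})$ and that one must work with differences rather than splitting the integral termwise are points the paper leaves implicit, and they are handled correctly.
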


\begin{proof}
This result follows as in the so-called Crandall-Tartar lemma, see e.g.
Lemma 2.12 in \cite{HoRi02}. We include the argument for
completeness. Since $\psi\vee
  \hat{\psi}\in L^\infty(\R^N)$ and $\psi\leq \psi\vee
  \hat{\psi}$, 
  we have by Lemma \ref{MonoT} that $\Te[\psi]\leq
  \Te[\psi\vee \hat{\psi}]$ and
  $\Te[\psi]-\Te[\hat{\psi}]\leq \Te[\psi\vee
  \hat{\psi}]-\Te[\hat{\psi}]$. Moreover, since $\hat{\psi}\leq \psi\vee
  \hat{\psi}$, we have by Lemma \ref{MonoT} again that
   $0=\Te[\hat{\psi}]-\Te[\hat{\psi}]\leq \Te[\psi\vee
  \hat{\psi}]-\Te[\hat{\psi}]$. Hence,
  $$(\Te[\psi]-\Te[\hat{\psi}])^+\leq \Te[\psi\vee
  \hat{\psi}]-\Te[\hat{\psi}],$$
and
\begin{equation*}
\begin{split}
\Te[\psi\vee\hat{\psi}](x)-\Te[\hat{\psi}](x) &= (\psi\vee \hat{\psi}-\hat{\psi}) 
+ \Levynu[\varphi(\psi\vee
 \hat{\psi})-\varphi(\hat{\psi})].
\end{split}
\end{equation*}
Next note that by \eqref{philipas}, 
$$0\leq \varphi(\psi\vee
 \hat{\psi})-\varphi(\hat{\psi})\leq \Lvar( \psi\vee
 \hat{\psi}-\hat{\psi})=\Lvar(\psi-\hat{\psi})^+ \in L^1(\R^N),$$
and hence, since $\Te$ is conservative by Lemma \ref{Levynuwell-def} (b),
\begin{equation*}
\begin{split}
&\int_{\R^N}(\Te[\psi]-\Te[\hat{\psi}])^+\dd x\\
&\leq\int_{\R^N} \big(\Te[\psi\vee\hat{\psi}]-\Te[\hat{\psi}]\big)\dd x=\int_{\R^N} \big( (\psi\vee \hat{\psi})-\hat{\psi} \big)\dd x=\int_{\R^N} (\psi-\hat{\psi})^+ \dd x,
\end{split}
\end{equation*}
which completes the proof.
\end{proof}

\begin{corollary}
Assume \eqref{nuas}, \eqref{philipas}, $\Lvar \nu(\R^N)\leq1$, and $\psi\in L^\infty(\R^N)$. Then
$$
\|\Te[\psi]\|_{L^\infty(\R^N)}\leq \|\psi\|_{L^\infty(\R^N)}.
$$
If also $\psi\in L^1(\R^N)$, then 
$$
\|\Te[\psi]\|_{L^1(\R^N)}\leq \|\psi\|_{L^1(\R^N)}.
$$
\end{corollary}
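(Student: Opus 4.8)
The plan is to obtain both estimates directly from the two preceding lemmas, the monotonicity Lemma \ref{MonoT} and the $L^1$-contraction Lemma \ref{TContT}, together with one elementary structural fact: the operator $\Levynu$ annihilates constants. Indeed, for any constant $c\in\R$, viewed as a constant function, one has $\Levynu[c](x)=\int_{\RN}(c-c)\dd\nu(z)=0$, and hence $\Te[c]=c+\Levynu[\varphi(c)]=c$, since $\varphi(c)$ is again a constant. This identity is what will let the comparison arguments close.

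For the $L^\infty$-bound, I would set $M:=\|\psi\|_{L^\infty(\RN)}$ and observe that the constant functions $\pm M$ belong to $L^\infty(\RN)$ and satisfy $-M\leq\psi\leq M$ a.e. The hypotheses of Lemma \ref{MonoT}, namely \eqref{nuas}, \eqref{philipas}, and $\Lvar\nu(\RN)\leq1$, are precisely those assumed here, so applying it twice yields $\Te[-M]\leq\Te[\psi]\leq\Te[M]$ a.e. By the constant identity above, $\Te[\pm M]=\pm M$, so this reads $-M\leq\Te[\psi]\leq M$ a.e., giving $\|\Te[\psi]\|_{L^\infty(\RN)}\leq M$. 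Note that this part uses only $\psi\in L^\infty(\RN)$.

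For the $L^1$-bound, I would invoke the $L^1$-contraction Lemma \ref{TContT} with the comparison function taken to be $0$, using again that $\Te[0]=0$. Taking the pair $(\psi,0)$ gives $\int_{\RN}(\Te[\psi])^+\dd x\leq\int_{\RN}\psi^+\dd x$, while taking the pair $(0,\psi)$ gives $\int_{\RN}(\Te[\psi])^-\dd x\leq\int_{\RN}\psi^-\dd x$; adding the two and using $|\cdot|=(\cdot)^++(\cdot)^-$ yields $\|\Te[\psi]\|_{L^1(\RN)}\leq\|\psi\|_{L^1(\RN)}$. Here one must check the integrability hypothesis of Lemma \ref{TContT}, i.e. $(\psi-0)^+=\psi^+\in L^1(\RN)$ and $(0-\psi)^+=\psi^-\in L^1(\RN)$, both of which are immediate from the additional assumption $\psi\in L^1(\RN)$.

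I do not expect a genuine obstacle: once the monotonicity and contraction lemmas are available, both inequalities are essentially a formality. The only points requiring a moment of care are the identity $\Te[c]=c$ for constants, which rests on the fact that $\nu$ is a (symmetric) L\'evy/integral measure so that differences of constants integrate to zero, and the verification that the $L^1$-membership hypothesis of Lemma \ref{TContT} is satisfied when comparing against $0$.
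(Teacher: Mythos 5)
Your proof is correct and is essentially the paper's own argument: the paper likewise proves the $L^\infty$-bound by noting $\Te[\pm\|\psi\|_{L^\infty(\R^N)}]=\pm\|\psi\|_{L^\infty(\R^N)}$ and invoking Lemma \ref{MonoT}, and obtains the $L^1$-bound as a direct consequence of Lemma \ref{TContT} (comparing against $0$, exactly as you spell out). Your write-up merely makes explicit the details the paper leaves implicit, namely that $\Levynu$ annihilates constants and that the integrability hypothesis of Lemma \ref{TContT} holds for the pair $(\psi,0)$.
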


\begin{proof} 
The case $p=1$ is just a direct consequence of Lemma \ref{TContT}. For $p=\infty$, note that 
$\Te[\|\psi\|_{L^\infty(\R^N)}]=\|\psi\|_{L^\infty(\R^N)}
$
and
$
\Te[-\|\psi\|_{L^\infty(\R^N)}]=-\|\psi\|_{L^\infty(\R^N)}.
$
Since 
$$
-\|\psi\|_{L^\infty(\R^N)}\leq \psi\leq \|\psi\|_{L^\infty(\R^N)},
$$ 
we conclude by Lemma \ref{MonoT} that $-\big(\|\psi\|_{L^\infty(\R^N)}\big)\leq \Te[\psi]\leq \|\psi\|_{L^\infty(\R^N)}$.
\end{proof}

\subsection{The operator $\Ti$}

Now we prove Theorem \ref{maincor} and Theorem
  \ref{propOpT:proplimsolnofEllipP} with $T=\Ti$. We start by a
  uniqueness result for bounded distributional solutions of
\begin{equation}\label{NAgenEllipP}\tag{Gen-EP}
w(x)-\Operator[\varphi(w)](x)=\rho(x)\qquad x\in \R^N.
\end{equation}

\begin{theorem}[Uniqueness, Theorem 3.1 in \cite{DTEnJa17b}]\label{NAuniquemuEP}
Assume \eqref{phias}, \eqref{muas} and $\rho\in L^\infty(\R^N)$. Then there is at most one distributional solution $w$ of \eqref{NAgenEllipP} such that $w \in L^\infty(\R^N)$ and $w-\rho\in L^1(\R^N)$.
\end{theorem}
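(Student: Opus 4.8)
Suppose $w_1,w_2$ are two distributional solutions of \eqref{NAgenEllipP} with $w_i\in L^\infty(\R^N)$ and $w_i-\rho\in L^1(\R^N)$, and set $W:=w_1-w_2$ and $\Phi:=\varphi(w_1)-\varphi(w_2)$. Then $W=(w_1-\rho)-(w_2-\rho)\in L^1(\R^N)\cap L^\infty(\R^N)$ and $\Phi\in L^\infty(\R^N)$, and since $\varphi$ is nondecreasing we have the pointwise sign relation $\Phi\,W\geq0$ with $\{\Phi>0\}\subseteq\{W>0\}$ and $\{\Phi<0\}\subseteq\{W<0\}$. Subtracting the distributional formulations of the two equations (in the sense analogous to Definition \ref{distsol}) and moving $\Operator$ onto the test function by self-adjointness (which holds because $\mu$ and $\sigma\sigma^T$ are symmetric), I would first record that
\begin{equation*}
\int_{\R^N}W\,\psi\dd x=\int_{\R^N}\Phi\,\Operator[\psi]\dd x\qquad\text{for all }\psi\in C_\textup{c}^\infty(\R^N),
\end{equation*}
i.e. $W=\Operator[\Phi]$ in $\mathcal D'(\R^N)$.

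Next I would regularize. Convolving with a standard mollifier $\omega_\delta$ and using that $\Operator$ commutes with convolutions (translation invariance, as exploited in the proof of Theorem \ref{thm:timereg}), the functions $W_\delta:=W*\omega_\delta\in L^1(\R^N)$ and $\Phi_\delta:=\Phi*\omega_\delta\in C^\infty(\R^N)\cap L^\infty(\R^N)$ satisfy the pointwise identity $W_\delta=\Operator[\Phi_\delta]$. I would then test this identity against $S(\Phi_\delta)$, where $S$ is any bounded, nondecreasing, Lipschitz function with $S(0)=0$, and use a nonlinear integration-by-parts (Stroock--Varopoulos / Dirichlet-form) inequality: the local part contributes $\int_{\R^N}S'(\Phi_\delta)\sum_i(\sigma_i\cdot D\Phi_\delta)^2\dd x\geq0$, and the nonlocal part contributes $\tfrac12\iint(\Phi_\delta(x)-\Phi_\delta(x+z))\big(S(\Phi_\delta(x))-S(\Phi_\delta(x+z))\big)\dd\mu(z)\dd x\geq0$, both nonnegative because $S$ is nondecreasing. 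This should yield $\int_{\R^N}W_\delta\,S(\Phi_\delta)\dd x\leq0$.

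The main obstacle is that $\Phi_\delta$ and $S(\Phi_\delta)$ need not decay at infinity, so the integration by parts is only formal and must be justified by truncation. I would handle this with the pointwise convexity (Kato) inequality $S(\Phi_\delta)\,\Operator[\Phi_\delta]\leq\Operator[\eta(\Phi_\delta)]$, where $\eta\geq0$ is the convex primitive of $S$ with $\eta(0)=0$; integrating against a cutoff $\phi_R\nearrow1$ gives $\int W_\delta\,S(\Phi_\delta)\,\phi_R\dd x\leq\int\eta(\Phi_\delta)\,\Operator[\phi_R]\dd x$. Since $W_\delta\in L^1(\R^N)$, the left-hand side converges to $\int W_\delta S(\Phi_\delta)\dd x$ as $R\to\infty$, while the right-hand side is controlled by splitting into the interior region where $\phi_R$ is locally constant (there $\Operator[\phi_R]\leq0$, so the contribution is $\leq0$ because $\eta\geq0$) and a transition annulus, whose contribution is estimated using the boundedness of $\eta(\Phi_\delta)$ and the Lévy bound \eqref{muas}. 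This tail control is the technical heart of the argument.

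Finally I would pass to the limits. Letting $\delta\to0^+$ (so $W_\delta\to W$ in $L^1(\R^N)$ and, along a subsequence, $\Phi_\delta\to\Phi$ a.e.) and using dominated convergence gives $\int_{\R^N}W\,S(\Phi)\dd x\leq0$ for every such $S$. Choosing $S\nearrow\mathbf 1_{(0,\infty)}$ yields $\int_{\{\Phi>0\}}W\dd x\leq0$; but $W>0$ on $\{\Phi>0\}$, so $|\{\Phi>0\}|=0$, and the symmetric choice (or interchanging $w_1,w_2$) gives $|\{\Phi<0\}|=0$. Hence $\Phi=0$ a.e., and substituting this into the distributional identity gives $\int_{\R^N}W\psi\dd x=\int_{\R^N}\Phi\,\Operator[\psi]\dd x=0$ for all $\psi\in C_\textup{c}^\infty(\R^N)$, so $W=0$ a.e., i.e. $w_1=w_2$.
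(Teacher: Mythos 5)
You should first note what the ``paper's own proof'' is here: there is none. Theorem \ref{NAuniquemuEP} is imported verbatim from \cite{DTEnJa17b} (Theorem 3.1 there), exactly like its parabolic counterpart Theorem \ref{unique}; it is an external input to this paper, not something proved in it. So your argument has to stand entirely on its own. Its first half does: subtracting the weak formulations gives $W=\Operator[\Phi]$ in $\mathcal{D}'(\R^N)$; mollification gives the pointwise identity $W_\delta=\Operator[\Phi_\delta]$ with $\Phi_\delta\in C^\infty\cap L^\infty$; the convexity (Kato) inequality $S(\Phi_\delta)\,\Operator[\Phi_\delta]\leq\Operator[\eta(\Phi_\delta)]$ and self-adjointness against $\phi_R\in C_\textup{c}^\infty(\R^N)$ legitimately yield $\int W_\delta S(\Phi_\delta)\phi_R\dd x\leq\int\eta(\Phi_\delta)\,\Operator[\phi_R]\dd x$. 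Likewise, the endgame (letting $S\nearrow\mathbf{1}_{(0,\infty)}$, using $\{\Phi>0\}\subseteq\{W>0\}$, then $W=\Operator[\Phi]=0$) would be fine \emph{if} the key inequality $\int W S(\Phi)\dd x\leq0$ were available.

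The genuine gap is the step you yourself call ``the technical heart'': the claim that $\limsup_{R\to\infty}\int\eta(\Phi_\delta)\,\Operator[\phi_R]\dd x\leq0$. On $\{\phi_R=1\}$ indeed $\Operator[\phi_R]\leq0$, but the complement is not a harmless transition annulus: it is all of $\{\phi_R<1\}$, on which $\Operator[\phi_R]$ changes sign (for any nontrivial nonlocal part, $\Operator[\phi_R]\geq0$ outside $\supp\phi_R$), and on which its $L^1$-mass \emph{grows} with $R$. Concretely, with $\phi_R=\phi(\cdot/R)$: $\|L^\sigma[\phi_R]\|_{L^1}\sim R^{N-2}$; the $|z|\leq1$ part of $\Levymu[\phi_R]$ also contributes $O(R^{N-2})$; and the $|z|>1$ part contributes up to $CR^{N-1}\int_{|z|>1}\min(|z|,R)\dd\mu(z)$, which under \eqref{muas} alone can be of order $R^N$ (only $\mu(\{|z|>1\})<\infty$ is assumed, not a first moment). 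Since $\int_{\R^N}\Operator[\phi_R]\dd x=0$, roughly half of this growing mass sits in the \emph{positive} part of $\Operator[\phi_R]$, located outside $B_R$. Against it you can only put $\|\eta(\Phi_\delta)\|_{L^\infty}$: the hypotheses give $W=w_1-w_2\in L^1$, but $\Phi=\varphi(w_1)-\varphi(w_2)$ inherits no integrability or decay when $\varphi$ is merely continuous (a modulus bound $|\Phi|\leq\omega(|W|)$ puts $\Phi$ in no $L^p$, $p<\infty$). So your proposed bound diverges (already for $N\geq3$ in every case, and for all $N$ for general $\mu$) instead of vanishing, and subtracting constants via $\int\Operator[\phi_R]\dd x=0$ does not help because $\eta(\Phi_\delta)$ has no limit at infinity. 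This is not a patchable technicality: the intermediate inequality $\int W S(\Phi)\dd x\leq 0$ for a merely bounded, non-decaying $\Phi$ is a nonlinear Liouville-type statement that is essentially equivalent to the theorem itself, and the impossibility of reaching it with compactly supported cutoffs is precisely why the proof in \cite{DTEnJa17b}, resting on the machinery of \cite{DTEnJa17a}, is a substantially longer argument built on specially constructed, integrable test functions adapted to $\Operator$ rather than on the truncation sketched here.
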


From now on we restrict ourselves to \eqref{EllipP}
  which is a special case of \eqref{NAgenEllipP}.
By approximation, stability, and compactness results, we will prove that constructed solutions of \eqref{EllipP} indeed satisfy Theorem \ref{NAuniquemuEP}, and hence, we obtain existence and a priori results.
Let us start by a contraction principle for globally
  Lipschitz $\varphi$'s, a more general result will
be given later.

\begin{lemma}\label{TContEllip}
Assume \eqref{nuas}, $\varphi:\R\to\R$ is nondecreasing and globally Lipschitz, and $(w-\hat{w})^+,
(\rho-\hat{\rho})^+\in L^1(\R^N)$. If $w, \hat{w}$
are respective a.e. sub- and supersolutions of \eqref{EllipP} with right-hand sides $\rho, \hat{\rho}$, then 
\[
\int_{\R^N}(w(x)-\hat{w}(x))^+\dd x\leq\int_{\R^N}(\rho(x)-\hat{\rho}(x))^+\dd x.
\]
\end{lemma}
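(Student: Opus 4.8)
The plan is to reduce everything to a Kato-type inequality for the finite-measure operator $\Levynu$, combined with the fact that $\Levynu$ integrates to zero on $L^1$-functions (Lemma \ref{Levynuwell-def}(b)). Write $m := w - \hat{w}$ and $\Phi := \varphi(w) - \varphi(\hat{w})$. First I would normalize $\varphi(0)=0$ as in Remark \ref{addingconstants}(a); since $w,\hat{w}\in L^\infty(\R^N)$ (implicit in being sub-/supersolutions, so that $\Levynu[\varphi(w)]$ and $\Levynu[\varphi(\hat w)]$ are pointwise well-defined via \eqref{nuas}), the global Lipschitz bound gives $0\le \Phi^+ \le \Lvar\, m^+ \in L^1(\R^N)$. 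Hence $\Phi^+\in L^1(\R^N)$, the operator $\Levynu[\Phi^+]$ is well-defined, and $\int_{\R^N}\Levynu[\Phi^+]\dd x = 0$ by Lemma \ref{Levynuwell-def}(b). Subtracting the supersolution inequality for $\hat{w}$ from the subsolution inequality for $w$ and using linearity of $\Levynu$, I obtain the a.e.\ inequality $m - \Levynu[\Phi] \le \rho - \hat{\rho}$.

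Next I would multiply this inequality by the nonnegative bounded function $\sgn^+(m):=\indik_{\{m>0\}}$, which preserves the a.e.\ inequality and yields $m^+ - \sgn^+(m)\Levynu[\Phi] \le (\rho-\hat{\rho})^+$ a.e. The heart of the matter is the pointwise Kato inequality $\sgn^+(m(x))\,\Levynu[\Phi](x) \le \Levynu[\Phi^+](x)$. Since $\varphi$ is nondecreasing, $m(x)$ and $\Phi(x)$ always share the same sign, so $\Phi^+(x)=0$ wherever $m(x)\le 0$, and $\Phi(x)=\Phi^+(x)$ wherever $m(x)>0$. On $\{m(x)>0\}$ one has $\Phi(x)=\Phi^+(x)$ and $\Phi(x+z)\le \Phi^+(x+z)$ for every $z$, so integrating $\Phi(x+z)-\Phi^+(x) \le \Phi^+(x+z)-\Phi^+(x)$ against $\dd\nu(z)$ gives exactly the claimed bound; on $\{m(x)\le0\}$ the left-hand side vanishes while $\Levynu[\Phi^+](x)=\int_{\R^N} \Phi^+(x+z)\dd\nu(z)\ge 0$ because $\Phi^+(x)=0$.

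Combining the two estimates gives $m^+ - \Levynu[\Phi^+] \le (\rho-\hat{\rho})^+$ a.e. Finally I would integrate over $\R^N$: all three terms are integrable ($m^+,(\rho-\hat{\rho})^+\in L^1(\R^N)$ by hypothesis and $\Levynu[\Phi^+]\in L^1(\R^N)$ from the first step), and $\int_{\R^N}\Levynu[\Phi^+]\dd x = 0$, producing $\int_{\R^N} m^+ \dd x \le \int_{\R^N}(\rho-\hat{\rho})^+\dd x$, which is precisely the assertion.

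The step I expect to be the main (though modest) obstacle is the careful justification of the Kato inequality together with the pointwise well-definedness of $\Levynu[\Phi]$: one must ensure that the integrals $\int_{\R^N}(\Phi(x+z)-\Phi(x))\dd\nu(z)$ converge a.e.\ (guaranteed by finiteness of $\nu$ and boundedness of $\Phi$) so that multiplying by $\sgn^+(m)$ and comparing with $\Levynu[\Phi^+]$ is legitimate, and that the sign bookkeeping coming from the monotonicity of $\varphi$ is exactly right on the sets where $\Phi$ may vanish while $m$ does not.
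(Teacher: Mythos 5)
Your proof is correct and follows essentially the same route as the paper: subtract the two inequalities, multiply by $\sgn^+(w-\hat w)$, apply the Kato-type inequality $\sgn^+(w-\hat w)\,\Levynu[\varphi(w)-\varphi(\hat w)]\leq \Levynu[(\varphi(w)-\varphi(\hat w))^+]$, verify $(\varphi(w)-\varphi(\hat w))^+\in L^1(\R^N)$ via the global Lipschitz bound, and integrate using Lemma \ref{Levynuwell-def}~(b). The only difference is cosmetic: the paper cites the Kato inequality as a standard convex inequality (page 149 in \cite{Ali07}), whereas you verify it directly by the same sign bookkeeping, which is a perfectly valid and self-contained substitute.
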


\begin{proof}
Subtract the equations for $w$ and $\hat{w}$ and multiply by $\text{sign}^+(w-\hat{w})$ to get
\[
(w-\hat{w})\text{sign}^+(w-\hat{w}) \leq (\rho-\hat{\rho})\text{sign}^+(w-\hat{w})+\Levynu [\varphi(w)-\varphi(\hat{w})]\text{sign}^+(w-\hat{w})
\]
Note that $(w-\hat{w})\text{sign}^+(w-\hat{w})=(w-\hat{w})^+$, $(\rho-\hat{\rho})\text{sign}^+(w-\hat{w})\leq (\rho-\hat{\rho})^+$ and $\Levynu [\varphi(w)-\varphi(\hat{w})]\text{sign}^+(w-\hat{w})\leq\Levynu [(\varphi(w)-\varphi(\hat{w}))^+]$. The latter is an example of a standard convex inequality, see e.g. page 149 in \cite{Ali07}.   Thus,
\begin{equation*}\label{nonlinearEllipPIneq}
(w-\hat{w})^+\leq(\rho-\hat{\rho})^+ +\Levynu [(\varphi(w)-\varphi(\hat{w}))^+].
\end{equation*}
The assumption on $\varphi$ ensures that $(\varphi(w)-\varphi(\hat{w}))^+ \in L^1(\R^N)$. Indeed, for the global Lipschitz constant $L_\varphi$, and with $\Omega_+:=\{x\in\R^N: w(x)>\hat{w}(x)\}$, we have
\[
\begin{split}
&\int_{\R^N}(\varphi(w(x))-\varphi(\hat{w}(x)))^+\dd x= \int_{\Omega_+} \big(\varphi(w(x))-\varphi(\hat{w}(x))\big)\dd x\\
& \leq L_\varphi  \int_{\Omega_+}\big( w(x)-\hat{w}(x)\big)\dd x = L_\varphi  \int_{\R^N} (w(x)-\hat{w}(x))^+\dd x.
\end{split}
\]
Thus, we integrate over $\R^N$ and use Lemma \ref{Levynuwell-def} (b) to get
\begin{equation*}
\begin{split}
\int_{\R^N}(w(x)-\hat{w}(x))^+\dd x&\leq\int_{\R^N}(\rho(x)-\hat{\rho}(x))^+\dd x + \int_{\R^N}\Levynu [(\varphi(w)-\varphi(\hat{w}))^+](x)\dd x\\
&=\int_{\R^N}(\rho(x)-\hat{\rho}(x))^+\dd x
\end{split}
\end{equation*}
which completes the proof.
\end{proof}

Here are some standard consequences of the contraction result.

\begin{corollary}[A priori estimates]\label{propEllipP}
Assume \eqref{nuas}, $\varphi:\R\to\R$ is nondecreasing and globally Lipschitz, and $w,\hat{w}, \rho,\hat{\rho}\in L^1(\R^N)$. If $w, \hat{w}$
are respective a.e. sub- and supersolutions of \eqref{EllipP} with right-hand sides $\rho, \hat{\rho}$, then
\begin{enumerate}[{\rm (a)}]
\item  \textup{($L^1$-contraction)}
$\int_{\R^N}\big(w(x)-\hat{w}(x)\big)^+\dd x \leq\int_{\R^N}\big(\rho(x)-\hat{\rho}(x)\big)^+\dd x$,
\item \textup{(Comparison)} if $\rho\leq \hat{\rho}$ a.e., then $w\leq \hat{w}$ a.e., and
\item \textup{($L^1$-bound)} $\|w\|_{L^1(\R^N)}\leq \|\rho\|_{L^1(\R^N)}$.
\end{enumerate}
\end{corollary}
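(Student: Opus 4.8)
The plan is to deduce all three statements directly from the contraction principle of Lemma \ref{TContEllip}. The hypotheses of the corollary are stronger than those of the lemma: since $w,\hat{w},\rho,\hat{\rho}\in L^1(\R^N)$, we automatically have $(w-\hat{w})^+\in L^1(\R^N)$ and $(\rho-\hat{\rho})^+\in L^1(\R^N)$, which are exactly the integrability assumptions required in Lemma \ref{TContEllip}. Consequently, part (a) is nothing but a restatement of that lemma and requires no additional work.

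For part (b), I would note that $\rho\leq\hat{\rho}$ a.e. gives $(\rho-\hat{\rho})^+=0$ a.e., so the right-hand side of the inequality in (a) vanishes and $\int_{\R^N}(w-\hat{w})^+\,\dd x\leq0$. As the integrand is nonnegative, this forces $(w-\hat{w})^+=0$ a.e., that is, $w\leq\hat{w}$ a.e.

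For part (c), the key observation is that the zero function solves \eqref{EllipP} with zero right-hand side, because $\Levynu$ annihilates constants: by definition $\Levynu[\varphi(0)]=\int_{\R^N}\big(\varphi(0)-\varphi(0)\big)\,\dd\nu=0$, so $0-\Levynu[\varphi(0)]=0$. Since any solution is simultaneously a sub- and a supersolution, I would apply (a) twice. Taking the supersolution to be $\hat{w}\equiv0$ with $\hat{\rho}\equiv0$ gives $\int_{\R^N}w^+\,\dd x\leq\int_{\R^N}\rho^+\,\dd x$; interchanging the roles, i.e. taking the subsolution to be $0$ (with right-hand side $0$) and the supersolution to be $w$ (with right-hand side $\rho$), gives $\int_{\R^N}w^-\,\dd x\leq\int_{\R^N}\rho^-\,\dd x$. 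Adding these and using $\|f\|_{L^1(\R^N)}=\int_{\R^N}f^+\,\dd x+\int_{\R^N}f^-\,\dd x$ yields the bound. The only point needing a little care — and the closest thing to an obstacle — is recognising that a solution may play either role in the contraction inequality, which is precisely what lets us control both $w^+$ and $w^-$; there is no substantive analytic difficulty here.
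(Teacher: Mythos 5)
Your proof is correct and follows exactly the route the paper intends: the paper states Corollary \ref{propEllipP} without proof as ``standard consequences of the contraction result,'' and your deductions --- (a) as a direct instance of Lemma \ref{TContEllip}, (b) from the vanishing right-hand side in (a), and (c) by comparing a solution $w$ in both roles against the trivial solution $\hat w\equiv 0$, $\hat\rho\equiv 0$ (valid since $\Levynu$ annihilates constants under \eqref{nuas}) --- are precisely that standard argument.
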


\begin{lemma}[A priori estimate, $L^\infty$-bound]\label{propEllipPbounded}
Assume \eqref{nuas}, \eqref{philipas}, and $w,\rho\in L^\infty(\R^N)$. If  $w$ solves \eqref{EllipP} a.e. with right-hand side $\rho$ respectively, then
$$
\|w\|_{L^\infty(\R^N)}\leq\|\rho\|_{L^\infty(\R^N)}.
$$
\end{lemma}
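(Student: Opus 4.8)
The plan is to prove the two-sided bound $-\|\rho\|_{L^\infty}\le w\le\|\rho\|_{L^\infty}$ a.e.\ by a direct pointwise argument. Write $M:=\|\rho\|_{L^\infty(\R^N)}$. The natural barriers are the constants $\pm M$, which are a super- and a subsolution of \eqref{EllipP} because $\Levynu$ annihilates constants; however, constants are not in $L^1(\R^N)$, so neither the contraction Lemma \ref{TContEllip} nor the comparison in Corollary \ref{propEllipP} applies directly. I would therefore argue by hand, working with essential suprema. By symmetry it suffices to prove $\esssup_{\R^N} w\le M$: the lower bound follows by applying the same argument to $\tilde w:=-w$, which solves \eqref{EllipP} with right-hand side $-\rho$ and the nondecreasing nonlinearity $\tilde\varphi(s):=-\varphi(-s)$.

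First I would rewrite the equation pointwise. Since $\nu$ is finite by \eqref{nuas}, the operator carries no gradient correction and $\Levynu[\varphi(w)](x)=\int_{\R^N}(\varphi(w(x+z))-\varphi(w(x)))\dd\nu(z)$. Subtracting $M$ from \eqref{EllipP} and using $\Levynu[\varphi(M)]=0$, I get for a.e.\ $x$
\begin{equation*}
w(x)-M=(\rho(x)-M)+\int_{\R^N}[\varphi(w(x+z))-\varphi(M)]\dd\nu(z)-[\varphi(w(x))-\varphi(M)]\,\nu(\R^N).
\end{equation*}
On $\{w>M\}$ one has $\rho-M\le0$ and, since $\varphi$ is nondecreasing, $\varphi(w)-\varphi(M)=[\varphi(w)-\varphi(M)]^+\ge0$; bounding $\varphi(w(x+z))-\varphi(M)\le[\varphi(w(x+z))-\varphi(M)]^+$ then yields, for a.e.\ $x$ (the inequality being trivial on $\{w\le M\}$),
\begin{equation*}
(w(x)-M)^++[\varphi(w(x))-\varphi(M)]^+\,\nu(\R^N)\le\int_{\R^N}[\varphi(w(x+z))-\varphi(M)]^+\dd\nu(z).
\end{equation*}
This is the same nonlinear pointwise inequality that underlies Lemma \ref{TContEllip}, but here I would keep it pointwise rather than integrate it.

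Finally I would pass to the essential supremum. Set $S:=\esssup_{\R^N}w$ and suppose, for contradiction, that $S>M$. Since translations preserve Lebesgue-null sets, Fubini gives $\varphi(w(x+z))\le\varphi(S)$ for a.e.\ $x$ and $\nu$-a.e.\ $z$, so the right-hand side above is bounded by $(\varphi(S)-\varphi(M))\,\nu(\R^N)$. Choosing a maximizing sequence $x_n$ in the full-measure set where the inequality holds, with $w(x_n)\to S$, and using continuity of $\varphi$ (guaranteed by \eqref{philipas}), I let $n\to\infty$ to obtain
\begin{equation*}
(S-M)+(\varphi(S)-\varphi(M))\,\nu(\R^N)\le(\varphi(S)-\varphi(M))\,\nu(\R^N),
\end{equation*}
hence $S-M\le0$, contradicting $S>M$. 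Thus $w\le M$ a.e., and the lower bound follows symmetrically.

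The main obstacle is precisely the failure of the constant barriers to be integrable, which rules out the $L^1$-contraction machinery; the essential-supremum argument circumvents this, at the cost of a little measure-theoretic care (translation invariance of null sets in the Fubini step, and the passage to the limit along a maximizing sequence, where continuity of $\varphi$ is exactly what is needed).
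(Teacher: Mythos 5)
Your proof is correct and is in essence the paper's own argument: both are pointwise ``almost maximum principle'' proofs that evaluate the equation at points where $w$ is arbitrarily close to $\esssup_{\R^N} w$, bound the nonlocal term there by $\big(\varphi(\esssup w)-\varphi(w(\cdot))\big)\,\nu(\R^N)$ using monotonicity of $\varphi$, finiteness of $\nu$, and the fact that $w(x+z)\le \esssup w$ for $\nu$-a.e.\ $z$ and a.e.\ $x$ (which the paper leaves implicit and you justify via Fubini), and then conclude by letting the near-maximum parameter vanish. The only substantive difference is the final limiting step: the paper uses \eqref{philipas} quantitatively (choosing $x_\delta$ with $w(x_\delta)+\delta>\esssup w$, bounding $\varphi(\esssup w)-\varphi(w(x_\delta))<L_{\varphi}\delta$, and sending $\delta\to0$), whereas your maximizing-sequence argument invokes only continuity of $\varphi$, so your proof in fact establishes the lemma under \eqref{phias} alone.
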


\begin{proof}
Since $w\in L^\infty(\R^N)$, for every $\delta>0$, there exists
$x_\delta\in \R^N$ such that
$$
w(x_\delta)+\delta>\esssup_{x\in\R^N}\{w(x)\}.
$$
I.e. $|\esssup w-w(x_\delta)|
<\delta$, and then by \eqref{philipas},
\begin{equation*}
\begin{split}
  \varphi\big(\esssup_{x\in\R^N}\{w(x)\}\big)-\varphi\big(w(x_\delta)\big)
 &\leq L_\varphi\big|\esssup_{x\in\R^N}\{w(x)\}-w(x_\delta)\big|<L_\varphi\delta.
\end{split}
\end{equation*}
Combining the above and \eqref{philipas}  and \eqref{nuas}, we get
\begin{equation*}
\begin{split}
&\esssup_{x\in\R^N}\{w(x)\}-\delta-\rho(x_\delta)<w(x_{\delta})-\rho(x_\delta)=\Levy^\nu[\varphi(w(\cdot))](x_\delta)\\
&\leq \int_{\R^N}\Big(\varphi\big(\esssup_{x\in\R^N}\{w(x)\}\big)-\varphi(w(x_\delta))\Big)\dd \nu(z)<L_\varphi\delta\nu(\R^N),
\end{split}
\end{equation*}
and hence,
$$
\esssup_{x\in\R^N}\{w(x)\}
<\|\rho^+\|_{L^\infty(\R^N)}+\delta(1+L_\varphi\nu(\R^N)).
$$
We may send $\delta$ to zero to get
$$
\|w^+\|_{L^\infty(\R^N)}=(\esssup_{x\in\R^N}\{w(x)\})^+\leq\|\rho^+\|_{L^\infty(\R^N)}.
$$
In a similar way $\esssup_{x\in\R^N}\{-w(x)\}\leq\|\rho^-\|_{L^\infty(\R^N)}$, and the result follows.
\end{proof}

Under stronger assumptions on $\varphi$ we now establish an existence
result for \eqref{EllipP} in $L^1\cap L^\infty$. By this result and an
approximation argument, we get the general existence result
which holds under assumption \eqref{phias}. As a consequence of the
approximation argument, the general problem will also inherit the a priori estimates
in Corollary \ref{propEllipP} and Lemma \ref{propEllipPbounded}.

\begin{proposition}\label{firstex}
Assume \eqref{nuas}, 
$\rho \in L^1(\R^N)$, and
 \begin{equation}\label{strictassumptionphi}
 \varphi \in C^1(\R)\text{ such that }\frac{1}{c}\leq \varphi'(s)\leq c\text{ for all $s\in \R$ and some $c>1$.}
 \end{equation}
Then there exists a unique $w\in L^1(\R^N)$ satisfying \eqref{EllipP} a.e. Moreover, if in addition $\rho\in L^\infty(\R^N)$, then $w$ is also in $L^\infty(\R^N)$.
\end{proposition}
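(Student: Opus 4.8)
The plan is to reduce the nonlinear problem to a fixed-point equation for a genuine contraction, exploiting that \eqref{strictassumptionphi} makes $\varphi$ a bi-Lipschitz diffeomorphism of $\R$. First I would normalize $\varphi(0)=0$ using Remark \ref{addingconstants}(a); this is legitimate here because $\Levynu$ annihilates constants, so replacing $\varphi$ by $\varphi-\varphi(0)$ leaves \eqref{EllipP} unchanged. Writing $\beta:=\varphi^{-1}$, the inverse function theorem together with \eqref{strictassumptionphi} gives $\beta\in C^1$ with $\tfrac1c\le \beta'\le c$ and $\beta(0)=0$, so $\beta$ is globally Lipschitz.

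The key step is the substitution $v:=\varphi(w)$ (equivalently $w=\beta(v)$). Since $\nu$ is finite, set $m:=\nu(\R^N)$ and split $\Levynu[v](x)=\mathcal K v(x)-m\,v(x)$, where $\mathcal K v(x):=\int_{\R^N} v(x+z)\dd\nu(z)$ is a bounded operator on $L^1(\R^N)$ with $\|\mathcal K v\|_{L^1}\le m\|v\|_{L^1}$ (by Tonelli and translation invariance). Then \eqref{EllipP} becomes $g(v)=\rho+\mathcal K v$, where $g(t):=\beta(t)+mt$. The map $g$ is a strictly increasing $C^1$-bijection of $\R$ with $g'\ge \tfrac1c+m$, so $g^{-1}$ is globally Lipschitz with constant $\tfrac{c}{1+mc}$ and $g^{-1}(0)=0$. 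Hence the equation is equivalent to the fixed-point problem $v=S[v]$, with $S[v]:=g^{-1}(\rho+\mathcal K v)$. Because $g^{-1}(0)=0$, the Nemytskii map $S$ sends $L^1(\R^N)$ into itself, and
\[
\|S[v_1]-S[v_2]\|_{L^1}\le \tfrac{c}{1+mc}\,\|\mathcal K v_1-\mathcal K v_2\|_{L^1}\le \tfrac{mc}{1+mc}\,\|v_1-v_2\|_{L^1}.
\]
Since $\tfrac{mc}{1+mc}<1$, the operator $S$ is a contraction, and the Banach fixed-point theorem yields a unique $v\in L^1(\R^N)$. Setting $w:=\beta(v)\in L^1(\R^N)$ then gives an a.e.\ solution of \eqref{EllipP}. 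Uniqueness in $L^1$ is immediate from the $L^1$-contraction of Corollary \ref{propEllipP}(a), which applies since $\varphi$ is globally Lipschitz.

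For the $L^\infty$ claim, I would run the same fixed-point argument on the Banach space $L^1(\R^N)\cap L^\infty(\R^N)$: when $\rho\in L^\infty$ as well, the estimate $\|\mathcal K v\|_{L^\infty}\le m\|v\|_{L^\infty}$ together with the Lipschitz and annihilation properties of $g^{-1}$ shows that $S$ also contracts in the $L^\infty$ norm with the same ratio $\tfrac{mc}{1+mc}$, hence contracts on $L^1\cap L^\infty$. Its fixed point there must coincide with the $L^1$ fixed point by uniqueness, so $w=\beta(v)\in L^\infty(\R^N)$. The sharp bound $\|w\|_{L^\infty}\le\|\rho\|_{L^\infty}$ then follows directly from Lemma \ref{propEllipPbounded}. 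The only real subtlety, and the step I would flag as the crux, is the design of the contraction: a naive iteration on $w\mapsto \rho+\Levynu[\varphi(w)]$ has Lipschitz constant $\sim 2c\,\nu(\R^N)$ and contracts only under an artificial smallness assumption on $\nu(\R^N)$. Performing the substitution $v=\varphi(w)$ and isolating the diagonal term $+m\,v$ from $\mathcal K$ is precisely what turns the lower ellipticity bound $\varphi'\ge 1/c$ of \eqref{strictassumptionphi} into the strict inequality $\tfrac{mc}{1+mc}<1$, making the scheme contractive for \emph{every} finite $\nu(\R^N)$. Everything else (measurability and mapping properties of $\mathcal K$ and of $g^{-1}$, and the equivalence of the reformulations) is routine.
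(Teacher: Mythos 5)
Your proof is correct. For the existence part it is, up to a change of unknown, the same fixed-point argument as the paper's: the paper iterates in the variable $\W=\Phi(w):=w+\nu(\R^N)\varphi(w)$ with the map $\mathbf{M}[\W]=\int_{\R^N}\varphi\big(\Phi^{-1}(\W(\cdot+z))\big)\dd\nu(z)+\rho$, and since your $g(v)=\varphi^{-1}(v)+\nu(\R^N)v$ satisfies $g(\varphi(w))=\Phi(w)$, your map is exactly the conjugate $S=g^{-1}\circ\mathbf{M}\circ g$, with the identical contraction factor $\nu(\R^N)c/(1+\nu(\R^N)c)$. The difference is the bookkeeping: the paper bounds the Lipschitz constant of $\varphi\circ\Phi^{-1}$ (which \emph{is} your $g^{-1}$) via the same-sign identity \eqref{L1relation} together with the lower bound on $(\Phi^{-1})'$, whereas you simply differentiate $g=\varphi^{-1}+\nu(\R^N)\,\mathrm{id}$; your version is the more elementary and avoids pointwise sign considerations. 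Where you genuinely diverge is the $L^\infty$ claim. The paper proves it by comparison: writing $W=\varphi(w)$, it brackets $W$ between solutions $Q,R$ of the problems with data $\rho^{\pm}$, compares these (via Lemma \ref{TContEllip}) with solutions $q,r$ of the linear equations $\frac1c q-\Levynu[q]=\rho^+$ and $\frac1c r-\Levynu[r]=\rho^-$, and imports existence and $L^\infty$-bounds for $q,r$ from Theorem 3.1 of \cite{DTEnJa17a}, ending with $\|w\|_{L^\infty(\R^N)}\le c^2\|\rho\|_{L^\infty(\R^N)}$. You instead rerun Banach's theorem on $L^1(\R^N)\cap L^\infty(\R^N)$, where $S$ contracts in both norms with the same ratio, and identify that fixed point with the $L^1$ one by uniqueness; this is shorter and self-contained (no appeal to \cite{DTEnJa17a}), and the sharp bound $\|w\|_{L^\infty(\R^N)}\le\|\rho\|_{L^\infty(\R^N)}$ is recovered a posteriori from Lemma \ref{propEllipPbounded}, exactly as the paper itself does later in the proof of Theorem \ref{maincor}; what the paper's longer route buys is the explicit ordering $r\le R\le\varphi(w)\le Q\le q$, which is not needed for the proposition. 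The only step you wave through --- a.e.\ well-definedness and measurability of $x\mapsto\int_{\R^N}v(x+z)\dd\nu(z)$ for $v\in L^1(\R^N)$ --- is the content of Lemma \ref{Levynuwell-def} and the Borel-representative remark in the paper, so it is indeed routine.
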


\begin{remark}\label{rem:firstex}
Let $\rho\in L^1(\R^N)\cap L^\infty(\R^N)$. By Lemma \ref{propEllipPbounded}, we can, a posteriori, obtain the above existence and uniqueness result for the less restrictive assumption
\begin{equation*}
 \varphi \in C^1(\R)\text{ such that }\frac{1}{c}\leq \varphi'(s)\leq c\text{ for all $s\in K$ and some $c>1$,}
 \end{equation*}
where $K\subset \R$ is the compact set $\{\xi\in\R \,:\, |\xi|\leq\|\rho\|_{L^\infty(\R^N)}\}$.
\end{remark}

\begin{proof}
By \eqref{nuas}, equation \eqref{EllipP} can be written in an expanded way as follows:
\begin{equation}\label{eq:existbis}
w(x)+\nu(\R^N) \varphi(w(x))=\int_{\R^N}\varphi(w(x+z))\dd\nu(z)+\rho(x).
\end{equation}
Define 
$$\W(x):=\Phi(w(x)):= w(x)+\nu(\R^N) \varphi(w(x)),$$
and note that by assumptions, $\Phi(0)=0+\nu(\R^N) \varphi(0)=0$, $\Phi \in
C^1(\R)$ is invertible, 
\(
1+ \nu(\R^N)\frac{1}{c}\leq \Phi'\leq1+\nu(\R^N)c
\), and the inverse $\Phi^{-1}\in C^1(\R)$ satisfies
\begin{equation}\label{Phiinvbdd}
\frac{1}{1+\nu(\R^N)c} \leq (\Phi^{-1})'(s)\leq \frac{1}{1+\nu(\R^N)\frac{1}{c}}\qquad\text{for all}\qquad s\in \R.
\end{equation}
Since $\psi_1:=w-\hat{w}$ and $\psi_2:=\varphi(w)-\varphi(\hat{w})$
have the same sign, $|\psi_1+\psi_2|=|\psi_1|+|\psi_2|$, and thus
\begin{equation}\label{L1relation}
\begin{split}
  \|\W-\hat{\W}\|_{L^1(\R^N)}
&=\|w-\hat{w}\|_{L^1(\R^N)}+ \nu(\R^N)\|\varphi(w)-\varphi(\hat{w})\|_{L^1(\R^N)}.
\end{split}
\end{equation}

With all the mentioned properties of $\Phi$ we are allowed to write equation \eqref{eq:existbis} in terms of $\W$ and $\Phi$ in the following way: 
\begin{equation}\label{EPV}
\W(x)=\int_{\R^N}\varphi\left(\Phi^{-1}\left(\W(x+z)\right)\right)\dd\nu(z)+\rho(x).
\end{equation}
To conclude, we will prove that the map defined by
\begin{equation*}
\W\mapsto \mathbf{M}[\W]:=\int_{\R^N}\varphi\left(\Phi^{-1}\left(\W(x+z)\right)\right)\dd\nu(z)+\rho(x),
\end{equation*}
is a contraction in $L^1(\R^N)$. In this way, Banach's fixed point
theorem will ensure the existence of a unique solution $\W\in
L^1(\R^N)$ of \eqref{EPV}, and thus, the existence of a unique
solution $w\in L^1(\R^N)$ of \eqref{EllipP} by the invertibility of
$\Phi$. Indeed, using first the definition of $\Phi$ and \eqref{L1relation}
and then \eqref{Phiinvbdd}, we have

\begin{equation*}
\begin{split}
\|\mathbf{M}[\W]-\mathbf{M}[\hat{\W}]\|_{L^1(\R^N)}&\leq \nu(\R^N) \| \varphi(\Phi^{-1}(\W))-\varphi(\Phi^{-1}(\hat{\W}))\|_{L^1(\R^N)}\\
&=\|\W-\hat{\W}\|_{L^1(\R^N)}-\|\Phi^{-1}(\W)-\Phi^{-1}(\hat{\W})\|_{L^1(\R^N)}\\
&\leq\|\W-\hat{\W}\|_{L^1(\R^N)}-\min_{s\in \R}|(\Phi^{-1})'(s)|\|\W-\hat{\W}\|_{L^1(\R^N)}\\
&=\left(1-\frac{1}{1+\nu(\R^N) c}\right)\|\W-\hat{\W}\|_{L^1(\R^N)}.\\
\end{split}
\end{equation*}

Let us now consider the case when $\rho\in L^1(\R^N)\cap L^\infty(\R^N)$. By \eqref{strictassumptionphi}, there exists a unique $\varphi^{-1}$ such that
\begin{equation}\label{varphiinversederivativebounded}
\frac{1}{c}\leq (\varphi^{-1})'(s)\leq c\qquad\text{for all}\qquad s\in\R.
\end{equation}
Now, define $W(x):=\varphi(w(x))$ which is (only) in $L^1(\R^N)$ since $w$ is, and it solves
$$
\varphi^{-1}(W(x))-\Levy^{\nu}[W](x)=\rho(x)\qquad\text{a.e.}\qquad x\in\R^N.
$$
Note that \eqref{varphiinversederivativebounded} means that
$$
\frac{1}{c}s\leq \varphi^{-1}(s)\leq cs\qquad\text{for all}\qquad s\geq0,
$$
and
$$
cs\leq \varphi^{-1}(s)\leq \frac{1}{c}s\qquad\text{for all}\qquad s\leq0.
$$
Therefore we also consider $Q,R\in L^1(\R^N)$ solving
$$
\varphi^{-1}(Q(x))-\Levy^{\nu}[Q](x)=\rho^+(x)\qquad\text{and}\qquad \varphi^{-1}(R(x))-\Levy^{\nu}[R](x)=\rho^-(x).
$$
By Corollary \ref{propEllipP} (b), we immediately have that $Q\geq0$, $R\leq0$, and $R\leq W\leq Q$. Under these considerations,
$$
\frac{1}{c}Q(x)-\Levy^{\nu}[Q](x)\leq \rho^+(x)\qquad\text{and}\qquad \frac{1}{c}R(x)-\Levy^{\nu}[R](x)\geq\rho^-(x).
$$
By Theorem 3.1 (b) and (c) in \cite{DTEnJa17a}, there exist unique
a.e.-solutions $q,r\in L^1(\R^N)\cap L^\infty(\R^N)$ of 
\begin{equation*}
\frac{1}{c}q(x)-\Levy^{\nu}[q](x)= \rho^+(x)
\qquad\text{and}\qquad
\frac{1}{c}r(x)-\Levy^{\nu}[r](x)= \rho^-(x)
\end{equation*}
which satisfy
$$
\|q\|_{L^\infty(\R^N)}\leq c\|\rho^+\|_{L^\infty(\R^N)}\leq c\|\rho\|_{L^\infty(\R^N)}
$$
and
$$
\|r\|_{L^\infty(\R^N)}\leq c\|\rho^-\|_{L^\infty(\R^N)}\leq c\|\rho\|_{L^\infty(\R^N)}.
$$
Lemma \ref{TContEllip} then gives $Q\leq q$ and $r\leq R$.
These estimates and the definition of $W$ yield
$$
-c\|\rho\|_{L^\infty(\R^N)}\leq r\leq R\leq \varphi(w(x))\leq Q\leq q\leq c\|\rho\|_{L^\infty(\R^N)}.
$$
Finally, by \eqref{varphiinversederivativebounded}, we then get
$$
-c^2\|\rho\|_{L^\infty(\R^N)}\leq w\leq c^2\|\rho\|_{L^\infty(\R^N)}.
$$
The proof is complete.
%
\end{proof}

\begin{proof}[Proof of Theorem \ref{maincor}]
The proof is divided into four steps.

\medskip
\noindent{\bf 1)} {\it Approximate problem.} For
$\delta>0$, let $\omega_\delta$ be a standard mollifier and define  
\[
\varphi_\delta(\zeta):=
(\varphi*\omega_\delta)(\zeta)-(\varphi*\omega_\delta)(0)+ \delta \zeta.
\]
The properties of mollifiers give $\varphi_\delta\in C^\infty(\R)$, and hence, it is locally Lipschitz. 
Moreover,
$\varphi_\delta'\geq\delta>0$ and $\varphi_\delta(0)=0$. Then there exists a constant $c>1$
such that, for every compact set $K\subset\R$,
$ \frac{1}{c}\leq\varphi_\delta'(s)\leq c$ for all $s\in K$. 
By Proposition \ref{firstex} and Remark \ref{rem:firstex}, there exists a unique a.e.-solution $w_\delta\in L^1(\R^N)\cap L^\infty(\R^N)$ of 
\begin{equation}\label{ellipap}
w_\delta(x)- \Levynu[\varphi_\delta(w_\delta)](x)=\rho(x) \qquad\text{for all}\qquad x\in \R^N,
\end{equation} 
and moreover, by Corollary \ref{propEllipP} (c) and Lemma \ref{propEllipPbounded},
\begin{equation}\label{propapproxsoln}
\|w_\delta\|_{L^1(\R^N)}\leq\|\rho\|_{L^1(\R^N)}\qquad\text{and}\qquad\|w_\delta\|_{L^\infty(\R^N)}\leq\|\rho\|_{L^\infty(\R^N)}
\end{equation}

\medskip
\noindent{\bf 2)} {\it $L^1_{\textup{loc}}$-converging subsequence with limit $w$.} Let $K\subset\R^N$ be compact and
  $w_\delta^K(x):=w_\delta(x)\mathbf{1}_K(x)$ for any $\delta>0$. We then
apply Kolmogorov-Riesz's compactness theorem (cf. e.g. Theorem A.5 in
\cite{HoRi02}). First, by
\eqref{propapproxsoln},
$\|w_\delta^K\|_{L^1(\R^N)}\leq\|w_\delta\|_{L^1(\R^N)}\leq\|\rho\|_{L^1(\R^N)}$. Second,
note that $w_\delta(\cdot +\xi)$ is a solution of \eqref{ellipap} with
right-hand side $\rho(\cdot+\xi)$, and then, by  Corollary
\ref{propEllipP} (a) and \eqref{propapproxsoln} again and since
translations are continuous in $L^1(\R^N)$,
\begin{equation*}
\begin{split}
&\|w_\delta^K(\cdot+\xi)-w_\delta^K\|_{L^1(\R^N)}\\
&\leq \|(w_\delta(\cdot+\xi)-w_\delta)\mathbf{1}_K(\cdot+\xi)\|_{L^1(\R^N)}+\|w_\delta(\mathbf{1}_K(\cdot+\xi)-\mathbf{1}_K)\|_{L^1(\R^N)}\\
&\leq\|\rho(\cdot+\xi)-\rho\|_{L^1(\R^N)}+\|\rho\|_{L^\infty(\R^N)}\|\mathbf{1}_K(\cdot+\xi)-\mathbf{1}_K\|_{L^1(\R^N)}\to0\quad\text{as}\quad|\xi|\to0.
\end{split}
\end{equation*}
Hence, there exists $w\in L^1(K)$ and a subsequence
$\delta_n\to 0^+$ such that $w_{\delta_n} \to w$  in $L^1(K)$ as $n\to \infty$. A covering and diagonal argument then allow us to pick a further subsequence such that the convergence is in $L_\textup{loc}^1(\R^N)$, and hence, also pointwise a.e. Then, $w\in L^1(\R^N)\cap L^\infty(\R^N)$ since the estimates
\begin{equation}\label{proplimitsoln}
\|w\|_{L^1(\R^N)}\leq\|\rho\|_{L^1(\R^N)}\qquad\text{and}\qquad\|w\|_{L^\infty(\R^N)}\leq\|\rho\|_{L^\infty(\R^N)}
\end{equation}
hold by taking the a.e. limit using Fatou's lemma and the inequality $|w|\leq |w-w_{\delta_n}|+|w_{\delta_n}|$ respectiely in \eqref{propapproxsoln}.

\medskip
\noindent {\bf 3)} {\it The limit $w$ solves
    \eqref{EllipP} a.e.} 
Note that ($\varphi(0)=0$)
\begin{equation*}
  |\varphi_\delta(\zeta)-\varphi(\zeta)|
\leq |\varphi*\omega_\delta-\varphi
  |(\zeta)+|\varphi*\omega_\delta-\varphi|(0)+ \delta |\zeta|, 
\end{equation*}
which implies that $\varphi_\delta\to \varphi$ as $\delta \to 0^+$
locally uniformly by \eqref{phias} and properties of mollifiers.
Then by a.e.-convergence of
$w_{\delta_n}$, continuity of $\varphi$, and $\|w_{\delta_n}\|_{L^\infty}\leq \|f\|_{L^\infty}$, 
\begin{equation*}
\begin{split}
  |\varphi_{\delta_n}(w_{\delta_n})- \varphi(w)|
 &\leq
  \sup_{|\zeta|\leq\|\rho\|_{L^\infty}}|\varphi_{\delta_n}(\zeta)-
  \varphi(\zeta)|+| \varphi(w_{\delta_n})-\varphi(w)|\to 0
\end{split}
\end{equation*}
pointwise a.e. as $n\to\infty$.
Moreover, $|\varphi_{\delta_n}(w_{\delta_n})|\leq
|\varphi_{\delta_n}(w_{\delta_n})-\varphi(w_{\delta_n})|+|\varphi(w_{\delta_n})|$,
so for $n$ sufficiently large,
\begin{equation*}
\|\varphi_{\delta_n}(w_{\delta_n})\|_{L^\infty(\R^N)}\leq\sup_{|\zeta|\leq\|\rho\|_{L^\infty}}|\varphi(\zeta)|+1.
\end{equation*}
Then by the dominated convergence theorem and \eqref{nuas}, $\Levynu
[\varphi_{\delta_n}(w_{\delta_n})]\to \Levynu [\varphi(w)]$ pointwise
a.e. as $n\to\infty$. Hence we may pass to the limit in \eqref{ellipap} to
see that $w$ is an a.e.-solution of \eqref{EllipP}.

\medskip
\noindent{\bf 4)} {\it Uniqueness.}  
By the assumptions and \eqref{proplimitsoln}, $w,\rho\in L^1(\R^N)\cap
L^\infty(\R^N)$ and hence $w-\rho\in L^1(\R^N)$. Next we multiply equation
\eqref{EllipP}, satisfied a.e. by $w$, by 
a test function and integrate. Since $\Levynu$ is self-adjoint ($\nu$
is symmetric),
$$
\int_{\R^N}\Big(w\psi-\varphi(w)\Levynu[\psi]-\rho\psi\Big)\dd x =0\qquad\text{for all}\qquad \psi\in C_\textup{c}^\infty(\R^N).
$$
Hence, $w$ is a distributional solution of \eqref{EllipP}. By Theorem
\ref{NAuniquemuEP} it is then unique.
\end{proof}

\begin{proof}[Proof of Theorem \ref{propOpT:proplimsolnofEllipP} with $T=\Ti$]
By the proof of Theorem \ref{maincor}, we know that a.e.-solutions $w_\delta, \hat{w}_{\delta}$ of \eqref{ellipap} with respective right-hand sides $\rho,\hat{\rho}$ satisfy Corollary \ref{propEllipP} and Lemma \ref{propEllipPbounded}, and they converge a.e. to $w,\hat{w}$ which are solutions of \eqref{EllipP} with respective right-hand sides $\rho,\hat{\rho}$. Thus, we inherit (b) and (c) by Fatou's lemma, by the inequality $|w|\leq |w-w_{\delta}|+|w_{\delta}|$ and the a.e.-convergence we obtain (d), and (a) can be deduced from the $L^1$-contraction.
\end{proof}

\begin{remark}
By stability and compactness results for \eqref{EllipP}, we can get existence and a priori estimates for the full elliptic problem \eqref{NAgenEllipP}.
\end{remark}

\section*{Acknowledgements}

The authors were supported by the Toppforsk (research excellence)
project Waves and Nonlinear Phenomena (WaNP), grant no. 250070 from
the Research Council of Norway. F.~del Teso was also supported by the
BERC 2014--2017 program from the Basque Government, BCAM Severo Ochoa
excellence accreditation SEV-2013-0323 from Spanish Ministry of
Economy and Competitiveness (MINECO), and the ERCIM  ``Alain
Benoussan'' Fellowship programme. We would like to
thank the referees for many good questions, remarks, and suggestions,
which have helped us improve the paper.










\end{document}